\numberwithin{equation}{section}
\newcommand {\R} {\mathbb{R}}
\numberwithin{equation}{section}
\theoremstyle{plain}
\newtheorem{theorem}{Theorem}[section]
\newtheorem{lemma}[theorem]{Lemma}
\newtheorem{proposition}[theorem]{Proposition}
\newtheorem{corollary}[theorem]{Corollary}
\theoremstyle{remark}
\newtheorem{remark}[theorem]{Remark}
\newtheorem{definition}[theorem]{Definition}
\newtheorem{assumption}[theorem]{Assumption}
\newtheorem{question}[theorem]{Question}
\newtheorem{conjecture}[theorem]{Conjecture}
\mathchardef\mhyphen="2D
\begin{document}

\title[Fluctuations of the number of excursion sets ]{Fluctuations of the number of excursion sets  of planar Gaussian fields } 
\author{Dmitry Beliaev\textsuperscript{1} }
\address{\textsuperscript{1}Mathematical Institute, University of Oxford}
\email{belyaev@maths.ox.ac.uk}

\author{Michael McAuley\textsuperscript{1,2} }
%\email{mcauley@maths.ox.ac.uk}
\address{\textsuperscript{2}Present address:  Department of Mathematics and Statistics, University of Helsinki.} 
\email{michael.mcauley@helsinki.fi}

\author{Stephen Muirhead \textsuperscript{3,4}}
\address{ \textsuperscript{3}School of Mathematical Sciences, Queen Mary University of London}
\address{\textsuperscript{4}Present address: School of Mathematics and Statistics, University of Melbourne}

\email{smui@unimelb.edu.au}

\subjclass[2010]{60G60,60G15, 58K05 }
\keywords{ Gaussian fields, level sets, excursion sets,nodal sets, fluctuations, variance bounds} 
		
%		\begin{aug}
%			\author[A]{\fnms{Dmitry} \snm{Beliaev}\ead[label=e1]{belyaev@maths.ox.ac.uk}},
%			\author[B]{\fnms{Michael} \snm{McAuley}\ead[label=e2]{michael.mcauley@helsinki.fi}}
%			\and
%			\author[C]{\fnms{Stephen} \snm{Muirhead}\ead[label=e3]{smui@unimelb.edu.au}}
			%%%%%%%%%%%%%%%%%%%%%%%%%%%%%%%%%%%%%%%%%%%%%%
			%% Addresses                                %%
			%%%%%%%%%%%%%%%%%%%%%%%%%%%%%%%%%%%%%%%%%%%%%%
%			\address[A]{Mathematical Institute,	University of Oxford,  \printead{e1}}
%			\address[B]{Mathematical Institute,	University of Oxford,\\
%			(Present address: Department of Mathematics and Statistics, University of Helsinki), \printead{e2}}
%			\address[C]{School of Mathematical Sciences, Queen Mary University of London, \\ (Present address: School of Mathematics and Statistics, Univeristy of Melbourne), \printead{e3}}
%		\end{aug}

		\begin{abstract}
For a smooth, stationary, planar Gaussian field, we consider the number of connected components of its excursion set (or level set) contained in a large square of area $R^2$. The mean number of components is known to be of order $R^2$ for generic fields and all levels. We show that for certain fields with positive spectral density near the origin (including the Bargmann-Fock field), and for certain levels $\ell$, these random variables have fluctuations of order at least $R$, and hence variance of order at least $R^2$. In particular this holds for excursion sets when $\ell$ is in some neighbourhood of zero, and it holds for excursion/level sets when $\ell$ is sufficiently large. We prove stronger fluctuation lower bounds of order $R^\alpha$, $\alpha \in [1,2]$, in the case that the spectral density has a singularity at the origin. Finally we show that the number of excursion/level sets for the Random Plane Wave at certain levels has fluctuations of order at least $R^{3/2}$, and hence variance of order at least~$R^3$. We expect that these bounds are of the correct order, at least for generic levels.
		\end{abstract}
		
		\maketitle

	\section{Introduction}\label{s:introduction}
	Let $f:\R^2\to\R$ be a continuous centred stationary Gaussian field. In this paper we study the (upper-)excursion sets and level sets of $f$, that is, the random sets
	\begin{displaymath}
	\{f\geq\ell\}:=\left\{x\in\R^2 \middle|  f(x)\geq\ell\right\}\quad\text{and}\quad\{f=\ell\}:=\left\{x\in\R^2\middle|f(x)=\ell\right\}
	\end{displaymath}
	for $\ell\in\R$. For a wide class of fields, and for many levels $\ell$, we derive lower bounds on the fluctuations of the number of connected components of these sets contained inside large domains. We expect that these bounds are of the correct order, at least for generic levels.
	
	%\smallskip
	Gaussian fields are used as a model for spatial phenomena in many fields of science (e.g.\ in quantum chaos \cite{jain2017nodal}, medical imaging \cite{worsley1996unified}, oceanography \cite{azais2009level}, cosmology \cite{bardeen86} etc.), and the analysis of their excursion/level sets has many potential applications. To give an example, cosmological theories predict that the Cosmic Microwave Background Radiation (CMBR) can be modelled as a realisation of an isotropic Gaussian field on the two-dimensional sphere~\cite{bardeen86}. One way to test this prediction is to compare geometric properties of the excursion/level sets of the CMBR with the Gaussian model; for instance, a recent analysis \cite{pranav2018unexpected} used the number of excursion set components as a test statistic. We expect that a rigorous understanding of the fluctuations of this quantity will make such statistical analyses more robust.
	
	%\smallskip
	The number of connected components of the excursion/level sets of a Gaussian field are inherently difficult quantities to study because they are `non-local'; the number of components in a domain cannot be counted by partitioning the domain and summing the number of components in each sub-domain, since some components will intersect multiple sub-domains. This can be contrasted with other `local' functionals, such as the length of a level set, the number of critical points, or the Euler characteristic of an excursion set (the locality of which can be seen from the Gauss--Bonnet theorem).
	
	%\smallskip
	Functionals of Gaussian fields which are `non-local' cannot easily be analysed using classical tools such as the Kac-Rice formula \cite[Chapter 11]{RFG} or the Wiener chaos expansion \cite[Chapter 2]{jan}. Nevertheless, the number of excursion/level set components of planar Gaussian fields have recently been studied using other, more general, techniques. Nazarov and Sodin \cite{SodinNazarov2015asymptotic} used an ergodic argument to prove a law of large numbers. Specifically, they showed that if $f$ is an ergodic field satisfying some regularity assumptions, $D_R:=(-R/2,R/2)^2$ is the open square of side length $R$ centred at the origin, and $N_\mathrm{LS}(D_R,\ell)$ denotes the number of components of the level set $\{f=\ell\}$ contained in $D_R$ (i.e.\ those which intersect $D_R$ but not $\partial D_R$), then there exists a constant $c_\mathrm{LS}>0$ such that
	\begin{displaymath}
	\frac{N_\mathrm{LS}(D_R,0)}{R^2}\to c_\mathrm{LS}
	\end{displaymath}
	as $R\to\infty$, where convergence occurs in $L^1$ and almost surely. Although this result was stated only for the nodal set (i.e.\ the zero level set), the arguments in \cite{SodinNazarov2015asymptotic} go through verbatim for excursion/level sets at arbitrary levels.
	
	%\smallskip
	Results on the fluctuations of the number of excursion/level set components are comparatively lacking. Each excursion set component contains at least one critical point, and each level set component `surrounds' an (upper or lower) excursion set component. Since the number of critical points in a domain has a finite second moment which scales like the square of the area of the domain \cite{eliz85,esfo16}, it follows that there exists a positive constant $c_1=c_1(\ell)$ such that, for all sufficiently large $R$,
	\begin{equation}
	\label{e:ub}
	\mathrm{Var}(N_\mathrm{LS}(D_R,\ell)) < c_1 R^4 \quad \text{and} \quad \mathrm{Var}(N_\mathrm{ES}(D_R,\ell)) < c_1 R^4,
	\end{equation}
where $N_\mathrm{ES}(D_R,\ell)$ denotes the number of components of $\{f\geq \ell\}$ contained in $D_R$. While the upper bound of order $R^4$ is attained in certain degenerate cases (see Proposition~\ref{t:Delta mass zero}), it is expected that the number of excursion/level sets of generic fields (i.e.\ those with rapid correlation decay) has variance of order exactly $R^2$ (see Section~\ref{ss:diss}).
	
	%\smallskip
	To the best of our knowledge, up until now the only non-trivial lower bound on the variance of either $N_\mathrm{LS}(D_R,\ell)$ or $N_\mathrm{ES}(D_R,\ell)$ is the recent result of Nazarov and Sodin \cite{ns19} that $\mathrm{Var}(N_\mathrm{LS}(D_R,0))$ grows at least like some positive power of $R$ (more precisely, they consider a related model of sequences of Gaussian fields on the sphere); the exponent in their bound is unspecified and not expected to be optimal. It is unclear whether their methods extend to studying $N_\mathrm{LS}(D_R,\ell)$ for $\ell \neq 0$ or to $N_\mathrm{ES}(D_R,\ell)$. Nazarov and Sodin \cite{nazarov2009number} have also improved the upper bound: they have shown that $\mathrm{Var}(N_\mathrm{LS}(D_R,0)) < c R^{4- 2/15}$ in the case of random spherical harmonics (which are closely related to the Random Plane Wave that we discuss below). Weaker concentration bounds have also been established for general fields \cite{rivera2017quasi, Beliaev2018Cov}.
	
	%\smallskip
	In this work, we prove lower bounds on $\mathrm{Var}(N_\mathrm{LS}(D_R,\ell))$ and $\mathrm{Var}(N_\mathrm{ES}(D_R,\ell))$ that are, conjecturally at least, of the correct order. To summarise our main results (see Theorems \ref{t:fluctuations general} and~\ref{t:fluctuations RPW}), we show that for a wide class of Gaussian fields there exists an exponent $\alpha \in [2,4]$ such that, for many levels $\ell$,
	\begin{equation}
	\label{e:main}
	\mathrm{Var}(N_\mathrm{LS}(D_R,\ell)) > c R^\alpha \quad \text{and} \quad \mathrm{Var}(N_\mathrm{ES}(D_R,\ell))> c R^\alpha .
	\end{equation}
	for some $c = c(\ell) > 0$ and all $R$ sufficiently large. The value of $\alpha \in [2,4]$ is explicit and depends on the behaviour at the origin of the spectral measure of the field (see \eqref{e:sm} for the definition of the spectral measure). For fields with rapid correlation decay and positive spectral density at the origin, the bound \eqref{e:main} holds for $\alpha=2$, whereas for fields whose spectral measure has a singularity at the origin, \eqref{e:main} holds for an $\alpha \in (2,4)$ that depends on the polynomial exponent of the singularity. We also study the important special case of the Random Plane Wave, for which we show that the bound \eqref{e:main} holds for $\alpha = 3$. Interestingly, this result is inconsistent with the predictions of the well-known Bogomolny-Schmit conjecture~\cite{bogomolny2002percolation} that $\mathrm{Var}(N_\mathrm{LS}(D_R,\ell)) \sim c R^2$ for the Random Plane Wave (see the discussion in Section~\ref{ss:diss}), although our results do not apply to the nodal set which is the most important case of the conjecture.
	
	%\smallskip
	We establish the variance bounds in \eqref{e:main} for a wide range of levels. For general fields, the bound for excursion sets holds for all levels $\ell$ in a neighbourhood of zero (the nodal level $\ell = 0$ is excluded for the Random Plane Wave), and when $\ell$ is sufficiently large the bound holds for both excursion and level sets (see Corollaries \ref{c:BF}, \ref{c:general} and \ref{c:Variance RPW}). Indeed, Theorems~\ref{t:fluctuations general} and~\ref{t:fluctuations RPW} give a sufficient condition for \eqref{e:main} which we expect to be satisfied for all but a very small, finite number of levels $\ell$. In fact we suspect (see Section~\ref{ss:diss}) that this condition should fail for only one or three values of $\ell$ (depending on the field).
	
	%\smallskip 
	On the other hand, we do not expect that \eqref{e:main} is necessarily true for all levels. While we conjecture that \eqref{e:main} holds for generic levels, we expect that for some fields there exists a finite set of `anomalous' levels at which the variance is of strictly lower order (see Conjectures~\ref{c:1} and \ref{c:2} for a precise statement). This phenomenon is reminiscent of `Berry cancellation', i.e.\ the known fact that, for some fields such as the Random Plane Wave, the variance of the length of the nodal set is of lower order than for non-zero levels \cite{berry2002statistics, wigman2010fluctuations, npr}. 
	
	%\bigskip
	
	\section*{Acknowledgements}
	The first author was partially funded by the Engineering \& Physical Sciences Research Council (EPSRC) Fellowship EP/M002896/1 and partially supported by the Ministry of Science and Higher Education of Russia (subsidy in the form of a grant for creation and development of International Mathematical Centers, agreement no.  075-15-2019-1620, November 8, 2019). The second author was supported by the European Research Council (ERC) Advanced Grant QFPROBA (grant number 741487). The third author was partially supported by the Australian Research Council (ARC) Discovery Early Career Researcher Award DE200101467.
	
	The authors would like to thank an anonymous referee for many comments which substantially improved the arrangement of our arguments and especially for pointing out an error in the original proof of Lemma~\ref{l:RPW perturbation}. The authors would also like to thank Benedetta Cavalli for making us aware of a mistake in the initial proof of Theorem~\ref{t:fluctuations RPW}.

	\section{Main results}
	We consider a Gaussian field ${f:\R^2\to\R}$ which is continuous, centred and stationary, and let $\kappa( x):=\mathbb{E}(f( x)f( 0))$ be its covariance function. Throughout the paper we make the following basic assumption:
	
	\begin{assumption}\label{a:minimal}
		The Gaussian field $f:\R^2\to\R$ is $C^3$-smooth almost surely, and normalised so that, for each $x \in \mathbb{R}^2$,
		\begin{equation}
		\label{e:norm}
		\mathbb{E}(f( x)) = 0 \, ,  \quad \mathrm{Var}(f( x)) = 1 \quad \text{and} \quad \mathrm{Cov}( \nabla f( x) ) = c I_2,  
		\end{equation}
		where $c$ is a positive constant and $I_2$ is the $2\times 2$ identity matrix. In addition we assume that
		\[ \max_{\lvert\alpha\rvert\leq 2}\left\lvert\partial_\alpha\kappa( x)\right\rvert\to 0 \qquad \text{as } \lvert  x\rvert\to\infty , \]
		and that, for every $x\in\R^2\backslash\{0\}$,
		\begin{equation}
		\label{e:nondegen}
		\nabla^2 f(0) \quad \text{and} \quad (f(x),f(0),\nabla f(x),\nabla f(0))  
		\end{equation}
		are non-degenerate Gaussian random variables.
	\end{assumption}
	
	This assumption implies, in particular, that $\kappa$ is of class $C^6$ \cite[Appendix~A.3]{SodinNazarov2015asymptotic}, and also that the field is ergodic \cite[Appendix~B]{SodinNazarov2015asymptotic}. We impose the normalisation \eqref{e:norm} for simplicity; since $(f(0),\nabla f(0))$ is assumed to be non-degenerate we can always apply a linear rescaling and rotation to the domain of $f$ so that \eqref{e:norm} holds. A sufficient condition for \eqref{e:nondegen} to be non-degenerate is that the support of the spectral measure (see the definition in \eqref{e:sm}) contains either an open set or a centred ellipse \cite[Lemma~A.2]{Beliaev2019smoothness}.
	
	%\smallskip
	We begin by formally stating the law of large numbers for excursion/level sets (noting that this actually holds under weaker conditions than those which we give). We fix an open rectangle $D\subset\R^2$ centred at the origin. For $R\geq 1$, we let $D_R=\{x\in\R^2:x/R\in D\}$ and let 	$N_\mathrm{ES}(D_R,\ell)$ denote the number of components of $\{f\geq\ell\}$ contained in $D_R$ (i.e.\ those which intersect $D_R$ but not $\partial D_R$). We define $N_\mathrm{LS}(D_R,\ell)$ analogously for $\{f=\ell\}$.
	
	\begin{theorem}[\cite{SodinNazarov2015asymptotic,kurlberg2017variation,Beliaev2018Number}]\label{t:main level}
		Let $f$ be a Gaussian field satisfying Assumption \ref{a:minimal}. For each $\ell\in\R$, there exist $c_\mathrm{ES}(\ell),c_\mathrm{LS}(\ell)\geq 0$ such that
\[
\begin{aligned}
\mathbb{E}(N_\mathrm{ES}(D_R,\ell))=c_\mathrm{ES}(\ell)\cdot \mathrm{Area}(D)\cdot R^2+O(R),\\
\mathbb{E}(N_\mathrm{LS}(D_R,\ell))=c_\mathrm{LS}(\ell)\cdot \mathrm{Area}(D)\cdot R^2+O(R)
\end{aligned}
\]
as $R\to\infty$. The constants implied by the $O(\cdot)$ notation are independent of $\ell$. Furthermore
\begin{align*}
\frac{N_\mathrm{ES}(D_R,\ell)}{\mathrm{Area}(D)\cdot R^2} \rightarrow c_\mathrm{ES}(\ell) \quad \text{and} \quad 
\frac{N_\mathrm{LS}(D_R,\ell)}{\mathrm{Area}(D)\cdot R^2}\rightarrow c_\mathrm{LS}(\ell)
\end{align*}
almost surely and in $L^1$.
\end{theorem}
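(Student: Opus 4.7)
The plan is to split the proof into (a) the sharp mean asymptotic $\mathbb{E}(N_\mathrm{ES}(D_R,\ell)) = c_\mathrm{ES}(\ell)\mathrm{Area}(D) R^2 + O(R)$, and (b) almost-sure and $L^1$ convergence of $N_\mathrm{ES}(D_R,\ell)/R^2$ to $c_\mathrm{ES}(\ell)\mathrm{Area}(D)$. I describe the argument for excursion sets; the argument for level sets is parallel.

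For (a), let $\widetilde{N}(D_R,\ell)$ denote the number of components of $\{f\ge\ell\}$ that intersect $D_R$ (regardless of containment). The discrepancy $\widetilde{N}(D_R,\ell)-N_\mathrm{ES}(D_R,\ell)$ is bounded by the number of components that cross $\partial D_R$, which in turn is bounded by the number of points in $\{f=\ell\}\cap\partial D_R$. Applying the one-dimensional Kac-Rice formula to $f$ restricted to each of the four sides of $\partial D_R$, this expectation equals $R$ times the level-crossing intensity $\pi^{-1}\sqrt{-\kappa''(0)}\,e^{-\ell^2/2}$, which is uniformly bounded in $\ell$ under Assumption~\ref{a:minimal}. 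Hence the discrepancy is $O(R)$ uniformly in $\ell$. Existence of the limit $c_\mathrm{ES}(\ell) := \lim_{R\to\infty}\mathbb{E}(N_\mathrm{ES}(D_R,\ell))/(R^2\mathrm{Area}(D))$ follows from a Fekete-type subadditivity argument, where the same boundary bound controls the overcount/undercount when partitioning or merging tiles.

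For (b), I would use the Nazarov-Sodin integral-geometry sandwich. For $r \ll R$, tile $D_R$ by nearly disjoint translates of $B_r := [-r/2,r/2]^2$ and let $M_r(x)$ be the number of components of $\{f \geq \ell\}$ contained in $x + B_r$. A component of $\{f\ge\ell\}$ contained in $D_R$ either lies inside a single tile (and is counted exactly once in $\sum_x M_r(x)$) or crosses a tile boundary; the total internal tile-boundary length is $O(R^2/r)$, so by the Kac-Rice bound of step (a), $|N_\mathrm{ES}(D_R,\ell) - \sum_x M_r(x)| = O(R^2/r)$ in expectation, and a second-moment Kac-Rice estimate combined with Borel-Cantelli along a subsequence upgrades this to an almost-sure bound. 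For each fixed $r$, Wiener's multi-parameter ergodic theorem applied to the stationary ergodic field $f$ gives
\[
\frac{1}{R^2}\sum_x M_r(x) \to \frac{\mathbb{E}(M_r(0))}{r^2}\mathrm{Area}(D) \quad \text{a.s.\ as } R\to\infty,
\]
and the right-hand side tends to $c_\mathrm{ES}(\ell)\mathrm{Area}(D)$ as $r\to\infty$ by the subadditive argument in (a). Sending $R\to\infty$ first, then $r\to\infty$, yields the almost-sure limit. For $L^1$ convergence, each component of $\{f \ge \ell\}$ contains at least one local maximum of $f$, so $N_\mathrm{ES}(D_R,\ell) \le N_\mathrm{crit}(D_R,f)$; the Kac-Rice formula gives $\mathbb{E}(N_\mathrm{crit}(D_R,f)^2) = O(R^4)$ under Assumption~\ref{a:minimal} by \cite{eliz85,esfo16}, which supplies uniform integrability of $N_\mathrm{ES}(D_R,\ell)/R^2$.

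The main obstacle is maintaining the claimed uniformity of error constants in $\ell$. This reduces to checking that the relevant Kac-Rice densities for level crossings and for critical points are bounded uniformly in $\ell\in\R$; under the normalisation \eqref{e:norm} and the non-degeneracy condition \eqref{e:nondegen}, these densities are explicit Gaussian integrals that decay in $|\ell|$, so the uniformity is straightforward.
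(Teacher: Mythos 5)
The paper does not prove Theorem~\ref{t:main level} itself but imports it from \cite{SodinNazarov2015asymptotic,kurlberg2017variation,Beliaev2018Number}, and your sketch reproduces essentially the argument of those references: the integral-geometry sandwich with Kac--Rice control of level crossings on (tile) boundaries for the mean asymptotic with $O(R)$ error uniform in $\ell$, Wiener's ergodic theorem for the almost-sure limit, and a second-moment bound on critical points for uniform integrability and hence $L^1$ convergence. No genuine gap; the only points needing care in a full write-up are that lattice (rather than continuous) averaging requires ergodicity under the subgroup $r\mathbb{Z}^2$ (which holds here since $\kappa\to 0$ at infinity makes $f$ mixing) and that the $O(R)$ rate comes from the two-sided tiling comparison rather than Fekete's lemma alone --- both of which your outline already accommodates.
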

The limiting constants $c_\mathrm{ES}(\ell)$ and $c_\mathrm{LS}(\ell)$ describe the asymptotic density of excursion sets and level sets respectively. Since they are defined implicitly, very little is known rigorously about them. In \cite{Beliaev2018Number} a representation was given in terms of critical points of various types. For $R > 0$ and $a \le b$, we define $N_{h}(D_R,[a,b])$, for $h = m^+, m^-, s^+, s^-$, to be the number of local maxima, local minima, upper connected saddles and lower connected saddles respectively of $f$ in $D_R$ with level in $[a,b]$ (see \cite{Beliaev2018Number} for the definition of upper/lower connected saddles; the precise definition has no relevance to the current paper).
	
	\begin{theorem}[{\cite[Proposition~1.8, Theorem~1.9]{Beliaev2018Number}}]
		\label{t:integral equality}
		Let $f$ be a Gaussian field satisfying Assumption \ref{a:minimal} and $D$ be an open rectangle centred at the origin. Then for all $R > 0$ and $a \le b$,
%		\[ c_\mathrm{ES}(a) - c_\mathrm{ES}(b) = \frac{1}{a R^2} \big(  \mathbb{E} ( N_{m^+}(D_R,[a,b]) )  - \mathbb{E} ( N_{s^-}(D_R,[a,b]) ) \big) \]
%		and
%				\begin{multline*}
%		c_\mathrm{LS}(a) - c_\mathrm{LS}(b)  = \frac{1}{a R^2} \big(  \mathbb{E} ( N_{m^+}(D_R,[a,b]) )  - \mathbb{E} ( N_{s^-}(D_R,[a,b]) ) \\
%+ \mathbb{E} ( N_{s^+}(D_R,[a,b]) )  - \mathbb{E} ( N_{m^-}(D_R,[a,b]) )  \big). 
%		\end{multline*}
\[ c_\mathrm{ES}(a) - c_\mathrm{ES}(b) = \frac{1}{\mathrm{Area}(D) R^2} \Big( \mathbb{E} ( N_{m^+}(D_R,[a,b]) - N_{s^-}(D_R,[a,b]) ) \Big) \]
		and
\begin{multline*}
		c_\mathrm{LS}(a) - c_\mathrm{LS}(b)  = \frac{1}{\mathrm{Area}(D) R^2} \Big(  \mathbb{E} ( N_{m^+}(D_R,[a,b])   -   N_{s^-}(D_R,[a,b]) ) \\
+ \mathbb{E} ( N_{s^+}(D_R,[a,b])- N_{m^-}(D_R,[a,b]) )  \Big). 
		\end{multline*}
	\end{theorem}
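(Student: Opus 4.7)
My plan is to prove both identities by a deterministic Morse-theoretic analysis, tracking how $N_\mathrm{ES}(D_R,\ell)$ and $N_\mathrm{LS}(D_R,\ell)$ change as $\ell$ decreases from $b$ to $a$, and then passing to expectations. I would first restrict to the full-measure event on which $f|_{\overline{D_R}}$ is Morse, neither $a$ nor $b$ is a critical value, and every level set $\{f=\ell\}$ for $\ell\in[a,b]$ is transverse to $\partial D_R$ outside finitely many exceptional $\ell$; under Assumption~\ref{a:minimal} these are standard genericity statements. On this event, $\ell\mapsto N_\mathrm{ES}(D_R,\ell)$ is piecewise constant in $\ell$, with jumps only at interior critical values of $f$ in $[a,b]$ and at the finite set of $\ell$ at which a level curve is tangent to $\partial D_R$ (the latter contributions will be absorbed into an error term).

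The key combinatorial step is to classify the local change in $N_\mathrm{ES}$ at each interior critical value as $\ell$ decreases through it. A local maximum contributes $+1$ since a fresh small disc of $\{f\ge\ell\}$ appears well inside $D_R$; a local minimum contributes $0$ since only an existing component fills in a hole; an upper connected saddle $s^+$ contributes $0$ since the two upper prongs already belong to a common component of $\{f\ge\ell\}$ and merely acquire an extra handle; and a lower connected saddle $s^-$ contributes $-1$ since two previously distinct components of $\{f\ge\ell\}$ are joined through the saddle. Summing these contributions gives the deterministic identity
\[
N_\mathrm{ES}(D_R,a) - N_\mathrm{ES}(D_R,b) = N_{m^+}(D_R,[a,b]) - N_{s^-}(D_R,[a,b]) + E^\mathrm{ES}_R,
\]
with an error term $E^\mathrm{ES}_R$ bounded by a constant times the number of components of $\{f=\ell\}$ that meet $\partial D_R$ for some $\ell\in[a,b]$. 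For the level-set identity I would bypass the more delicate saddle case analysis by using the duality
\[
N_\mathrm{LS}(D_R,\ell) = N_\mathrm{ES}(D_R,\ell) + N^{(-f)}_\mathrm{ES}(D_R,-\ell) + O_{\partial D_R},
\]
which holds because every component of $\{f=\ell\}$ contained in $D_R$ is the outer boundary of exactly one bounded upper or lower excursion component. Applying the excursion-set identity separately to $f$ on $[a,b]$ and to $-f$ on $[-b,-a]$, and noting that $f\mapsto -f$ swaps $m^+\leftrightarrow m^-$ and $s^+\leftrightarrow s^-$, yields the stated level-set identity up to a boundary error $E^\mathrm{LS}_R$.

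I would then take expectations. A standard Kac--Rice bound applied to intersections of $\{f=\ell\}$ with $\partial D_R$, integrated over $\ell\in[a,b]$, gives $\mathbb{E}|E^\mathrm{ES}_R|,\mathbb{E}|E^\mathrm{LS}_R|=O(R)$. Dividing by $\mathrm{Area}(D)R^2$ and letting $R\to\infty$, the error terms vanish and the left-hand sides converge to $c_\mathrm{ES}(a)-c_\mathrm{ES}(b)$ and $c_\mathrm{LS}(a)-c_\mathrm{LS}(b)$ by Theorem~\ref{t:main level}, while the normalised right-hand sides are already independent of $R$ by stationarity, so equality follows for every $R>0$.

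The main obstacle is the saddle classification. Checking that exactly one of the ``E--W connected in upper'' and ``N--S connected in lower'' patterns holds at each generic saddle (a planar-duality fact: both cannot occur simultaneously without forcing a contradictory crossing in $\R^2$) and that this $s^+/s^-$ dichotomy correctly captures the $\pm 1$ change in $N_\mathrm{ES}$ requires a careful local-to-global topological argument. Dealing with $N_\mathrm{LS}$ directly would also introduce a three-way case split depending on how the four arc-ends at a saddle are reconnected outside its neighbourhood; the reduction to $-f$ is the cleanest way to avoid that further complication.
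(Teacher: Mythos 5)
Your proposal is correct and follows essentially the same route as the paper's source for this result (the paper only cites it from [Beliaev2018Number]; its Lemma~\ref{l:morse} is exactly your deterministic jump identity with a boundary error, the Remark after it records the same $f\mapsto -f$ swap of $m^\pm$ and $s^\pm$ for level sets, and the passage to expectations, the $O(R)$ boundary control, and the upgrade from the $R\to\infty$ limit to an exact identity via stationarity of the marked critical-point processes all match). The one loose point is your error term: rather than integrating boundary intersections over $\ell\in[a,b]$, the clean bound is the number of critical points of $f|_{\partial D_R}$ with level in $[a,b]$ (the tangencies $N_\mathrm{tang}(D_R,[a,b])$, which also absorb the four-arm saddles accounting for the mismatch between global connectivity in the definition of $s^\pm$ and connectivity inside $D_R$), whose expectation is $O(R)$ by a one-dimensional Kac--Rice computation.
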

	
	It can be deduced from the above representation that $c_\mathrm{ES}$ and $c_\mathrm{LS}$ are absolutely continuous. In \cite{Beliaev2019smoothness} additional smoothness and monotonicity properties of $c_\mathrm{ES}$ and $c_\mathrm{LS}$ were derived; for instance, it was shown that $c_\mathrm{ES}(\ell)$ and $c_\mathrm{LS}(\ell)$ are continuously differentiable in $\ell$ for a wide class of fields. 
	
	\subsection{Fluctuations of the number of level/excursion set components}
	Our main results concern the order of fluctuations of $N_\mathrm{ES}$ and $N_\mathrm{LS}$. To formalise this concept we make use of the following definition, taken from \cite{chatterjee2017general}.
	
	\begin{definition}
		\label{d:fluctuations}
		Let $X_n$ be a sequence of random variables and $u_n$ a sequence of positive real numbers. We say that $X_n$ has \textit{fluctuations of order at least} $u_n$ if there exist $c_1,c_2>0$ such that, for all sufficiently large $n$ and all real numbers $a\leq b$ with $b-a\leq c_1u_n$, 
		\[ \mathbb{P}(a\leq X_n\leq b)\leq 1-c_2 . \]
		Similarly, we say that a collection of random variables $(X_R)_{R\geq 0}$ has \textit{fluctuations of order at least} $(u_R)_{R \geq 0}$ if, for any increasing sequence $R_n\to\infty$, $X_{R_n}$ has fluctuations of order at least~$u_{R_n}$.
	\end{definition}
	
	It is easy to see that if a collection of random variables $(X_n)_{n \ge 0}$ has fluctuations of order at least $(u_n)_{n \ge 0}$ then it has variance of order at least $u_n^2$, i.e.\ there exists $c>0$ such that 
	\begin{equation}
	\label{e:lbv}
	\mathrm{Var}(X_n) > cu_n^2 
	\end{equation}
	for all $n$ sufficiently large. On the other hand, having fluctuations of order at least $u_n$ is a strictly stronger statement than \eqref{e:lbv}, since the latter is consistent with the bulk of the probability mass concentrating on arbitrarily small scales.
	
	%\smallskip
	We now present our main results on the fluctuations of $N_\mathrm{ES}$ and $N_\mathrm{LS}$, which are divided into three statements. The first applies to general fields, and in particular to fields that either (i) have fast correlation decay and positive spectral density at the origin, or (ii) whose spectral measure has a singularity at the origin. The second concerns the special case of the Random Plane Wave. The third treats a certain class of somewhat degenerate fields.
	
	\subsubsection{General fields}
	To state our first result we introduce some additional assumptions on the field $f$. Recall that $\kappa$ is the covariance function of $f$. Since $\kappa$ is continuous, Bochner's theorem states that it is the Fourier transform of a positive measure $\mu$ which is Hermitian (i.e.\ $\mu(A)=\mu(-A)$ for any Borel set $A$), that is, for all $x\in\R^2$
	\begin{equation}
	\label{e:sm}
	\kappa( x)=\int_{\R^2}e^{2\pi i t\cdot x}\;d\mu(t).
	\end{equation}
	We refer to $\mu$ as the \textit{spectral measure} of the field. For some of our results we will assume that $\mu$ has a density; provided it exists, we denote this by $\rho(\cdot)$.
	
	\begin{assumption}\label{a:spectral mass}There exists a neighbourhood $V\subset\R^2$ of the origin such that the spectral measure $\mu$ has density $\rho$ on $V$ and $\inf_V\rho>0$.
	\end{assumption}
	
    The simplest way to guarantee the existence of $\rho$ is to assume that $\kappa\in L^1$; in this case $\rho$ is uniformly continuous. If we additionally assume that $\int\kappa=\rho(0)>0$, then there is a neighbourhood of the origin where $\rho$ is bounded away from $0$. On the other hand, Assumption~\ref{a:spectral mass} also allows for $\rho$ to have a singularity at the origin.
    
    Under Assumption~\ref{a:spectral mass} the support of $\mu$ contains an open set, and so the Gaussian vector formed from $f$, $\nabla f$ and $\nabla^2 f$ at a finite number of distinct points is non-degenerate (see \cite[Lemma A.2]{Beliaev2019smoothness}). 
	
	%\smallskip
For the case in which the spectral measure does not have a singularity at the origin, we will need to assume some extra control over the saddle points of the field. Let $ x_0 \in \R^2$ be a saddle point of a function $g\in C^2_\mathrm{loc}(\R^2)$ such that $g$ has no other critical points at the same level as $x_0$. We say that $x_0$ is \textit{four-arm in $D_R$} if it is in the closure of two components of $\{ x\in D_R:g( x)>g( x_0)\}$ and two components of $\{ x\in D_R:g( x)<g( x_0)\}$. Under some regularity assumptions on the function $g$, this definition implies that the four level lines (or `arms') which `emerge' from $x_0$ will all hit the boundary of $D_R$ (i.e.\ they will not `join together'). For $a \le b$, let $N_\mathrm{4\mhyphen arm}(D_R,[a,b])$ be the number of saddle points of $f$ which are four-arm in $D_R$ and have level in $[a,b]$.
	
\begin{assumption}\label{a:four arm decay}
For each open rectangle $D$ centred at the origin and every $a \le b$, there exists a function $\delta_R\to0$ as $R\to\infty$ and a constant $c>0$ such that, for each $R>1$ and $a\leq a_R\leq b_R\leq b$,
\begin{displaymath}
\mathbb{E}\left(N_\mathrm{4\mhyphen arm}\left(D_R,\left[a_R,b_R\right]\right)\right)\leq c\min\left\{\delta_R R^2(b_R-a_R),R\right\} .
\end{displaymath}
\end{assumption}
	
Sufficient conditions for a field to satisfy Assumption \ref{a:four arm decay} are given in \cite[Corollary~2.12]{Beliaev2019smoothness} (this result actually gives the analogous bound for the expected number of four-arm saddle points in $B(R)$ the ball of radius $R$, but since $B(c_DR)\subset D_R\subset B(C_DR)$ for some constants $c_D,C_D>0$ and all $R>0$, the two statements are equivalent). In particular this assumption is satisfied for isotropic fields whose correlations are positive and rapidly decaying, which includes the important special case of the Bargmann-Fock field, i.e., the field with covariance function $\kappa( x)=\exp(-\lvert  x\rvert^2/2)$ (see \cite{beffara2017percolation} for background).
	
	%\smallskip
	We can now state our fluctuation lower bound for general fields. Recall that the \textit{Dini derivatives} are a generalisation of the usual derivative, and coincide in the case of continuously differentiable functions (see \eqref{e:dini1} and \eqref{e:dini2} for a formal definition).
	
	\begin{theorem}\label{t:fluctuations general}
		Let $f$ be a Gaussian field satisfying Assumptions \ref{a:minimal} and \ref{a:spectral mass} and define $g(r)= \inf_{ x\in B(2r)}\rho( x)$. Let $D\subset\R^2$ be an open rectangle centred at the origin and recall that $D_R=\{x\in\R^2:x/R\in D\}$.  Suppose further that at least one of the following holds:
		\begin{enumerate}
			\item The field $f$ satisfies Assumption \ref{a:four arm decay}, or
			\item The spectral measure $\mu$ has a singularity at the origin, i.e.\ $g(r) \to \infty$ as $r\to 0$.%\footnote{\new{BD: The way it is written, this contradicts the assumption that there is density in a neighbourhood of the origin. May be we should use punctured neighbourhood? SM: Does it really? We don't say it's continuous, so it's still a density even if it blows up. M: On page 19 we say that the density is continuous, but it seems like we don't actually need this. So I think we can remove the statement of continuity and replace all of the infima of $\rho$ by essential infima.}}
		\end{enumerate}
		If $c_\mathrm{ES}$ has a positive lower Dini derivative at $\ell$ (or a negative upper Dini derivative), then $(N_\mathrm{ES}(D_R,\ell))_{R\geq0}$ has fluctuations of order at least $R\sqrt{g(1/R)}$, and hence variance of order at least $R^2g(1/R)$. The same conclusion holds if we replace $N_\mathrm{ES}$ and $c_\mathrm{ES}$ with $N_\mathrm{LS}$ and $c_\mathrm{LS}$ respectively.
	\end{theorem}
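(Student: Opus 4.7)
My plan is to use a spectral decomposition $f=Y+Z$ where $Y$ is a ``low-frequency'' part that acts approximately as a random constant level shift on $D_R$, and to exploit the Dini derivative hypothesis to convert the variability of this shift into fluctuations of $N_\mathrm{ES}^f(D_R,\ell)$.

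\textbf{Setup.} Fix a small constant $\epsilon>0$ and let $Y,Z$ be independent centred Gaussian fields whose spectral measures are $\mu|_{B(\epsilon/R)}$ and $\mu|_{\R^2\setminus B(\epsilon/R)}$ respectively, so that $Y+Z$ has the same law as $f$. The scalar $Y(0)$ is Gaussian with variance $\mu(B(\epsilon/R))\geq c\,\epsilon^2 g(1/R)/R^2$, using Assumption~\ref{a:spectral mass} and the monotonicity $g(\epsilon/(2R))\geq g(1/R)$ for $\epsilon\leq 2$. Since $Y$ has spectrum in a ball of radius $\epsilon/R$, and $D_R$ sits inside a single coherence patch of $Y$ once $\epsilon<1$, standard Gaussian derivative estimates yield $\sup_{x\in D_R}|Y(x)-Y(0)|\leq C\epsilon^2\sqrt{g(1/R)}/R$ with high probability. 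In particular, $\sup|Y-Y(0)|$ is smaller than the standard deviation of $Y(0)$ by a factor of order $\epsilon$.

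\textbf{Level-shift reduction.} Because $Y\approx Y(0)$ on $D_R$, one expects $N_\mathrm{ES}^f(D_R,\ell)\approx N_\mathrm{ES}^Z(D_R,\ell-Y(0))$. To make this rigorous I control the error by the number of components of the level set of $Z$ that are created or destroyed as the threshold is moved across $[\ell-Y(0)-\eta,\ell-Y(0)+\eta]$ with $\eta=\sup_{D_R}|Y-Y(0)|$. Using the Morse-theoretic identity underlying Theorem~\ref{t:integral equality}, this reduces, up to an $O(R)$ boundary term, to the count of four-arm saddles of $Z$ in the relevant level band. Under Assumption~\ref{a:four arm decay} the expectation of this count is $O(\delta_R R^2\eta+R)=O(\delta_R\epsilon^2 R\sqrt{g(1/R)}+R)$, which is $o(R\sqrt{g(1/R)})$ in case~(1) since $g(1/R)$ is bounded below and $\delta_R\to 0$. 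In case~(2), where $g(1/R)\to\infty$, the trivial $O(R)$ Kac--Rice bound on boundary-touching components (valid since $Z$ inherits spectral non-degeneracy from Assumption~\ref{a:spectral mass}) is already negligible compared to $R\sqrt{g(1/R)}$, so Assumption~\ref{a:four arm decay} is not needed there.

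\textbf{Dini monotonicity and fluctuation bound.} Conditional on $Z$, the map $y\mapsto N_\mathrm{ES}^Z(D_R,\ell-y)$ is a random function of a single real variable. Applying the Morse identity once more, its increment over an interval of length $|y-y'|\asymp\sqrt{g(1/R)}/R$ differs from the deterministic quantity $(c_\mathrm{ES}(\ell-y')-c_\mathrm{ES}(\ell-y))\mathrm{Area}(D)R^2$ by a random term of order $R$, coming from the $O(R^2)$ variance of critical-point counts in thin level bands. The positive lower Dini derivative hypothesis forces the deterministic part to be at least $cR\sqrt{g(1/R)}$, which dominates the $O(R)$ remainder (since $\sqrt{g(1/R)}$ is either bounded below in case~(1) or tends to infinity in case~(2)). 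Hence with high probability $y\mapsto N_\mathrm{ES}^Z(D_R,\ell-y)$ is approximately strictly monotone with slope $\Theta(R^2)$ on the scale $\sigma_{Y(0)}$. For any interval $[a,b]$ with $b-a\leq c_1 R\sqrt{g(1/R)}$ and $c_1$ small, the event $\{N_\mathrm{ES}^f(D_R,\ell)\in[a,b]\}$ forces $Y(0)$ into a (random) interval of length $O(c_1\sqrt{g(1/R)}/R)$, whose Gaussian probability is bounded away from $1$, giving the required fluctuation bound of Definition~\ref{d:fluctuations}. The same argument treats $N_\mathrm{LS}$ upon replacing $N_{m^+}-N_{s^-}$ by $N_{m^+}-N_{s^-}+N_{s^+}-N_{m^-}$ as in Theorem~\ref{t:integral equality}.

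\textbf{Main obstacle.} The crux is the level-shift reduction: converting the smallness of $\sup|Y-Y(0)|$ into a comparable bound on $|N_\mathrm{ES}^f(D_R,\ell)-N_\mathrm{ES}^Z(D_R,\ell-Y(0))|$. This requires a careful balance in the cutoff $\epsilon/R$, trading the magnitude of the shift $Y(0)$ against the residual $Y-Y(0)$, and it is precisely where Assumption~\ref{a:four arm decay} is indispensable in case~(1) to prevent boundary-escaping components from inflating the perturbation error. A subsidiary technical point is transferring the Dini derivative hypothesis from $c_\mathrm{ES}$ of $f$ to the corresponding density for $Z$, which is handled via the approximate identity $c_\mathrm{ES}^f(\ell)\approx\mathbb{E}_{Y(0)}[c_\mathrm{ES}^Z(\ell-Y(0))]$ and the fact that $\mathrm{Var}(Y(0))\to 0$ as $\epsilon\to 0$.
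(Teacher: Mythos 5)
Your decomposition $f=Y+Z$ into a low-frequency ``random level shift'' plus an independent remainder is a genuinely different route from the paper (which compares $N_\mathrm{ES}(D_R,\ell)$ with $N_\mathrm{ES}(D_R,\ell+a_R)$ via Chatterjee's lemma, a Paley--Zygmund second-moment argument, and a Cameron--Martin bound on $d_\mathrm{TV}(f,f-a_Rh_{r_R})$), but as written it has two gaps. The first is the level-shift reduction. The sandwich $\{Z\geq\ell-Y(0)+\eta\}\subseteq\{Z+Y\geq\ell\}\subseteq\{Z\geq\ell-Y(0)-\eta\}$ is an inclusion of \emph{sets}, and set inclusion gives no inequality between numbers of connected components; so the error $\lvert N_\mathrm{ES}^{Z+Y}(D_R,\ell)-N_\mathrm{ES}^Z(D_R,\ell-Y(0))\rvert$ is \emph{not} bounded by the number of components created or destroyed as the threshold of $Z$ sweeps a band of width $2\eta$, nor by four-arm saddles of $Z$ in that band. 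Because the perturbation $Y-Y(0)$ is non-constant, one must track the whole homotopy $\alpha\mapsto Z+Y(0)+\alpha(Y-Y(0))$ and count its quasi-critical points (points where the interpolated field and its gradient vanish simultaneously for some $\alpha$); this is exactly the content of Theorem~\ref{t:Morse}, Proposition~\ref{p:Gaussian perturbation} and the Kac--Rice computation in Section~\ref{s:Perturbation}, and it is the main technical work of the paper, not a corollary of Assumption~\ref{a:four arm decay}. (Relatedly, the four-arm assumption enters the paper's proof only through the first-moment lower bound of Lemma~\ref{l:fm}, to kill the $O(R)$ boundary error when $R_na_n\not\to\infty$, i.e.\ in case (1); it plays no role in controlling the perturbation.)

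The second and more serious gap is the claim that, conditionally on $Z$, the map $y\mapsto N_\mathrm{ES}^Z(D_R,\ell-y)$ is ``approximately strictly monotone with slope $\Theta(R^2)$'' up to a random error of order $R$. The only available control on the increment over a band of width $a$ is the second-moment bound $\mathbb{E}\bigl[(N_\mathrm{ES}(\ell)-N_\mathrm{ES}(\ell+a))^2\bigr]\lesssim R^4a^2+R^2a$ (Lemma~\ref{l:sm}), whose order \emph{matches} the square of the mean $R^2a$; there is no concentration statement, so the fluctuation of the increment may be of the same order as the increment itself, and the increment may vanish with non-negligible probability. Consequently the preimage $\phi_Z^{-1}([a,b])$ need not be a short interval and your final anti-concentration step for $Y(0)$ does not follow. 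The paper avoids this entirely: Paley--Zygmund yields only that the two counts differ by $\gtrsim R^2a_R$ with probability bounded \emph{below} by a constant, and Chatterjee's lemma (Lemma~\ref{l:chatterjee}) is precisely the device that converts this weak ``differ with positive probability'' statement, together with $d_\mathrm{TV}\to0$, into the fluctuation lower bound without any monotonicity of the level-count map. To repair your argument you would either need to import Chatterjee's lemma (at which point you essentially recover the paper's proof with $Y(0)$ playing the role of $a_R$) or prove a genuine concentration estimate for critical-point counts in thin level bands, which is not currently known.
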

	
	\begin{remark}
		The variance lower bound $R^2g(1/R)$ interpolates between $R^2$ (if the spectral density is bounded at the origin) and $o(R^4)$ (note that $g(1/R) = o(R^2)$ since $\rho$ is integrable on a neighbourhood of the origin). This is consistent with the trivial upper bound in \eqref{e:ub}.
	\end{remark}
	
	\begin{remark}
		It is shown in \cite{Beliaev2019smoothness} that $c_\mathrm{ES}$ and $c_\mathrm{LS}$ are continuously differentiable for a wide class of Gaussian fields, and in this case the conditions on Dini derivatives in Theorem~\ref{t:fluctuations general} are equivalent to the conditions $c_\mathrm{ES}^\prime(\ell)\neq 0$ and $c_\mathrm{LS}^\prime(\ell)\neq 0$. We expect that $c_\mathrm{ES}$ and $c_\mathrm{LS}$ are continuously differentiable in general, but we lack a comprehensive proof.
		
For general fields we expect that $c_\mathrm{ES}$ and $c_\mathrm{LS}$ have non-zero derivative for all but a small finite number of levels $\ell$ (see Section \ref{ss:diss}). In fact, based on simulations (see \cite{pranav2018unexpected}) we expect $c_\mathrm{ES}$ to be unimodal and $c_\mathrm{LS}$ to be either unimodal or bimodal depending on the field. We therefore hope that Theorem \ref{t:fluctuations general} can eventually be applied to all but a finite number of levels. On the other hand, Theorem \ref{t:fluctuations general} cannot be applied directly to $N_\mathrm{LS}(D_R, 0)$, since by symmetry $c_\mathrm{LS}^\prime(0)=0$ whenever the derivative is defined.
		
		In Section \ref{ss:diss} we give some motivation for why $c_\mathrm{ES}^\prime(\ell)\neq 0$ and $c_\mathrm{LS}^\prime(\ell)\neq 0$ are, in a sense, natural conditions for a lower bound on fluctuations.
	\end{remark}
	
	\begin{remark}
		\label{r:heavytail}
		The case of spectral singularity ($g(r) \to \infty$) is closely related to the case of \textit{long-range dependence}, i.e.\ the case in which $\kappa$ decays sufficiently slowly so as not to be integrable. In particular, standard Abel/Tauberian theorems \cite[Chapter 1.4]{leo99} imply that, up to some regularity assumptions, the asymptotics $\rho( x)  \sim | x|^{-\alpha}$ as $| x|\to0$ and $\kappa( x)  \sim | x|^{\alpha - 2}$ as $| x|\to \infty$ are equivalent for $\alpha \in (0,2)$. Hence, broadly speaking, our results shows that if correlations decay polynomially with exponent $\beta \in (0,2)$, then the variance of $N_\mathrm{ES}$ and $N_\mathrm{LS}$ grow at order at least $R^{4-\beta}$. This is analogous to known results on fluctuations of `local' functionals of long-range dependent Gaussian processes and fields \cite{leo99, slud94}.
	\end{remark}
	
	\begin{remark}
		\label{r:contain}
		Recall that $N_\mathrm{ES}(D_R,\ell)$ and $N_\mathrm{LS}(D_R,\ell)$ count the number of connected components of the excursion/level sets that intersect $D_R$ but which do not intersect the boundary $\partial D_R$; a natural question is whether the result still holds if we include components which intersect the boundary (either with or without multiplicity for repeated intersections). Since the trivial upper bound on the second moment of boundary components is $O(R^2)$, this is immediate in cases in which the variance bound is of order exceeding $R^2$. While in the general case it does not follow from our stated results, our proofs can easily be adapted to cover boundary components, but for brevity we omit the details.
	\end{remark}
	
	%\smallskip
	In order to extract from Theorem \ref{t:fluctuations general} a concrete statement about the fluctuations of $N_\mathrm{ES}$ and $N_\mathrm{LS}$, one needs to show that the (Dini) derivatives of $c_\mathrm{ES}$ and $c_\mathrm{LS}$ are non-zero for particular levels. In previous work \cite{Beliaev2019smoothness} we proved monotonicity results for $c_\mathrm{ES}$ and $c_\mathrm{LS}$ implying that this condition holds for certain ranges of levels. We illustrate this with the Bargmann-Fock field.
	
	\begin{corollary}\label{c:BF}
		Let $f$ be the Bargmann-Fock field and $D\subset\R^2$ be an open rectangle centred at the origin. There exists $\epsilon>0$ (independent of $D$) such that the following holds. If $\ell\in(-\epsilon,0.64)\cup(1.03,\infty)$ then $(N_\mathrm{ES}(D_R,\ell))_{R\geq0}$ has fluctuations of order at least $R$ and hence variance of order at least $R^2$. If $\lvert\ell\rvert> 1.03$ then $(N_\mathrm{LS}(D_R,\ell))_{R\geq 0}$ has fluctuations of order at least $R$ and hence variance of order at least $R^2$.
	\end{corollary}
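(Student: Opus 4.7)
\textbf{Proof proposal for Corollary \ref{c:BF}.} The plan is to deduce the corollary as a direct application of Theorem \ref{t:fluctuations general}(1), once the Bargmann--Fock field is shown to fit into the framework and once monotonicity of $c_\mathrm{ES}$ and $c_\mathrm{LS}$ on the claimed intervals is invoked from \cite{Beliaev2019smoothness}.

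First I would verify the hypotheses. The Bargmann--Fock covariance $\kappa(x)=e^{-|x|^2/2}$ is smooth, isotropic, positive, and rapidly decaying, and the normalisation \eqref{e:norm} is satisfied (after a trivial rescaling if needed, since $\mathrm{Cov}(\nabla f(0))$ is already a positive multiple of $I_2$). The spectral density is the Gaussian $\rho(t)\propto e^{-2\pi^{2}|t|^{2}}$, which is smooth and strictly positive everywhere, so Assumption \ref{a:spectral mass} holds and moreover $g(r)=\inf_{B(2r)}\rho\to \rho(0)>0$ as $r\to 0$. In particular $R\sqrt{g(1/R)}\asymp R$. The support of $\mu$ is all of $\R^{2}$, so the non-degeneracy condition in Assumption \ref{a:minimal} holds. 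Finally, Assumption \ref{a:four arm decay} for Bargmann--Fock follows from \cite[Corollary 2.12]{Beliaev2019smoothness}, which gives the required four-arm saddle bound under positivity and rapid decay of correlations (the translation from balls $B(R)$ to rectangles $D_R$ is noted in the paragraph following Assumption \ref{a:four arm decay}).

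Next, to invoke Theorem \ref{t:fluctuations general} at a specific level $\ell$ one needs $c_\mathrm{ES}$ (respectively $c_\mathrm{LS}$) to have a positive lower or negative upper Dini derivative at $\ell$. For Bargmann--Fock the absolute continuity of these functions already follows from Theorem \ref{t:integral equality}, and stronger smoothness results are established in \cite{Beliaev2019smoothness}. The monotonicity statements I would cite from \cite{Beliaev2019smoothness} are: (i) $c_\mathrm{ES}$ is strictly increasing on some interval $(-\epsilon,0.64)$, corresponding to the range of levels below the unique maximiser, where small `island' components of $\{f\ge \ell\}$ appear faster than larger ones coalesce; (ii) $c_\mathrm{ES}$ is strictly decreasing on $(1.03,\infty)$, by a Kac--Rice-style comparison of maxima and upper-connected saddles in that regime; (iii) $c_\mathrm{LS}$ is strictly decreasing on $(1.03,\infty)$, via the analogous comparison in Theorem \ref{t:integral equality}, with the case $\ell<-1.03$ following by the $f\mapsto -f$ symmetry of Bargmann--Fock. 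On each of these intervals the relevant Dini derivative has the required sign.

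Putting these pieces together, Theorem \ref{t:fluctuations general}(1) applies at every $\ell$ in the stated ranges, and since $R\sqrt{g(1/R)}\asymp R$ the conclusion of fluctuations of order at least $R$ (and hence variance at least $cR^{2}$) is immediate. The main external inputs, and hence the main obstacles hidden behind this short corollary, are the verification of Assumption \ref{a:four arm decay} for Bargmann--Fock and the explicit monotonicity intervals for $c_\mathrm{ES}$ and $c_\mathrm{LS}$: both are genuinely nontrivial but are already established in \cite{Beliaev2019smoothness}, so the role of the corollary here is essentially to package them together with Theorem \ref{t:fluctuations general}.
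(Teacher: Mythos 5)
Your proposal is correct and follows essentially the same route as the paper: verify Assumptions \ref{a:minimal} and \ref{a:spectral mass} directly (with $g(1/R)$ bounded away from zero so the rate is $R$), cite \cite[Corollary 2.12]{Beliaev2019smoothness} for Assumption \ref{a:four arm decay}, cite the monotonicity/non-vanishing-derivative results of \cite{Beliaev2019smoothness} (the paper uses Proposition 2.21 there) for the stated intervals, and conclude via Theorem \ref{t:fluctuations general}. The extra heuristic explanations you give for why the derivative conditions hold are not needed, since those facts are imported wholesale from the cited reference, exactly as in the paper.
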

	\begin{proof}
		Assumptions \ref{a:minimal} and \ref{a:spectral mass} are trivially satisfied for the Bargmann-Fock field, and \cite[Corollary 2.12]{Beliaev2019smoothness} states that Assumption \ref{a:four arm decay} is also satisfied. The corollary then follows from \cite[Proposition 2.21]{Beliaev2019smoothness}, which states that $c_\mathrm{ES}^\prime(\ell)\neq 0$ and $c_\mathrm{LS}^\prime(\ell)\neq 0$ for the respective levels given above.
	\end{proof}

	For general isotropic fields satisfying some additional assumptions listed below (Assumption \ref{a:Monotonicity}), it is shown in \cite{Beliaev2019smoothness} that $c_\mathrm{ES}$ and $c_\mathrm{LS}$ are monotone for similar ranges of levels, and so we draw similar conclusions for such fields. To be a bit more precise, for each such field there exists $\epsilon>0$ such that the fluctuations of $(N_\mathrm{ES}(D_R,\ell))_{R\geq0}$ are of order at least $R$ for $\ell \in (-\epsilon,C)\cup (\sqrt{2}/\chi,\infty)$ and the fluctuations of $(N_\mathrm{LS}(D_R,\ell))_{R\geq0}$ are of order at least $R$ for $|\ell|>\sqrt{2}/\chi$. Here $\chi$, given by \eqref{e:mon3} below, is a parameter controlling the distribution of critical points \cite{cheng2018expected} and $C$ is the positive root of an explicit but rather complicated equation involving $\chi$, the normal density function and  cumulative density function. 
	
	\begin{assumption}\label{a:Monotonicity}
		The field $f$ satisfies the following:
		\begin{itemize}
			\item $f$ is isotropic (i.e.\ its law is invariant under rotations) and
			\begin{equation}\label{e:mon3}
			\chi:=\frac{-\sqrt{3}\partial^{(2,0)}\kappa(0)}{\sqrt{\partial^{(4,0)}\kappa(0)}}\geq 1.
			\end{equation}
			\item There exist $c,\nu>0$ such that, for all $|x| \ge 1$,
			\begin{displaymath}
			\max_{\lvert \alpha\rvert \le 3} \, \left\lvert \partial^\alpha \kappa(x)\right\rvert \leq c\lvert x\rvert^{-(1+\nu)}.
			\end{displaymath}
			\item The Gaussian vector $(f(0),\nabla^2 f(0))$ is non-degenerate, and for all $x\in\R^2$,
			\begin{equation}
			\label{e:mon}
			\mathbb{E}\left(f(x) \, \middle| \, f(0)=0,\nabla^2 f(0)=\begin{pmatrix}
			1 &0\\
			0 &0
			\end{pmatrix}\right)\geq 0,
			\end{equation}
			\begin{equation}
			\label{e:mon2}
			\mathbb{E}\left(f(x) \, \middle| \, f(0)=1,\nabla^2 f(0)=\begin{pmatrix}
			0 &0\\
			0 &0
			\end{pmatrix}\right)\leq 1.
			\end{equation}
			
			\item For $0<r<R$, let $\mathrm{Arm}_\ell(r,R)$ denote the `one-arm event' that there exists a component of $\{f\geq\ell\}$ which intersects both $\partial B(r)$ and $\partial B(R)$. Then there exist $c_1,c_2>0$ such that for any $1 <r<R$
			\begin{equation}
			\label{e:arm decay}
			\mathbb{P}\left(f\in\mathrm{Arm}_{0}(r,R)\right)\leq c_1(r/R)^{c_2}.
			\end{equation}
		\end{itemize}
	\end{assumption}
	We note that the one-arm decay condition in this assumption has been verified for a wide class of fields \cite{Muirhead2018sharp,rivera2019talagrand} and is believed to hold even more generally. All other parts of this assumption can be verified directly using the covariance function. More detail on how to verify conditions \eqref{e:mon} and \eqref{e:mon2} is given in \cite{Beliaev2019smoothness}. We omit this detail here, since Assumption~\ref{a:Monotonicity} will not be used in the current paper other than for the following corollary.
	\begin{corollary}\label{c:general}
		Let $f$ satisfy Assumptions~\ref{a:minimal},~\ref{a:spectral mass} and~\ref{a:Monotonicity}. Let $D\subset\R^2$ be an open rectangle centred at the origin. There exists $\epsilon>0$ and an explicit constant $C>0$ (both independent of $D$) such that the following holds. If $\ell\in(-\epsilon,C)\cup(2/\chi,\infty)$ then $(N_\mathrm{ES}(D_R,\ell))_{R\geq0}$ has fluctuations of order at least $R$ and hence variance of order at least $R^2$. If $\lvert\ell\rvert>2/\chi$ then $(N_\mathrm{LS}(D_R,\ell))_{R\geq 0}$ has fluctuations of order at least $R$ and hence variance of order at least $R^2$.
	\end{corollary}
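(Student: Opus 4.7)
The plan is to apply Theorem \ref{t:fluctuations general}(1) exactly as in the proof of Corollary \ref{c:BF}, after checking its hypotheses for fields satisfying Assumption \ref{a:Monotonicity} and confirming that the Dini derivatives of $c_\mathrm{ES}$ and $c_\mathrm{LS}$ have the required sign at the indicated levels.

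First, Assumptions \ref{a:minimal} and \ref{a:spectral mass} are granted by hypothesis. Assumption \ref{a:spectral mass} in particular implies $g(r)=\inf_{x\in B(2r)}\rho(x)\geq\inf_V\rho>0$ for all sufficiently small $r$, so $R\sqrt{g(1/R)}\geq cR$ for some $c>0$ and large $R$, and Theorem \ref{t:fluctuations general} will deliver exactly ``fluctuations of order at least $R$, variance of order at least $R^2$'' once its remaining hypotheses are verified. Assumption \ref{a:four arm decay} is provided by \cite[Corollary~2.12]{Beliaev2019smoothness}, whose inputs are the polynomial correlation decay clause and the one-arm decay \eqref{e:arm decay} in Assumption \ref{a:Monotonicity}; this is the same device used to verify the assumption for Bargmann--Fock.

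Second, I would invoke the monotonicity results for $c_\mathrm{ES}$ and $c_\mathrm{LS}$ established in \cite{Beliaev2019smoothness} for general isotropic fields satisfying Assumption \ref{a:Monotonicity}; these play the role here of \cite[Proposition~2.21]{Beliaev2019smoothness} in Corollary \ref{c:BF}. Heuristically, the low-level interval $(-\epsilon, C)$ for excursion sets is produced by combining the conditional-expectation inequalities \eqref{e:mon} and \eqref{e:mon2} with the Morse-theoretic identity of Theorem \ref{t:integral equality} to obtain a signed comparison between densities of local maxima and lower-connected saddles; the explicit constant $C$ is the root of an equation in $\chi$ and the standard normal density and distribution functions worked out there. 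The high-level regime $|\ell|>2/\chi$ is handled by a Kac--Rice comparison that uses \eqref{e:mon3} to show that the Gaussian tail of saddle values decays strictly faster than that of extremum values, so that extrema dominate the representations in Theorem \ref{t:integral equality} and force $c_\mathrm{ES}'$ and $c_\mathrm{LS}'$ to be nonzero in the relevant ranges.

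With both ingredients in place, a single application of Theorem \ref{t:fluctuations general} at each level in the stated range completes the proof. The principal obstacle in this strategy is not the application of Theorem \ref{t:fluctuations general}, which is automatic once the hypotheses are verified, but rather the monotonicity analysis for general isotropic fields; this is somewhat more delicate than for Bargmann--Fock, because Assumption \ref{a:Monotonicity} only provides qualitative information about the covariance, and it is carried out in \cite{Beliaev2019smoothness} rather than reproduced here.
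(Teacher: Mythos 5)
Your proposal is correct and follows essentially the same route as the paper: both reduce the corollary to Theorem~\ref{t:fluctuations general} by citing the differentiability and sign-of-derivative results for $c_\mathrm{ES}$ and $c_\mathrm{LS}$ from \cite[Proposition~2.22]{Beliaev2019smoothness}, with Assumption~\ref{a:four arm decay} supplied by \cite[Corollary~2.12]{Beliaev2019smoothness}. The only point you leave implicit is that the cited derivative result covers only $\ell\in(\sqrt{2}/\chi,\infty)$ for $c_\mathrm{LS}$, so the negative levels $\ell<-2/\chi$ require the symmetry of $c_\mathrm{LS}$ in $\ell$, which the paper's proof invokes explicitly.
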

	\begin{proof}
		Under these assumptions, \cite[Proposition~2.22]{Beliaev2019smoothness} states that $c_\mathrm{ES}$ and $c_\mathrm{LS}$ are continuously differentiable and that
		\begin{displaymath}
		c_\mathrm{ES}^\prime(\ell)\begin{cases}
		>0 &\text{for }\ell\in (-\epsilon,C)\\
		<0 &\text{for }\ell\in\left(\sqrt{2}/\chi,\infty\right)
		\end{cases}
		\end{displaymath}
		and
		\begin{displaymath}
		c^\prime_\mathrm{LS}(\ell)<0\;\text{ for }\ell\in\left(\sqrt{2}/\chi,\infty\right)
		\end{displaymath}
		for $\epsilon,C>0$ as above. Since $c_\mathrm{LS}$ is symmetric in $\ell$, the result then follows from Theorem~\ref{t:fluctuations general}.
	\end{proof}
	The general expression for the constant $C$ in this result is given in the proof of \cite[Proposition~2.22]{Beliaev2019smoothness}. The expression is straightforward to evaluate numerically for any particular field (in particular, it depends only on the first two derivatives of the covariance function at the origin).
	
	\subsubsection{The Random Plane Wave}
	We now turn to the important special case of the Random Plane Wave (RPW), i.e.\ the field with covariance function $\kappa( x)=J_0(\lvert x\rvert)$, where $J_0$ is the \mbox{$0$-th} Bessel function. The RPW has applications in quantum chaos as a model for high energy eigenfunctions of the Laplacian (see \cite{berry1977regular}) and the geometry of its excursion/level sets have been studied by many authors (see~\cite{bogomolny2002percolation,jain2017nodal}). 
	
	%\smallskip
	The RPW does not fall within the scope of Theorem \ref{t:fluctuations general} since it does not satisfy Assumption~\ref{a:spectral mass} (its spectral measure is supported on the unit circle). Nevertheless we can prove the following bound on fluctuations.
	
	\begin{theorem}\label{t:fluctuations RPW}
		Let $f$ be the Random Plane Wave and let $D\subset\R^2$ be an open rectangle centred at the origin. If $c_\mathrm{ES}$ has a positive lower Dini derivative at $\ell\neq 0$ (or a negative upper Dini derivative), then $(N_\mathrm{ES}(D_R,\ell))_{R\geq 0}$ has fluctuations of order at least $R^{3/2}$, and hence variance of order at least $R^3$. The same conclusion holds if we replace $N_\mathrm{ES}$ and $c_\mathrm{ES}$ with $N_\mathrm{LS}$ and $c_\mathrm{LS}$ respectively.
	\end{theorem}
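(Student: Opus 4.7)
The plan is to prove the variance bound $\mathrm{Var}(N_\mathrm{ES}(D_R,\ell)) \gtrsim R^3$; the $R^{3/2}$ fluctuation conclusion then follows by the same Chatterjee-style machinery as in the proof of Theorem \ref{t:fluctuations general}, together with the positive lower Dini derivative hypothesis on $c_\mathrm{ES}$.

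The naive first-chaos strategy is insufficient for the RPW. If one conditions on the linear observable $Y_R := \int_{D_R} f \, dx$, Plancherel gives $\mathrm{Var}(Y_R) = \int_{S^1} |\widehat{\mathds{1}_{D_R}}|^2 \, d\sigma \sim R$, since the support $S^1$ of the spectral measure is spatially separated from the origin where $\widehat{\mathds{1}_{D_R}}$ is concentrated. The law of total variance then gives at most $\mathrm{Var}(\mathbb{E}[N_R \mid Y_R]) \sim R$, and in fact the same order persists for any linear functional of $f$ designed to produce a constant shift on $D_R$: the RPW Cameron--Martin cost of realising such a shift is $\gtrsim R^3$, which dominates the would-be variance gain. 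Hence one must isolate a higher-order contribution.

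The natural higher-order candidate is a quadratic-in-$f$ observable, e.g., $Y := \int_{D_R}(f^2 - 1)\, dx$, whose variance $2 \int\int_{D_R} \kappa(x - y)^2 \, dx\, dy$ is of order $R^3$ because $\kappa^2 = J_0^2 \sim 1/r$ on average at large $r$. The central claim to establish is that $\mathrm{Var}(\mathbb{E}[N_R \mid Y]) \gtrsim R^3$, matching the full variance of $Y$ up to constants. To this end, I would condition on $Y$, apply the local law of large numbers of Theorem \ref{t:main level} in intermediate-scale cells with an effective level shifted by a conditional bias induced by $Y$, and use the positive lower Dini derivative of $c_\mathrm{ES}$ to extract a nontrivial linear-in-$Y$ dependence of the conditional expectation. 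Summing cellwise contributions then gives the $R^3$ variance lower bound.

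The main obstacle is rigorously executing this second-order conditioning. Unlike Theorem \ref{t:fluctuations general}, whose proof exploits local spectral mass near the origin to decouple low-frequency modes, the RPW spectrum is entirely on $S^1$, and the conditional field structure under a quadratic observable is considerably more intricate. A scale-separation argument is still required: inside each intermediate-scale cell the conditional second-chaos bias varies on the wavelength $O(1)$ but has a slowly-varying envelope, and the local LLN must be applied uniformly in this envelope. The Dini derivative, being only one-sided, is handled via finite-difference comparisons $\mathbb{E}[N_R(\ell)] - \mathbb{E}[N_R(\ell + s)]$ (as in Theorem \ref{t:fluctuations general}), which are then fed into Chatterjee's fluctuation framework to conclude. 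The exclusion of $\ell = 0$ is essential: the RPW sign symmetry forces $c_\mathrm{LS}'(0) = 0$, and at the nodal level the cancellation phenomena mentioned in the introduction are expected to destroy the mechanism entirely.
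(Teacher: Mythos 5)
Your diagnosis of why the first-chaos/constant-shift mechanism fails for the RPW is correct and matches the paper's own discussion, but your proposed replacement leaves the central step unexecuted, and it faces obstacles that are not merely technical. The claim $\mathrm{Var}(\mathbb{E}[N_R \mid Y]) \gtrsim R^3$ for $Y = \int_{D_R}(f^2-1)\,dx$ is asserted, not proved: conditioning a Gaussian field on a \emph{quadratic} functional does not yield a Gaussian (or even stationary) conditional law, so Theorem~\ref{t:main level} cannot be applied to the conditioned field "with a shifted effective level", and the asserted linear-in-$Y$ dependence of $\mathbb{E}[N_R\mid Y]$ with coefficient governed by the Dini derivative of $c_\mathrm{ES}$ is precisely the hard part of the problem, not a consequence of known results. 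Moreover, even granting the variance bound, the theorem asserts \emph{fluctuations} of order $R^{3/2}$, which is strictly stronger than a variance lower bound (see the remark after Definition~\ref{d:fluctuations}); Chatterjee's lemma requires both an anticoncentration statement for $|X_R - Y_R|$ and a vanishing total variation bound between $X_R$ and $Y_R$, neither of which your conditioning scheme produces.

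The paper's route is quite different and circumvents all of this while staying entirely within Gaussian computations. One compares $N_\mathrm{ES}(D_R,\ell)$ with $N_\mathrm{ES}(D_R,\ell+a_R)$ for $a_R = \delta_R R^{-1/2}$; the second moment method (Lemmas~\ref{l:fm} and~\ref{l:sm}) shows these differ by order $R^2 a_R \approx R^{3/2}$. The key observation is that, for $\ell \neq 0$, shifting the level is the same as multiplicatively rescaling the field: $N_\mathrm{ES}(f;D_R,\ell+a) = N_\mathrm{ES}\bigl(\tfrac{\ell}{\ell+a}f;D_R,\ell\bigr)$. Since the RPW on $D_R$ is determined, up to a perturbation with bounded effect on the component count (Lemma~\ref{l:RPW perturbation}), by only $O(R)$ i.i.d.\ Gaussian coefficients in the Bessel expansion \eqref{e:exp}, the total variation distance between $f$ and $\tfrac{\ell}{\ell+a_R}f$ is controlled via Pinsker's inequality by the KL divergence between $\mathcal{N}(0,I_{3m})$ and $\mathcal{N}\bigl(0,(\ell/(\ell+a_R))^2 I_{3m}\bigr)$ with $m \approx R$, which is $O(R a_R^2) = O(\delta_R^2) \to 0$. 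This is exactly the input Chatterjee's lemma needs. If you want to salvage your approach, you would need to make the second-chaos projection argument rigorous (in the spirit of Wiener chaos computations for local functionals), but that is a substantially different and harder programme than what you have sketched.
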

	
	\begin{remark}
		The larger fluctuations of $N_\mathrm{ES}$ and $N_\mathrm{LS}$ for the RPW (order $R^{3/2}$ compared to the generic $R$) can be understood as a reflection of degeneracies in the RPW, which manifest in at least four ways. First, the spectral measure $\mu$ is supported on a dimension one subspace (the unit circle). Second, and directly related to the first, is that realisations of the RPW are solutions of the Helmholtz equation $\Delta f = - f$. Third, the RPW has long-range dependence, with correlations decaying only at rate $1/\sqrt{| x|}$. Fourth, when expanded in a particular orthogonal series (see \eqref{e:exp}), only order $R$ terms are required to specify the RPW in a ball of radius $R$, up to exponentially small error, compared to the generic order $R^2$ terms for a planar field. In fact, this last property is what ultimately drives our proof of Theorem~\ref{t:fluctuations RPW}.
	\end{remark}
	
	As for the Bargmann-Fock field, in previous work we verified the Dini-derivative condition for $c_\mathrm{ES}$ and $c_\mathrm{LS}$ in certain intervals (see \cite[Proposition 2.20]{Beliaev2019smoothness}). This leads to the following corollary.
	
	\begin{corollary}\label{c:Variance RPW}
		Let $f$ be the Random Plane Wave and let $D\subset\R^2$ be an open rectangle centred at the origin. If $\ell\in(-\infty,0)\cup(0,0.87)\cup[1,\infty)$ then $(N_\mathrm{ES}(D_R,\ell))_{R\geq0}$ has fluctuations of order at least $R^{3/2}$ and hence variance of order at least $R^3$. If $\lvert \ell\rvert\geq 1$ then $(N_\mathrm{LS}(D_R,\ell))_{R\geq0}$ has fluctuations of order at least $R^{3/2}$ and hence variance of order at least~$R^3$.
	\end{corollary}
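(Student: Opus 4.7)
The proof strategy is to invoke Theorem~\ref{t:fluctuations RPW} directly: the RPW trivially satisfies Assumption~\ref{a:minimal}, so the only remaining task is to verify the sign condition on a Dini derivative of $c_\mathrm{ES}$ (resp.\ $c_\mathrm{LS}$) at each claimed level. This task has already been carried out in our earlier work \cite[Proposition~2.20]{Beliaev2019smoothness}, which for the RPW establishes continuous differentiability of $c_\mathrm{ES}$ and $c_\mathrm{LS}$ together with explicit signed monotonicity on the relevant ranges: $c_\mathrm{ES}'(\ell) > 0$ on $(0, 0.87)$, while $c_\mathrm{ES}'(\ell) < 0$ and $c_\mathrm{LS}'(\ell) < 0$ on $[1, \infty)$. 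Under continuous differentiability, each Dini derivative coincides with the ordinary derivative, so a non-vanishing derivative of the correct sign is precisely the hypothesis of Theorem~\ref{t:fluctuations RPW} and the fluctuation bound of order $R^{3/2}$ follows on these intervals.

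For the remaining ranges (level sets at $\ell \leq -1$, and excursion sets at $\ell < 0$), I would exploit the distributional $f \stackrel{d}{=} -f$ invariance of the RPW. This immediately yields $c_\mathrm{LS}(\ell) = c_\mathrm{LS}(-\ell)$, extending $c_\mathrm{LS}'(\ell) \neq 0$ from $\ell \geq 1$ to $|\ell| \geq 1$. For excursion sets at negative levels, the same symmetry identifies $N_\mathrm{ES}(D_R, \ell)$ for $f$, in distribution, with the number of lower excursion-set components of $f$ at level $-\ell$, equivalently with $N_\mathrm{ES}(D_R, -\ell)$ for the RPW $-f$; applying Theorem~\ref{t:fluctuations RPW} to $-f$ together with the corresponding monotonicity from \cite[Proposition~2.20]{Beliaev2019smoothness} then transfers the fluctuation bound to the negative range.

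I anticipate no technical obstacle: the corollary is essentially a bookkeeping exercise combining Theorem~\ref{t:fluctuations RPW} with previously-established monotonicity of $c_\mathrm{ES}$ and $c_\mathrm{LS}$, mirroring the proofs of Corollaries~\ref{c:BF} and~\ref{c:general}.
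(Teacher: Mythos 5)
Your overall strategy --- apply Theorem~\ref{t:fluctuations RPW} and import the sign of the derivative from \cite[Proposition~2.20]{Beliaev2019smoothness} --- is exactly the paper's (unwritten) proof, and it works as stated for $\ell\in(0,0.87)\cup[1,\infty)$ and, via the evenness of $c_\mathrm{LS}$, for level sets at $\ell\le-1$. However, your treatment of excursion sets at negative levels has a genuine flaw. The map $f\mapsto -f$ sends the upper excursion set $\{f\geq\ell\}$ to the \emph{lower} excursion set $\{-f\leq-\ell\}$, so $N_\mathrm{ES}(f;D_R,\ell)$ is equal in distribution to the number of lower excursion components of $f$ at level $-\ell$, \emph{not} to $N_\mathrm{ES}(D_R,-\ell)$; the latter identification would force $c_\mathrm{ES}$ to be an even function, which it is not (it is unimodal with its maximum at a positive level). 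If you run the reflection correctly, the density functional for lower excursion sets at level $-\ell$ is $c_\mathrm{ES}(\ell)$ again, so the hypothesis you need is still $c_\mathrm{ES}'(\ell)\neq0$ for $\ell<0$ --- the symmetry buys you nothing for excursion sets. And even granting your identification, reflecting the known ranges $(0,0.87)\cup[1,\infty)$ would only yield $(-0.87,0)\cup(-\infty,-1]$, leaving the interval $(-1,-0.87]$ uncovered, whereas the corollary claims all of $(-\infty,0)$.

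The missing ingredient is that \cite[Proposition~2.20]{Beliaev2019smoothness} in fact establishes $c_\mathrm{ES}'(\ell)>0$ on the entire half-line $(-\infty,0.87)$ for the RPW (this is precisely why the RPW corollary covers all negative levels, in contrast with Corollaries~\ref{c:BF} and~\ref{c:general}, where only a small interval $(-\epsilon,0)$ of negative levels is obtained). With that input, Theorem~\ref{t:fluctuations RPW} applies directly at every $\ell\in(-\infty,0)$ and no symmetry argument for excursion sets is needed; the only role of the $f\overset{d}{=}-f$ symmetry is for level sets, where $\{f=\ell\}=\{-f=-\ell\}$ genuinely identifies $N_\mathrm{LS}$ at $\pm\ell$.
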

	
	\subsubsection{Degenerate fields}
	Finally we consider the class of fields whose spectral measure has a delta mass at the origin. In this case, we prove that the variance attains the order of the trivial upper bound in \eqref{e:ub} for all levels.
	
	\begin{assumption}
		\label{a:spectral atom} The stationary Gaussian field $f$ has spectral measure $\nu=\alpha\delta_0+\nu^*$ where $\alpha>0$, $\delta_0$ is a delta-mass at the origin and $\nu^*$ is a (positive) measure. If $g$ is the Gaussian field with spectral measure $\nu^*$ then $g$ satisfies Assumption~\ref{a:minimal}.	
	\end{assumption}
	
	Under this assumption, the field $f$ is no longer normalised to have variance one at a point: instead $\mathrm{Var}(f(x))=1+\alpha$ and $\mathrm{Var}(g(x))=1$ for all $x\in\R^2$. This is motivated by the fact that we can represent $f$ as
	\begin{displaymath}
	f=g+\sqrt{\alpha}Z
	\end{displaymath}
	where $Z$ is a standard Gaussian variable independent of $g$. This representation follows immediately from considering the covariance function of the field on the right. In order to analyse the level sets of $f$, we apply Theorem~\ref{t:main level} to $g$ and consider the additional effect of shifting the overall level due to $Z$. It is therefore convenient to normalise $g$ as in the rest of the paper. Of course, our results apply to any stationary Gaussian field with spectral mass at the origin, one simply has to rescale the variance to match the normalisation above.
	
	\begin{proposition}\label{t:Delta mass zero}
		Let $f$ satisfy Assumption~\ref{a:spectral atom} and let $D\subset\R^2$ be an open rectangle centred at the origin. For each $\ell\in\R$, $(N_\mathrm{ES}(f;D_R,\ell))_{R\geq 0}$ has fluctuations of order at least $R^2$. Moreover, there exist positive constants $c_1(\ell),c_2(\ell)$ (independent of $D$) such that
		\begin{displaymath}
		c_1 \mathrm{Area}(D)^2R^4 <\mathrm{Var}(N_\mathrm{ES}(f;D_R,\ell))< c_2 \mathrm{Area}(D)^2 R^4
		\end{displaymath}
		for all $R>0$ sufficiently large. The same conclusions hold if we replace $N_\mathrm{ES}$ and $c_\mathrm{ES}$ with $N_\mathrm{LS}$ and $c_\mathrm{LS}$ respectively.
	\end{proposition}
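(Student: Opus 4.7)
The key observation is the representation $f = g + \sqrt{\alpha} Z$ with $Z$ a standard Gaussian variable independent of $g$, which yields the pathwise identity
\[ N_\mathrm{ES}(f; D_R, \ell) = N_\mathrm{ES}(g; D_R, \ell - \sqrt{\alpha} Z), \]
and an analogous identity for level sets. My plan is to exploit the extra randomness in $Z$ by conditioning on it: conditionally on $Z$, the count is that of the field $g$ at a (random) shifted level, and since $g$ satisfies Assumption~\ref{a:minimal} I may invoke Theorem~\ref{t:main level}. The $R^2$-scale fluctuations come entirely from the random shift.

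For the variance bounds I would combine the law of total variance with Theorem~\ref{t:main level} applied to $g$ to write
\[ \mathbb{E}[N_\mathrm{ES}(f; D_R, \ell) \mid Z] = c_\mathrm{ES}(\ell - \sqrt{\alpha} Z) \, \mathrm{Area}(D) \, R^2 + E_R(Z), \]
with $|E_R(Z)| \le C R$ almost surely (using the stated uniformity of the $O(R)$ correction in the level). The elementary bound $\mathrm{Var}(X+Y) \ge \mathrm{Var}(X) - 2\sqrt{\mathrm{Var}(X)\mathrm{Var}(Y)}$ applied to the two summands then gives
\[ \mathrm{Var}(N_\mathrm{ES}(f;D_R,\ell)) \ge \mathrm{Var}\bigl(\mathbb{E}[N_\mathrm{ES} \mid Z]\bigr) \ge V \, \mathrm{Area}(D)^2 R^4 - O(R^3), \]
where $V := \mathrm{Var}(c_\mathrm{ES}(\ell - \sqrt{\alpha} Z))$. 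Since $\sqrt{\alpha} Z$ has full Gaussian support, showing $V > 0$ reduces to $c_\mathrm{ES}$ being non-constant, which I would verify by combining $c_\mathrm{ES}(\ell') \to 0$ as $|\ell'| \to \infty$ (Kac-Rice on high local maxima at $+\infty$; percolation of the excursion set at $-\infty$) with $c_\mathrm{ES}(\ell') > 0$ for suitably large $\ell'$ (where distinct high local maxima give rise to disjoint small excursion components). The matching upper bound $\mathrm{Var}(N_\mathrm{ES}) \le c_2 \mathrm{Area}(D)^2 R^4$ is essentially \eqref{e:ub}: $N_\mathrm{ES}$ is dominated by the count of local maxima of $g$ (equivalently $f$) in $D_R$, whose second moment is $O(\mathrm{Area}(D)^2 R^4)$ by a two-point Kac-Rice computation.

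For the fluctuations of order at least $R^2$, I would invoke the $L^1$ convergence in Theorem~\ref{t:main level} (combined with Markov's inequality) to get, for each fixed $\ell'$,
\[ \mathbb{P}\bigl( |N_\mathrm{ES}(g; D_R, \ell')/R^2 - c_\mathrm{ES}(\ell') \mathrm{Area}(D)| > \eta \bigr) \to 0 \text{ as } R \to \infty. \]
Integrating over the law of $Z$ (independent of $g$) and using dominated convergence (the integrand is bounded by $1$) yields the same convergence with $\ell'$ replaced by $\ell - \sqrt{\alpha} Z$. Using continuity and non-constancy of $c_\mathrm{ES}$, I fix $z_1, z_2$ and $\delta > 0$ with $c_\mathrm{ES}(\ell - \sqrt{\alpha} z_1) - c_\mathrm{ES}(\ell - \sqrt{\alpha} z_2) \ge \delta$, together with small neighbourhoods $A_i \ni z_i$ of positive $Z$-probability on each of which $c_\mathrm{ES}(\ell - \sqrt{\alpha} \cdot)$ varies by at most $\delta/4$. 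For $R$ large and $\eta$ small, $N_\mathrm{ES}(f;D_R,\ell)/R^2$ then exceeds $(c_\mathrm{ES}(\ell - \sqrt{\alpha} z_1) - \delta/4) \mathrm{Area}(D) - \eta$ with probability at least $\mathbb{P}(Z \in A_1)/2$, and falls below $(c_\mathrm{ES}(\ell - \sqrt{\alpha} z_2) + \delta/4) \mathrm{Area}(D) + \eta$ with probability at least $\mathbb{P}(Z \in A_2)/2$; these two thresholds are separated by a positive constant multiple of $\mathrm{Area}(D)$. This produces fluctuations of order at least $R^2$ in the sense of Definition~\ref{d:fluctuations}, and the level-set case follows verbatim with $c_\mathrm{LS}$ in place of $c_\mathrm{ES}$. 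The main obstacle is precisely this step: converting pointwise-in-$\ell'$ convergence into a statement after averaging over $Z$ relies on the uniformity of the $O(R)$ term and on a dominated convergence argument, and the non-constancy of $c_\mathrm{ES}$ and $c_\mathrm{LS}$, though geometrically natural, is a separate technical input addressable via the Kac-Rice considerations sketched above.
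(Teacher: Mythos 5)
Your proposal is correct and follows essentially the same route as the paper: both exploit the decomposition $f = g + \sqrt{\alpha}Z$, condition on $Z$, apply the law of large numbers (Theorem~\ref{t:main level}) to $g$ at the shifted level, and derive non-constancy of $c_\mathrm{ES}$ from its positivity at some level together with its decay at infinity, concluding via the two-threshold argument that gives fluctuations of order $R^2$. The only cosmetic difference is that you obtain the variance lower bound separately via the law of total variance, whereas the paper simply deduces it from the fluctuation bound using the general implication recorded in \eqref{e:lbv}.
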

	This result roughly says that adding a random independent level shift to any non-degenerate Gaussian field (which is equivalent to adding a delta mass to the spectral measure at the origin - see above) ensures that the number of excursion/level set components of the resulting field has variance of maximal order at all levels.
	
	We note that this proposition makes no requirement on the derivative of the mean functional at a given level. Intuitively this holds because the variable $Z$ can always shift the field $g$ to levels at which the asymptotic density of excursion (or level) components differs. Although this result follows from the methods we utilise throughout the rest of the paper (to be described in Section~\ref{ss:Outline}) we actually prove it using more elementary techniques. This result is therefore included primarily for completeness and comparison, rather than as a significant contribution.
	
	\begin{remark}
		There are other degenerate classes of fields for which the variance of $N_\mathrm{ES}$ and $N_\mathrm{LS}$ can be shown to be of maximal order $R^4$:
		\begin{enumerate}
			\item For fields which are doubly-periodic (i.e.\ have spectral measure which is supported on a lattice), it is evident that $N_\mathrm{ES}$ and $N_\mathrm{LS}$ have variance of order $R^4$ whenever the variance is finite and non-degenerate (known under minimal assumptions; see Remark~\ref{r:ub}).
			\item For fields with spectral measure supported on exactly four or five points, the entire distributions of $N_\mathrm{ES}$ and $N_\mathrm{LS}$ can be explicitly calculated (see \cite[Proposition~1.20]{Beliaev2018Number} or \cite[Proposition~2.1.11]{mcauley2020excursion}). In this case the variance of $N_\mathrm{ES}$ and $N_\mathrm{LS}$ can also be shown to have order $R^4$ whenever they are non-degenerate.
		\end{enumerate}
	\end{remark}

	\subsection{Further discussion and open questions}
	\label{ss:diss}
	
	In this section we discuss conjectures, open questions, and links to other models.
	
	\subsubsection{Anomalous levels}
	
	As mentioned above, we believe that the variance bounds in Theorems \ref{t:fluctuations general} and \ref{t:fluctuations RPW} are of the correct order for generic levels, with the possible exception of a finite set of `anomalous' levels, different for $c_\mathrm{ES}$ and $c_\mathrm{LS}$, at which the variance is of lower order.
	
	\begin{conjecture}
		\label{c:1}
		Suppose that $f$ satisfies Assumptions~\ref{a:minimal},~\ref{a:spectral mass} and~\ref{a:four arm decay} (e.g., the Bargmann-Fock field) and $D\subset\R^2$ is an open rectangle centred at the origin. Then for all $\ell \in \mathbb{R}$ there exists $c_\mathrm{var}(\ell)>0$ such that 
		\[   \mathrm{Var}(N_\mathrm{ES}(D_R,\ell)) \sim c_\mathrm{var}(\ell)\mathrm{Area}(D) R^2 , \]
		and the same conclusion is true for $N_\mathrm{LS}(D_R, \ell)$.
	\end{conjecture}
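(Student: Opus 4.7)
The lower bound $\mathrm{Var}(N_\mathrm{ES}(D_R,\ell)) \geq c R^2$ already follows from Theorem~\ref{t:fluctuations general}, since under Assumption~\ref{a:spectral mass} the function $g(r) = \inf_{x \in B(2r)}\rho(x)$ is bounded away from zero near the origin. The plan is therefore to prove a matching upper bound $\mathrm{Var}(N_\mathrm{ES}(D_R,\ell)) \leq C R^2$ together with the existence of a precise limit. The key device is a local surrogate for the non-local count: tile $D_R$ by axis-aligned sub-boxes $B_i$ of fixed side length $L \ll R$ and define $N^L_\mathrm{ES} := \sum_i X_i$, where $X_i$ counts the connected components of $\{f \geq \ell\}$ strictly contained in $B_i$. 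Each $X_i$ is measurable with respect to $f|_{B_i}$, and the discrepancy $N_\mathrm{ES}(D_R,\ell) - N^L_\mathrm{ES}$ is dominated by the number of components of $\{f \geq \ell\} \cap D_R$ whose diameter exceeds $L/2$ (hence cross some $\partial B_i$), which should have mean and variance of order $\delta(L) R^2$ with $\delta(L) \to 0$ as $L \to \infty$.

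To bound $\mathrm{Var}(N^L_\mathrm{ES})$, I would combine the covariance decay in Assumption~\ref{a:minimal} with Gaussian regression to obtain a decay estimate on $\mathrm{Cov}(X_i,X_j)$ in the lattice distance $|i-j|$. If this decay is summable, the off-diagonal contribution is $O(R^2)$ and the diagonal contribution is also $O(R^2)$ since $\mathrm{Var}(X_i) \leq C L^4$ by the trivial bound applied at scale $L$. A Cauchy--Schwarz between $N^L_\mathrm{ES}$ and the discrepancy then gives $\mathrm{Var}(N_\mathrm{ES}) \leq C R^2$. For existence of the limit, a subadditivity argument in the spirit of the proof of Theorem~\ref{t:main level} --- partitioning $D_R$ into congruent sub-rectangles and exploiting stationarity together with the covariance decay --- shows that $\mathrm{Var}(N^L_\mathrm{ES}) / (\mathrm{Area}(D) R^2)$ converges to a constant $c^L_\mathrm{var}(\ell)$ as $R \to \infty$. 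Sending $L \to \infty$ and absorbing the surrogate error yields the desired limit $c_\mathrm{var}(\ell)$, whose positivity is immediate from Theorem~\ref{t:fluctuations general}. The same scheme handles $N_\mathrm{LS}$, with level-set components and level-line geometry replacing excursion components.

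The main obstacle is producing the quantitative decay $\delta(L) \to 0$ in the surrogate estimate, uniformly over levels $\ell$. This requires sharp arm-decay estimates for the excursion set: specifically, that the expected number of components of $\{f \geq \ell\} \cap D_R$ with diameter in $[L, 2L]$ is $o(R^2)$ as $L \to \infty$. For the Bargmann--Fock field at $\ell = 0$ such estimates follow from the RSW and OSSS machinery developed in \cite{Muirhead2018sharp, rivera2019talagrand}; for $\ell > 0$ the excursion set should be subcritical with exponentially decaying component sizes, while for $\ell < 0$ it is supercritical and one must instead control the dual hole structure formed by lower excursion components nested inside upper ones. Assumption~\ref{a:four arm decay} addresses only the four-arm saddle count rather than one-arm component sizes, so genuinely new input is needed; a fully uniform treatment in $\ell$, together with the more delicate level-line case for $N_\mathrm{LS}$ (which sits simultaneously at the boundary of the supercritical and subcritical phases), is likely to demand substantial percolation-theoretic advances beyond the current state of the art.
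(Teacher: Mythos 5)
The statement you are attempting to prove is Conjecture~\ref{c:1}: the paper offers no proof of it and explicitly presents it as open, so your proposal should be judged as a strategy sketch rather than checked against an existing argument. As a sketch it follows the natural route (localisation into boxes plus decorrelation), but it has several genuine gaps, some of which you acknowledge and at least two of which you do not. First, the claimed lower bound does \emph{not} ``already follow from Theorem~\ref{t:fluctuations general}'' for all $\ell$: that theorem requires $c_\mathrm{ES}$ to have a non-zero (Dini) derivative at $\ell$, which fails at critical points of $c_\mathrm{ES}$ (and at $\ell=0$ for $c_\mathrm{LS}$ by symmetry). The conjecture asserts $c_\mathrm{var}(\ell)>0$ for \emph{every} $\ell$, and the paper's own discussion makes clear that positivity at such levels is precisely part of what is open. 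Second, and more seriously, your control of the discrepancy $N_\mathrm{ES}(D_R,\ell)-N^L_\mathrm{ES}$ is asserted for both mean and variance, but only the mean bound $O(R^2/L)$ is accessible (via crossing counts of the grid). The variance of the discrepancy is exactly the non-locality problem the paper highlights: the trivial second-moment bound on the number of boundary-crossing components is $O(R^4/L^2)$, so Cauchy--Schwarz against $N^L_\mathrm{ES}$ produces an error of order $R^3/L$, which swamps the target $R^2$. Making $\delta(L)R^2$ a genuine \emph{variance} bound on the discrepancy is essentially equivalent in difficulty to the conjecture itself.

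Two further steps are stated as if routine but are not. The decay of $\mathrm{Cov}(X_i,X_j)$ cannot be obtained from ``covariance decay plus Gaussian regression'': the $X_i$ are non-linear, non-local functionals of $f|_{B_i}$, and converting field decorrelation into summable covariance decay for component counts requires either a chaos-type expansion (unavailable here, as the paper notes) or quantitative coupling/clustering estimates that currently yield only the weak concentration bounds of \cite{rivera2017quasi,Beliaev2018Cov}. Likewise, the ``subadditivity argument in the spirit of Theorem~\ref{t:main level}'' applies to first moments via ergodicity; variances of cluster counts are neither subadditive nor superadditive, and even in Bernoulli percolation the $R^2$ variance asymptotic for the number of clusters requires a genuine CLT-type argument, not a soft limiting one. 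Your closing paragraph correctly identifies that uniform-in-$\ell$ arm/diameter estimates are missing and that Assumption~\ref{a:four arm decay} does not supply them; combined with the points above, the proposal is a reasonable research programme but not a proof, consistent with the statement's status as a conjecture.
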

	
	\begin{conjecture}
		\label{c:2}
		Suppose that $f$ satisfies Assumptions \ref{a:minimal} and \ref{a:spectral mass}, and assume that there exist $\alpha \in (0, 2)$ and $r_0 > 0$ such that 
		\[\rho(x)= \lvert x\rvert^{-\alpha}
		\] 
		for all $\lvert x\rvert < r_0$. Let $D\subset\R^2$ be an open rectangle centred at the origin. Then there exists a (possibly empty) finite set $\mathcal{L} \subset \mathbb{R}$ and $c_\mathrm{var}(\ell)>0$ such that, for all $\ell \notin \mathcal{L}$,
		\[   \mathrm{Var}(N_\mathrm{ES}(D_R,\ell)) \sim  c_\mathrm{var}(\ell)\mathrm{Area}(D)^\frac{2+\alpha}{2} R^{2+\alpha} \]
		whereas for all $\ell \in \mathcal{L}$,
		\[   \mathrm{Var}(N_\mathrm{ES}(D_R,\ell)) \ll R^{2+\alpha} ,\]
		and the same conclusion is true for $N_\mathrm{LS}(D_R, \ell)$ (with a different set $\mathcal{L}$). If $f$ is the RPW, then the same conclusion is true with $2+\alpha$ replaced with~$3$.
	\end{conjecture}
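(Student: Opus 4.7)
The plan is to isolate the low-frequency part of $f$ as the source of the $R^{2+\alpha}$-order fluctuations, treating the high-frequency remainder as a ``generic'' field whose contribution to the variance is of the strictly smaller order $R^2$. Fix $r_1 \in (0, r_0)$ and decompose $f = f_L + f_H$, where $f_L$ and $f_H$ are independent stationary Gaussian fields with spectral densities $\rho(x)\id_{|x|<r_1}$ and $\rho(x)\id_{|x|\geq r_1}$ respectively. The spectral singularity is entirely captured by $f_L$, whose covariance function decays at infinity like $|x|^{\alpha-2}$ by standard Abel--Tauberian arguments (see Remark~\ref{r:heavytail}); in contrast $f_H$ has integrable covariance and is a ``short-range'' field of the type to which the methods of~\cite{SodinNazarov2015asymptotic} and the present paper apply.

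The heart of the argument is the conditional approximation
\[
\mathbb{E}[N_\mathrm{ES}(D_R,\ell) \mid f_L] = \int_{D_R} c_\mathrm{ES}(\ell - f_L(x))\, dx + o(R^{(2+\alpha)/2}) \qquad \text{in } L^2,
\]
which I would establish via a mesoscopic partition of $D_R$ into squares $Q_j$ of side length $L_R$ chosen so that $L_R \to \infty$ (so that Theorem~\ref{t:main level} is applicable on each $Q_j$), but $L_R$ is much smaller than the modulus-of-continuity scale of $f_L$ (so that $f_L$ is effectively constant on each $Q_j$). Conditioning on $f_L(x_j)$ at a centre $x_j \in Q_j$, the restriction $f|_{Q_j}$ is approximately $f_H + f_L(x_j)$ in law, and Theorem~\ref{t:main level} applied to $f_H$ at level $\ell - f_L(x_j)$ yields $\mathbb{E}[N_\mathrm{ES}(Q_j,\ell)\mid f_L] \approx c_\mathrm{ES}(\ell - f_L(x_j)) \mathrm{Area}(Q_j)$. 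Summing and controlling the difference between $N_\mathrm{ES}(D_R,\ell)$ and $\sum_j N_\mathrm{ES}(Q_j,\ell)$ via a four-arm estimate in the spirit of Assumption~\ref{a:four arm decay} then yields the stated approximation.

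Given this approximation, the total variance decomposition gives
\[
\mathrm{Var}(N_\mathrm{ES}(D_R,\ell)) = \mathrm{Var}\!\left( \int_{D_R} c_\mathrm{ES}(\ell - f_L(x))\, dx \right) + \mathbb{E}[\mathrm{Var}(N_\mathrm{ES}(D_R,\ell) \mid f_L)] + o(R^{2+\alpha}),
\]
and a Taylor expansion of the integrand, using continuous differentiability of $c_\mathrm{ES}$, gives the leading-order term
\[
(c_\mathrm{ES}'(\ell))^2 \int_{D_R}\!\int_{D_R} \mathrm{Cov}(f_L(x),f_L(y))\,dx\,dy \sim C(\alpha)(c_\mathrm{ES}'(\ell))^2 \mathrm{Area}(D)^{(2+\alpha)/2} R^{2+\alpha}.
\]
This identifies the anomalous set as $\mathcal{L} = \{\ell : c_\mathrm{ES}'(\ell)=0\}$, which I would argue is finite by appealing to a presumed real-analyticity of $c_\mathrm{ES}$---a smoothness property supported by but not yet established from the representation in Theorem~\ref{t:integral equality}. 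At an anomalous $\ell$, the quadratic Taylor term takes over, and a Wick-formula calculation shows its variance is of order at most $R^{2\alpha}\vee R^2\log R$, which is strictly smaller than $R^{2+\alpha}$ for every $\alpha \in (0,2)$.

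The main obstacle is controlling the residual term $\mathbb{E}[\mathrm{Var}(N_\mathrm{ES}(D_R,\ell)\mid f_L)]$, which amounts to a conditional version of Conjecture~\ref{c:1}: for typical frozen realisations of $f_L$ one must establish the generic-field variance bound $O(R^2)$ for $N_\mathrm{ES}$ of the short-range field $f_H$ with the slowly varying level $\ell - f_L(\cdot)$. No such unconditional matching upper bound is presently known, and obtaining it appears to require a substantial new idea beyond the methods available here. For the RPW case the same scheme applies after replacing $f_L$ with the projection of the RPW onto the first $O(R)$ modes of its Fourier--Bessel expansion in $D_R$; because the RPW in a disc of radius $R$ is effectively determined by only $O(R)$ coefficients, the analogue of $\iint \mathrm{Cov}(f_L,f_L)$ scales like $R^3$ rather than $R^{2+\alpha}$, matching the conjectured variance, and the anomalous set is characterised analogously.
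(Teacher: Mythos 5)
You are being asked to prove Conjecture~\ref{c:2}, which the paper itself presents as open and does not prove; what the paper actually establishes (Theorem~\ref{t:fluctuations general}) is only the \emph{lower} bound, namely that under the Dini-derivative condition the fluctuations are of order at least $R\sqrt{g(1/R)}\approx R^{(2+\alpha)/2}$ and hence the variance is of order at least $R^{2+\alpha}$. Your proposal is a sensible heuristic for why the conjecture should hold, and it is broadly consistent with the discussion in Section~\ref{ss:diss}, where the anomalous levels are attributed to the vanishing of the coefficient of the dominant chaos component; but it is not a proof, and you correctly identify the fatal gap yourself. The residual term $\mathbb{E}[\mathrm{Var}(N_\mathrm{ES}(D_R,\ell)\mid f_L)]$ must be shown to be $o(R^{2+\alpha})$, which is a conditional form of Conjecture~\ref{c:1} and is itself open: for general fields no upper bound on $\mathrm{Var}(N_\mathrm{ES})$ better than the trivial $O(R^4)$ of~\eqref{e:ub} is known. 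Without that input, neither the asymptotic $\sim c_\mathrm{var}(\ell)R^{2+\alpha}$ at generic levels nor the bound $\ll R^{2+\alpha}$ at anomalous levels can be concluded; at best your scheme would recover the lower bound the paper already proves.

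There are two further concrete problems with the intermediate steps. First, a scale incompatibility: with a fixed spectral cutoff $r_1$, the low-frequency field $f_L$ varies on the fixed spatial scale $1/r_1=O(1)$, so it cannot be ``effectively constant'' on mesoscopic squares of diverging side $L_R$ while simultaneously $L_R\to\infty$ as required to apply Theorem~\ref{t:main level} on each square. You would need an $R$-dependent cutoff $r_1(R)\to 0$ with $L_R\ll 1/r_1(R)$, making $f_H=f_H^{(R)}$ a varying family of fields for which the $O(R)$ error in Theorem~\ref{t:main level} and the four-arm estimates of Assumption~\ref{a:four arm decay} (verified in the literature only for fields with positive, rapidly decaying correlations, not for band-pass fields with oscillating covariance) are not known to hold uniformly. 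Second, the finiteness of $\mathcal{L}=\{\ell: c_\mathrm{ES}'(\ell)=0\}$ rests on a presumed real-analyticity of $c_\mathrm{ES}$ that is not available: the paper knows only absolute continuity in general, and continuous differentiability under additional hypotheses. Each of these would need to be resolved before the strategy could be called a proof.
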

	
	%\smallskip
	These conjectures are motivated by a comparison with the known behaviour of the variance of the \textit{Minkowski functionals} of the excursion sets, namely the volume of the excursion set, the length of the level set, and the Euler characteristic of the excursion set (by Hadwiger's theorem, these form a linear basis for the set of `local' functionals of the excursion sets that are isometrically invariant \cite{RFG}). To illustrate this, let $L(R, \ell)$ denote the length of the level set $\{f = \ell\}$ contained within $[-R,R]^2$. It is known \cite{kratz18}, that for fields with rapid correlation decay, there exists $c_\mathrm{var}(\ell)>0$ such that
	\[ \mathrm{Var}(L(R, \ell)) \sim c_\mathrm{var}(\ell) R^2  , \]
	whereas for the RPW
	\begin{equation}
	\mathrm{Var}(L(R, \ell)) \sim  \begin{cases} 
	c_\mathrm{var}(\ell) R^3 & \text{for } \ell \neq 0 , \\
	c_\mathrm{var}(0) R^2 \log R & \text{for } \ell = 0 .
	\end{cases}
	\end{equation}
	In other words, for the RPW, $L(R, \ell)$ has variance of lower order at level $\ell = 0$ compared to $\ell \neq 0$. This phenomenon was first predicted by Berry \cite{berry2002statistics}, and has since been proven rigorously \cite{wigman2010fluctuations, npr}. A similar phenomenon is also known to occur for the volume of the excursion sets and the Euler characteristic (see \cite{mw11b, cammarota2016}); in the latter case the variance reduction also occurs at certain non-zero levels.
	
	%\smallskip
	The phenomenon of variance reduction can be understood as reflecting the fact that, for the RPW, the fluctuations of the Minkowski functionals are dominated by the second term in their Wiener chaos expansion, whose coefficient as a function of $\ell$ happens to vanish at certain levels (see the discussion in \cite{npr}). The same is also known to be true in the case of spectral singularity at the origin \cite[Chapter~3]{leo99}. In contrast, for fields with rapid correlation decay, many terms in the Wiener chaos expansion have fluctuations of leading order (see for instance \cite{esle16}), and so one should not expect anomalous levels since that would require many coefficients to vanish simultaneously.
	
	\subsubsection{Further questions}
	Assuming that Conjectures \ref{c:1} and \ref{c:2} are correct, they give rise to a number of further questions. For simplicity we discuss only the case of the excursion sets, but the analogous questions can be asked of the level sets. 
	
	%\smallskip
	A first set of questions concerns the anomalous levels $\mathcal{L}$ in the case of the RPW or fields with spectral singularity.
	
	\begin{samepage}
		\begin{question}\leavevmode
			\begin{enumerate}
				\item Is the set of anomalous levels $\mathcal{L}$ non-empty? What is its cardinality?
				\item Let $\mathcal{C}$ denote the set of critical points of the density functional $c_\mathrm{ES}$. By Theorems \ref{t:fluctuations general} and \ref{t:fluctuations RPW}, we know that $\mathcal{L} \subseteq \mathcal{C} \cup \{0\}$ for the RPW, whereas $\mathcal{L} \subseteq \mathcal{C}$ in the case of spectral singularity case. Are these containments strict?  
				\item What is the order of $\mathrm{Var}(N_\mathrm{ES}(D_R,\ell))$ for $\ell \in \mathcal{L}$? Does it depend on the field and on the level? Is it always of order at least $R^2$?
			\end{enumerate}
		\end{question}
	\end{samepage}
	
	Based on simulations we expect that $c_\mathrm{ES}$ is unimodal for general fields, which would imply that $|\mathcal{C}| = 1$ and so $|\mathcal{L}| \le 1$ (or $|\mathcal{L}| \le 2$ for the RPW). On the other hand we expect that $c_\mathrm{LS}$ is either unimodal or bimodal, depending on the field, which would imply that $|\mathcal{L}| \le 3$.
	
	%\smallskip
	A second question concerns the constants $c_\mathrm{var}(\ell)$ for generic levels $\ell \notin \mathcal{L}$. For the Minkowski functionals of the RPW, it is known that $c_\mathrm{var}(\ell)$ is related to the derivative of the first moment (i.e.\ density) functional $c(\ell)$ via 
	\begin{equation}
	\label{e:der}
	c_\mathrm{var}(\ell) \propto (\ell \, c'(\ell))^2 ; 
	\end{equation}
	see the formulas and discussion presented in \cite{cammarota2016, cammarota2018} (actually \eqref{e:der} has only been proven for the related model of the random spherical harmonics, but we expect it to hold also for the RPW). In particular, levels are anomalous precisely when either $\ell = 0$ or $c'(\ell) = 0$, which are exactly the conditions for which our bound in Theorem \ref{t:fluctuations RPW} hold. This is evidence that our conditions in Theorem \ref{t:fluctuations RPW} are quite natural.
	
	%\smallskip
	We are not aware of any similar results to \eqref{e:der} for the Minkowski functionals of general fields, and indeed in general it is difficult to compute the value of $c_\mathrm{var}$ exactly (even if the density $c(\ell)$ is well-understood for Minkowski functionals~\cite{RFG}). It would be interesting to know if~\eqref{e:der}, or a similar relationship, holds in more generality.
	
	\begin{question}
		What is the relationship between $c_\mathrm{var}(\ell)$ and the derivative of the density functional $c'_\mathrm{ES}(\ell)$? Is $c_\mathrm{var}(\ell) \propto (\ell \, c'_\mathrm{ES}(\ell))^2$ for the RPW?
	\end{question}
	
	The third question involves the asymptotic distribution of the fluctuations of $N_\mathrm{ES}(D_R, \ell)$. For the Minkowski functionals these are known to be Gaussian in many cases (see, e.g., \cite{mw11, esle16, cammarota2016, muller17, kratz18}). Non-Gaussian limit theorems have also been observed in the case of spectral singularity at the origin \cite{leo99, slud94}.
	
	\begin{question}  
		Does $N_\mathrm{ES}(D_R,\ell)$ have asymptotically Gaussian fluctuations? Does it depend on the field and on the level?
	\end{question}
	
	\subsubsection{Comparison to percolation models}
	Recent work has established that, in many cases, the geometry of Gaussian excursion/level sets exhibits the same behaviour as the `clusters' in discrete percolation models \cite{1996boundedness, beffara2017percolation,rivera2017critical,Muirhead2018sharp}; in particular, this is known for Gaussian fields whose correlations are positive and rapidly decaying, and has been conjectured by Bogomolny and Schmit to be true for the RPW \cite{bogomolny2002percolation}. It is therefore of interest to compare our results to what is known for percolation models.
	
	%\smallskip
	For Bernoulli percolation on $\mathbb{Z}^2$ with connection probability $p \in (0,1)$ (see \cite{grimmett1999percolation} for background on this and other percolation models), it is known that the variance of the number of clusters in the square of side-length $R$ is of order exactly $R^2$. This matches the order of our lower bound on $\mathrm{Var}(N_\mathrm{ES}(D_R,\ell))$ for Gaussian fields with positive spectral measure and rapid correlation decay, but is inconsistent with our bounds in the case of the RPW or fields with spectral singularity. In particular, our results are inconsistent with some of the stronger claims of the Bogolmony-Schmit conjecture \cite{bogomolny2002percolation,bogomolny2007random}, which imply that the variance of $N_\mathrm{ES}(D_R,\ell)$ and $N_\mathrm{LS}(D_R, \ell)$ are of order $R^2$ for the RPW when $\ell$ is close to zero. On the other hand, the most important case of the Bogolmony-Schmit conjecture is the critical case, which posits that the nodal set $\{f = 0 \}$ of the RPW has statistics that match critical Bernoulli percolation ($p = 1/2$). Unfortunately our results do not cover this case.
	
	\begin{question}  
		What is the order of $\mathrm{Var}(N_\mathrm{LS}(D_R,0))$ and $\mathrm{Var}(N_\mathrm{ES}(D_R,0))$ for the RPW? Does it agree with the Bogolmony-Schmit prediction of order $R^2$?
	\end{question}
	
	%%%%%%%%%
	
	\subsection{Outline of the method}\label{ss:Outline}
	In this section we give an outline of the proofs of our main results (Theorems \ref{t:fluctuations general} and \ref{t:fluctuations RPW}). For clarity we focus only on the bounds for $N_\mathrm{ES}$; the proof for $N_\mathrm{LS}$ is near identical. 
	
	%\smallskip
	The foundation of our arguments is a versatile, elementary lemma due to Chatterjee.
	
	\begin{lemma}[{\cite[Lemma~1.2]{chatterjee2017general}}]\label{l:chatterjee}
		Let $X$ and $Y$ be random variables defined on the same probability space. Then, for real numbers $a\leq b$,
		\begin{displaymath}
		\mathbb{P}(a\leq X\leq b)\leq \frac{1}{2}\left(1+\mathbb{P}\left(\lvert X-Y\rvert\leq b-a\right)+d_\mathrm{TV}(X,Y)\right),
		\end{displaymath}
		where $d_\mathrm{TV}$ denotes the total variation distance between the distributions of $X$ and $Y$.
	\end{lemma}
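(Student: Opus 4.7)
The plan is to exploit two simple facts: (i) if both $X$ and $Y$ lie in an interval of length $b-a$ then $|X-Y|\le b-a$, and (ii) the total variation distance controls how much the probability of any event can differ when measured under the laws of $X$ versus $Y$. Combining these two observations with inclusion-exclusion on the relevant events should yield the bound directly; there is no genuine obstacle here, just bookkeeping.

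Concretely, I would set $A:=\{a\le X\le b\}$ and $A':=\{a\le Y\le b\}$, both defined on the common probability space. The first key step is the pointwise inclusion $A\cap A'\subseteq\{|X-Y|\le b-a\}$, which follows immediately because if $X,Y\in[a,b]$ then $|X-Y|\le b-a$. Taking probabilities and applying the identity $\mathbb{P}(A)+\mathbb{P}(A')=\mathbb{P}(A\cup A')+\mathbb{P}(A\cap A')$ together with $\mathbb{P}(A\cup A')\le 1$ gives
\[
\mathbb{P}(A)+\mathbb{P}(A')\le 1+\mathbb{P}(|X-Y|\le b-a).
\]

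The second key step is to replace $\mathbb{P}(A')$ by $\mathbb{P}(A)$ at the cost of the total variation distance. Since $A'$ is an event determined by $Y$ and the pushforward of $Y$ differs from that of $X$ by at most $d_\mathrm{TV}(X,Y)$ on any Borel set, we have
\[
\mathbb{P}(A')\ge \mathbb{P}(a\le X\le b)-d_\mathrm{TV}(X,Y)=\mathbb{P}(A)-d_\mathrm{TV}(X,Y).
\]
Here I am using the standard characterization $d_\mathrm{TV}(X,Y)=\sup_B|\mathbb{P}(X\in B)-\mathbb{P}(Y\in B)|$ with $B=[a,b]$.

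Substituting the second display into the first and dividing by two gives
\[
\mathbb{P}(A)\le \tfrac{1}{2}\bigl(1+\mathbb{P}(|X-Y|\le b-a)+d_\mathrm{TV}(X,Y)\bigr),
\]
which is exactly the claimed inequality. The only subtlety worth flagging is that the statement requires $X$ and $Y$ to be defined on the same probability space, which is essential for the event $\{|X-Y|\le b-a\}$ to make sense and for the pointwise inclusion in the first step to apply; the total variation bound, by contrast, depends only on the marginal laws.
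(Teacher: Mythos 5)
Your proof is correct and is essentially the same argument as in Chatterjee's cited proof: the paper itself only quotes the lemma from \cite{chatterjee2017general}, and the standard proof there is precisely your combination of the inclusion $\{a\le X\le b\}\cap\{a\le Y\le b\}\subseteq\{|X-Y|\le b-a\}$, inclusion--exclusion, and the total variation bound applied to the Borel set $[a,b]$. Nothing is missing.
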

	
	We combine this with the following definition:
	
	\begin{definition}\label{d:differ}
		Let $X_n$ and $Y_n$ be sequences of random variables defined on the same probability space and let $u_n$ be a sequence of positive real numbers. We say that $X_n$ and $Y_n$ differ by order at least $u_n$ if there exist constants $c_1,c_2>0$ such that
		\begin{displaymath}
		\mathbb{P}\left(\lvert X_n-Y_n\rvert\geq c_1u_n\right)\geq c_2
		\end{displaymath}
		for all $n$ sufficiently large.
	\end{definition}

	\begin{corollary}
		\label{c:chat}
		Let $X_n$ and $Y_n$ be sequences of random variables defined on the same probability space and let $u_n$ be a sequence of positive numbers. If $X_n$ and $Y_n$ differ by order at least $u_n$ and $d_\mathrm{TV}(X_n,Y_n)\to 0$ as $n\to\infty$, then $X_n$ has fluctuations of order at least $u_n$.
	\end{corollary}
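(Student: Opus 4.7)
The plan is to apply Chatterjee's Lemma (Lemma~\ref{l:chatterjee}) directly, choosing the auxiliary random variable in that lemma to be $Y_n$. The hypothesis supplies exactly the two ingredients needed to make the right-hand side of the bound in Lemma~\ref{l:chatterjee} strictly bounded away from $1$ uniformly in a window of width $\asymp u_n$.

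More concretely, unpack the hypothesis that $X_n$ and $Y_n$ differ by order at least $u_n$: there are constants $c_1', c_2' > 0$ such that $\mathbb{P}(|X_n - Y_n| \ge c_1' u_n) \ge c_2'$ for all large $n$. I set $c_1 := c_1'/2$ so that whenever $b - a \le c_1 u_n$ one has the inclusion of events $\{|X_n - Y_n| \le b - a\} \subseteq \{|X_n - Y_n| < c_1' u_n\}$, and hence
\[
\mathbb{P}(|X_n - Y_n| \le b - a) \le 1 - c_2'.
\]
Substituting this together with the total-variation assumption into Lemma~\ref{l:chatterjee} yields
\[
\mathbb{P}(a \le X_n \le b) \le \frac{1}{2}\bigl(1 + (1-c_2') + d_\mathrm{TV}(X_n, Y_n)\bigr) = 1 - \frac{c_2'}{2} + \frac{1}{2} d_\mathrm{TV}(X_n, Y_n).
\]

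To finish, I use $d_\mathrm{TV}(X_n,Y_n)\to 0$ to absorb the last term: for $n$ large enough that $d_\mathrm{TV}(X_n,Y_n) \le c_2'/2$, the bound becomes $\mathbb{P}(a \le X_n \le b) \le 1 - c_2'/4$. Setting $c_2 := c_2'/4$ then matches Definition~\ref{d:fluctuations} exactly, completing the proof. There is no real obstacle here; the corollary is essentially a clean packaging of Chatterjee's inequality with quantifiers aligned to the definitions of \emph{differ by order at least $u_n$} and \emph{fluctuations of order at least $u_n$}, and the only mild subtlety is ensuring the strict/non-strict inequalities in the event inclusions are handled (which is why I take $c_1$ strictly smaller than $c_1'$).
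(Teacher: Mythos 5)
Your proof is correct and is exactly the intended argument: the paper states Corollary~\ref{c:chat} without proof as an immediate consequence of Lemma~\ref{l:chatterjee} applied with $Y=Y_n$, combined with Definitions~\ref{d:fluctuations} and~\ref{d:differ}, which is precisely what you carry out. Your care with the strict versus non-strict inequality (taking $c_1 < c_1'$) is a correct and appropriate touch.
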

	
	We will apply Corollary~\ref{c:chat} with $X_R = N_\mathrm{ES}(D_R, \ell)$ and $Y_R = N_\mathrm{ES}(D_R, \ell+a_R)$ for a certain sequence $a_R\to 0$ as $R\to\infty$. There are two competing requirements on $a_R$: (i) $a_R$ should decay slowly enough that $N_\mathrm{ES}(D_R, \ell)$ and $N_\mathrm{ES}(D_R, \ell+a_R)$ differ by a large order; and (ii) $a_R$ must decay quickly enough that ${d_\mathrm{TV}(N_\mathrm{ES}(D_R, \ell), N_\mathrm{ES}(D_R, \ell+a_R))}$ tends to zero. 
	%\smallskip
	Let us consider first the order by which $N_\mathrm{ES}(D_R, \ell)$ and $N_\mathrm{ES}(D_R, \ell+a_R)$ differ. Using the assumption that $c_\mathrm{ES}$ has non-zero (Dini-)derivative at $\ell$, we show in Lemma~\ref{l:fm} that
	\begin{displaymath}
	\lvert \mathbb{E}(N_\mathrm{ES}(D_R,\ell)-N_\mathrm{ES}(D_R,\ell+a_R))\rvert\gtrsim R^2a_R.
	\end{displaymath}
	Using a bound on the second moment of the number of critical points in a shrinking height window from \cite{muirhead2019second}, (proven using the Kac-Rice theorem) we then show in Lemma~\ref{l:sm} that
	\begin{displaymath}
	\left(\mathbb{E}\left[(N_\mathrm{ES}(D_R,\ell)-N_\mathrm{ES}(D_R,\ell+a_R))^2\right]\right)^{1/2}\lesssim R^2 a_R.
	\end{displaymath}
	Since these bounds are of the same order, the second moment method implies that $N_\mathrm{ES}(D_R,\ell)$ and $N_\mathrm{ES}(D_R,\ell+a_R)$ differ by order at least $R^2 a_R$ (see Proposition~\ref{p:smm}).
	
	%\smallskip
	The next step is to bound the total variation distance between $N_\mathrm{ES}(D_R, \ell+a_R)$ and $N_\mathrm{ES}(D_R, \ell)$. Our arguments in this step are different for general fields and for the RPW.
	
	%\smallskip
	For general fields (i.e.\ those satisfying the conditions of Theorem~\ref{t:fluctuations general}), our approach is to view the random variable $N_\mathrm{ES}(D_R,\ell+a_R)$ as the number of excursion sets of the field $f-a_R$ at level $\ell$, and so
	\begin{displaymath}
	d_\mathrm{TV}(N_\mathrm{ES}(D_R,\ell),N_\mathrm{ES}(D_R,\ell+a_R))\leq d_\mathrm{TV}(f,f-a_R).
	\end{displaymath}
	A Cameron-Martin argument then gives an upper bound on this distance in terms of the norm of an (approximately) constant function in the reproducing kernel Hilbert space induced by the field (see \eqref{e:rkhs} for the definition of this Hilbert space). By bounding this norm in terms of the behaviour of the spectral measure at the origin, we can prove that the total variation distance $d_\mathrm{TV}(N_\mathrm{ES}(D_R,\ell),N_\mathrm{ES}(D_R,\ell+a_R)) \to 0$ provided that
	\[ a_R  \ll  \sqrt{g(1/R)}/R .\]
	Combining this with the previous step, we deduce a fluctuation bound of order 
	\[ R^2 a_R \approx  R \sqrt{g(1/R)}. \]
	
	%\smallskip
	In the case of the RPW (Theorem \ref{t:fluctuations RPW}), the previous approach fails since non-zero constant functions cannot be approximated in the reproducing kernel Hilbert space of the RPW (which consists of solutions to the Helmholtz equation $\Delta f = -f$). Instead, our approach is to view $N_\mathrm{ES}(D_R,\ell+a_R)$ as the number of excursion sets of the field $\ell/(\ell+a_R)f$ at level $\ell$ (note that this only holds for $\ell\neq 0$, which is why the nodal level is excluded from our results on the RPW), and so
	\begin{equation}
	\label{e:tvrpw}
	d_\mathrm{TV}(N_\mathrm{ES}(D_R,\ell),N_\mathrm{ES}(D_R,\ell+a_R))\leq d_\mathrm{TV} \left(f,\frac{\ell}{\ell+a_R}f \right).
	\end{equation}
	Using an orthogonal expansion for the RPW in terms of Bessel functions \eqref{e:exp}, we show that the topological behaviour of the RPW on $D_R$ is essentially determined by $4R$ i.i.d.\ standard Gaussian variables. Pinsker's inequality therefore allows us to bound \eqref{e:tvrpw} in terms of the Kullback-Leibler divergence from one Gaussian vector to another.  We recall that, for two probability measures $P,Q$ such that $P$ is absolutely continuous with respect to $Q$, the Kullback-Leibler divergence from $Q$ to $P$ is defined as
	\begin{equation}\label{e:KL definition}
	d_\mathrm{KL}(P\;||\;Q)=\int_\Omega\log\left(\frac{dP}{dQ}\right)\;dP
	\end{equation}
	where $\Omega$ is the sample space of $P$ and $\frac{dP}{dQ}$ is the Radon-Nikodym derivative of $P$ with respect to $Q$. This quantity can be computed explicitly for Gaussian vectors, and as a result we show that $d_\mathrm{TV}(N_\mathrm{ES}(D_R,\ell),N_\mathrm{ES}(D_R,\ell+a_R)) \to 0$ provided
	\[ a_R  \ll  1/\sqrt{R} .\]
	Combining this with the previous step, we deduce a fluctuation bound of order 
	\[ R^2 a_R \approx  R^{3/2} . \]
	
	%\smallskip
	In both cases the main technical step is to ensure that the approximations (in the general case, approximating the constant function inside the reproducing kernel Hilbert space, and for the RPW, truncating the orthogonal expansion) do not radically change the number of excursion set components. To achieve this we apply Morse theory arguments to bound the change by the number of `quasi-critical points', which we can control with local computations (see the proofs of Lemmas \ref{l:Spectral density perturbation} and \ref{l:RPW perturbation} in Section \ref{s:Perturbation}).
	
	%\smallskip
	Note that the requirement that $c_\mathrm{ES}$ has non-zero (Dini)-derivative is seemingly crucial to this method. In particular, it is not possible to obtain a (weaker) lower bound on the fluctuations in the case that $c'_\mathrm{ES}(\ell) = 0$, even if we assume $c''_\mathrm{ES}(\ell) \neq 0$, since then the second moment method fails completely (the orders of the first and second moment bounds do not match).
	
	%\smallskip
	On the other hand, there are at least three ways in which one might try to extend our results using the described method:
	\begin{enumerate}
		\item First, one could prove that $c_\mathrm{ES}$ has a non-zero derivative for a larger range of levels than those in Corollaries~\ref{c:BF} and~\ref{c:Variance RPW}.
		\item Second, one could find other ways of bounding the total variation distance between the number of excursion sets at different levels (although we expect that our bounds are of the correct order).
		\item Third, one could find different variables to compare in Chatterjee's lemma. Our choice of $Y_R = N_\mathrm{ES}(D_R,\ell + a_R)$ was motivated by previous results which made an analysis of $X_R - Y_R$ tractable, but perhaps other choices of $Y_R$ might work.
	\end{enumerate}
	We also believe that this method could be useful to prove fluctuations bounds on other `non-local' (or even `local') geometric functionals of Gaussian fields, and in principle works equally well for Gaussian fields in higher dimensions or on manifolds.
	
	%\bigskip
	\section{Fluctuations of the number of excursion/level set components}\label{s:lower bound fluctuations}
	In this section we prove our main results (Theorems~\ref{t:fluctuations general} and~\ref{t:fluctuations RPW}) following the outline given in Section~\ref{ss:Outline}, subject to two auxiliary results (Lemmas~\ref{l:Spectral density perturbation} and~\ref{l:RPW perturbation}) whose proof is deferred to Section~\ref{s:Perturbation}. We also give a proof of Proposition~\ref{t:Delta mass zero} (in Section~\ref{ss:spectral atom}, which does not rely on the other results in Sections~\ref{s:lower bound fluctuations} and~\ref{s:Perturbation}).
	
	%\smallskip
	Recall that the lower and upper right Dini-derivatives of a function $g:\R\to\R$ at a point~$x$ are defined respectively as
	\begin{equation}
	\label{e:dini1}
	\partial_+g(x):=\liminf_{\epsilon\downarrow 0}\frac{g(x+\epsilon)-g(x)}{\epsilon}\quad\text{and}\quad \partial^+g(x):=\limsup_{\epsilon\downarrow 0}\frac{g(x+\epsilon)-g(x)}{\epsilon}.
	\end{equation}
	The lower and upper left Dini-derivatives are defined respectively as
	\begin{equation}
	\label{e:dini2}
	\partial_-g(x):=\liminf_{\epsilon\downarrow 0}\frac{g(x)-g(x-\epsilon)}{\epsilon}\quad\text{and}\quad \partial^-g(x):=\limsup_{\epsilon\downarrow 0}\frac{g(x)-g(x-\epsilon)}{\epsilon}.
	\end{equation}
	For the sake of simplicity, in this section we focus on $N_\mathrm{ES}$ rather than $N_\mathrm{LS}$, and we also assume the level $\ell$ is such that either $\partial^+c_\mathrm{ES}(\ell)<0$ or $\partial_+c_\mathrm{ES}(\ell)>0$ rather than one of the corresponding conditions for left Dini-derivatives. The arguments are near identical in all of these cases, and we will mention any points of difference.
	
	\subsection{Varying the level}
	We first show that $N_\mathrm{ES}(D_{R_n},\ell)$ and $N_\mathrm{ES}(D_{R_n},\ell+a_n)$ differ by at least a certain order, for carefully chosen sequences $R_n \to \infty$ and $a_n \to 0$. There are two main inputs into this result. 
	
	%\smallskip
	The first is a deterministic topological link between $N_\mathrm{ES}(D_{R_n},\ell)$ and $N_\mathrm{ES}(D_{R_n},\ell+a_n)$ derived in \cite{mcauley2020excursion}. Recall the definition of $N_{h}(D_R,[a,b])$ for $h = m^+, m^-, s^+, s^-$ given prior to Theorem~\ref{t:integral equality}. We let $N_\mathrm{crit}(D_R,[a,b])$ denote the number of critical points of $f$ in $D_R$ with level in $[a,b]$, and let $N_\mathrm{tang}(D_R)$ and $N_\mathrm{tang}(D_R,[a,b])$ denote respectively the number of critical points of $f|_{\partial D_R}$ and those with level in $[a,b]$. Recall also the definition of the number of four-arm saddles $N_\mathrm{4\mhyphen arm}(D_R,[a,b])$ given before Assumption~\ref{a:four arm decay}.
	
	\begin{lemma}[{\cite[Corollary~2.4.7]{mcauley2020excursion}}]
		\label{l:morse}
		Let $f$ be a Gaussian field satisfying Assumption~\ref{a:minimal} and let $D\subset\R^2$ be an open rectangle centred at the origin. Then there exist absolute constants $c_1, c_2>1$ such that, for all $R>0$ and $a<b$, almost surely
		\begin{align}
		\label{e:m1}
		\lvert(N_\mathrm{ES}(D_R,a)-N_\mathrm{ES}(D_R,b)) & - ( N_{m^+}(D_R,[a,b]) - N_{s^-}(D_R,[a,b])  ) \rvert \\
		\nonumber & \qquad \le c_1 \big(N_\mathrm{tang}(D_R,[a,b] )  +  N_\mathrm{4\mhyphen arm}(D_R,[a,b]) \big)  \\
		\nonumber & \qquad \le c_2 N_\mathrm{tang}(D_R)
		\end{align}
		so in particular
		\begin{equation}
		\label{e:m4}
		| N_\mathrm{ES}(D_R,a)-N_\mathrm{ES}(D_R,b) |  \le c_1\left( N_\mathrm{crit}(D_R,[a,b]) + N_\mathrm{tang}(D_R,[a,b]  ) \right). 
		\end{equation}
	\end{lemma}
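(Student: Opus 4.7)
The plan is to prove Lemma~\ref{l:morse} via classical Morse theory on the closed rectangle $\bar D_R$, viewed as a $2$-manifold with corners. The idea is to show that $t \mapsto N_\mathrm{ES}(D_R, t)$ is piecewise constant, with jumps only at critical levels of $f$ in $\bar D_R$ or of $f|_{\partial D_R}$, and then to compute the size of each jump in terms of the local type of the critical point. Summing the jumps over $[a, b]$ will give the identity up to a bounded error per irregular event, which is what the lemma asserts.

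First I would establish the needed Morse structure. Under Assumption~\ref{a:minimal}, standard Bulinskaya-type arguments (using the non-degeneracy of the Gaussian vectors in \eqref{e:nondegen}) show that, almost surely, $f$ is Morse on $D_R$ with isolated non-degenerate critical points having distinct critical values, $f|_{\partial D_R}$ is Morse with its critical values distinct from those of $f$ in $D_R$, and neither $a$ nor $b$ is a critical value. On any interval $(c_1, c_2)$ of regular values, a gradient-flow argument with the vector field $\nabla f / |\nabla f|^2$ suitably cut off near $\partial D_R$ provides an ambient isotopy of $\bar D_R$ carrying $\{f \ge t_1\} \cap \bar D_R$ onto $\{f \ge t_2\} \cap \bar D_R$ for any $t_1, t_2 \in (c_1, c_2)$, preserving both the boundary and the number of contained components. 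Consequently $N_\mathrm{ES}(D_R, t)$ is constant on $(c_1, c_2)$, and it suffices to sum the jumps at critical levels in $[a,b]$.

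Next I would analyse the jumps as $t$ decreases through a critical level $c$. If $c$ corresponds to an interior local maximum ($m^+$), the Morse lemma shows that a small disk is born: jump $+1$. If $c$ corresponds to an interior saddle of type $s^-$ in $\R^2$ that is not four-arm in $D_R$, then by definition the two lower lobes are connected in $\{f \le c\} \cap \bar D_R$, so the upper lobes lie in distinct components of $\{f \ge t\} \cap \bar D_R$ for $t$ just above $c$ and merge into a single contained component just below $c$: jump $-1$. An $s^+$ non-four-arm saddle (only a hole in the excursion set is filled) and an interior local minimum each give jump $0$. Four-arm interior saddles and boundary tangencies (critical levels of $f|_{\partial D_R}$) may each change the contained-component count, but by a Morse-theoretic local analysis the change is bounded by an absolute constant per event. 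Summing over $[a,b]$ yields
\[
N_\mathrm{ES}(D_R, a) - N_\mathrm{ES}(D_R, b) = N_{m^+}(D_R, [a,b]) - N_{s^-}(D_R, [a,b]) + E ,
\]
with $|E| \le c_1 ( N_\mathrm{tang}(D_R, [a,b]) + N_\mathrm{4\mhyphen arm}(D_R, [a,b]) )$, which is the first bound in \eqref{e:m1}. The further bound by $c_2 N_\mathrm{tang}(D_R)$ follows from a combinatorial level-line argument: the four arms of each four-arm saddle must reach $\partial D_R$ (since all critical values are distinct and $\bar D_R$ is compact), and a careful counting ties the total number of four-arm events to the number of boundary tangencies via the monotonicity of $f|_{\partial D_R}$ between consecutive tangencies. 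The inequality \eqref{e:m4} then follows immediately from \eqref{e:m1} together with $N_{m^+} + N_{s^-} \le N_\mathrm{crit}$.

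The main obstacle is the classification at interior saddles. The $s^+/s^-$ types are defined via the global topology of $\R^2$, but the quantity we track involves component counts within $D_R$, and a saddle can be, say, $s^+$ in $\R^2$ while its connecting path of upper lobes exits $D_R$ \textemdash{} making the local picture in $D_R$ effectively four-arm. The delicate part of the proof is to verify, case by case, that all such discrepancies (and all boundary-related changes induced by tangencies of $f|_{\partial D_R}$) contribute $O(1)$ per event, so that the total error is indeed controlled by $N_\mathrm{tang}(D_R, [a,b]) + N_\mathrm{4\mhyphen arm}(D_R, [a,b])$. A secondary but more routine difficulty is handling the corners of the rectangle, where local Morse theory must be adapted; but since the corners form a finite set of points they contribute only a bounded error absorbed into the constants.
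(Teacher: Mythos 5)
The paper does not actually prove Lemma~\ref{l:morse}: it is quoted from \cite[Corollary~2.4.7]{mcauley2020excursion} (with the remark that it follows from the proof of \cite[Lemma~2.5]{Beliaev2018Number}), and your Morse-theoretic level sweep — piecewise constancy of $t\mapsto N_\mathrm{ES}(D_R,t)$ between critical levels via the rescaled gradient flow, jump $+1$ at interior maxima, $-1$ at non-four-arm lower-connected saddles, and $O(1)$ discrepancies confined to four-arm saddles and boundary tangencies — is essentially that proof, and it parallels the paper's own Appendix~\ref{a:Appendix} argument for Theorem~\ref{t:Morse}. The one place where your sketch conceals real work is the final bound $N_\mathrm{4\mhyphen arm}(D_R,[a,b])\le c\,N_\mathrm{tang}(D_R)$: a per-saddle count does not suffice, since the critical points of $f|_{\partial D_R}$ forced by saddles at different levels can coincide, and one genuinely needs the laminar (non-crossing) structure of the disjoint arm systems to show that a fixed proportion of the $4k$ boundary gaps they cut out each contain a distinct tangency.
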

	
	\begin{remark}
		For the analogous statements for level sets (which is also given in \cite[Corollary~2.4.7]{mcauley2020excursion}), the quantity 
		\[ N_{m^+}(D_R,[a,b]) - N_{s^-}(D_R,[a,b]) \]
		in \eqref{e:m1} should be replaced with
		\[  N_{m^+}(D_R,[a,b]) - N_{s^-}(D_R,[a,b]) - N_{m^-}(D_R,[a,b]) + N_{s^+}(D_R,[a,b]) . \]
	\end{remark}
	
We note that this lemma was technically stated in the case that $D$ is a ball centred at the origin, however the proof in \cite{mcauley2020excursion} also holds for rectangles. We also mention that this lemma essentially follows from the proof of \cite[Lemma~2.5]{Beliaev2018Number} (which gives the weaker inequality above and is key to proving Theorem~\ref{t:integral equality}).
	
	The second input is a moment bound on the number of critical/tangent points of $f$ in $D_R$ inside shrinking height windows, which was proven in \cite{muirhead2019second}.
	
	\begin{proposition}
		\label{p:vb}
		Let $f$ be a Gaussian field satisfying Assumption~\ref{a:minimal} and $D\subset\R^2$ a rectangle centred at the origin. Then there exists $c>0$ such that for all $R>0$ and $a<b$
		\[ \mathbb{E}\left(N_\mathrm{crit}(D_R,[a,b])^2\right) < c  \min\left\{ R^4(b-a)^2+R^2(b-a) , R^4 \right\}  \]
		and
		\[ \mathbb{E}\left(N_\mathrm{tang}(D_R,[a,b])^2\right) < c   \min\left\{ R^2(b-a)^2 + R(b-a) , R^2 \right\}   . \]
\end{proposition}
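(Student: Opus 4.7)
The plan is to apply the Kac-Rice formula to the first and second factorial moments of $N_\mathrm{crit}(D_R,[a,b])$ and $N_\mathrm{tang}(D_R,[a,b])$, and combine with the classical fact that $N_\mathrm{crit}(D_R)$ (resp.\ $N_\mathrm{tang}(D_R)$) has second moment $\lesssim R^4$ (resp.\ $\lesssim R^2$) by standard estimates \cite{eliz85,esfo16}, which immediately yields the alternative trivial bounds in the minimum. The remaining task is to establish the quantitative bounds $\mathbb{E}(N_\mathrm{crit}^2) \lesssim R^4(b-a)^2+R^2(b-a)$ and $\mathbb{E}(N_\mathrm{tang}^2) \lesssim R^2(b-a)^2+R(b-a)$.

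Writing $\mathbb{E}(N_\mathrm{crit}^2)=\mathbb{E}(N_\mathrm{crit})+\mathbb{E}(N_\mathrm{crit}(N_\mathrm{crit}-1))$, the one-point Kac-Rice formula combined with stationarity gives
\[
\mathbb{E}(N_\mathrm{crit}(D_R,[a,b])) = \mathrm{Area}(D)\,R^2\, \phi_{\nabla f(0)}(0)\,\mathbb{E}\bigl(|\det\nabla^2 f(0)|\,\id\{f(0)\in[a,b]\}\,\big|\,\nabla f(0)=0\bigr),
\]
and the non-degeneracy assumption \eqref{e:nondegen} makes the conditional law of $f(0)$ given $\nabla f(0)=0$ have a bounded density while $|\det\nabla^2 f(0)|$ has a uniformly bounded conditional mean, so the right-hand side is $\lesssim R^2(b-a)$. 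For the factorial moment, the two-point Kac-Rice formula yields $\int_{D_R\times D_R} K_2(x,y)\,dx\,dy$, where $K_2(x,y)=\phi_{(\nabla f(x),\nabla f(y))}(0,0)\,\mathbb{E}\bigl(|\det\nabla^2 f(x)||\det\nabla^2 f(y)|\,\id\{f(x),f(y)\in[a,b]\}\,\big|\,\nabla f(x)=\nabla f(y)=0\bigr)$. I would split the integration domain into $\{|x-y|>\epsilon_0\}$ and $\{|x-y|\leq\epsilon_0\}$ for a small fixed $\epsilon_0>0$: on the off-diagonal region the full Gaussian vector $(f(x),f(y),\nabla f(x),\nabla f(y),\nabla^2 f(x),\nabla^2 f(y))$ is uniformly non-degenerate, so $K_2\leq C(b-a)^2$ and this piece contributes $\lesssim R^4(b-a)^2$.

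The main technical step is the near-diagonal regime $\{|x-y|\leq\epsilon_0\}$, where $\phi_{(\nabla f(x),\nabla f(y))}(0,0)$ develops an $r^{-2}$ singularity as $r=|x-y|\to0$. The key observation is that this singularity is cancelled in $K_2$ by the determinant factor $|\det\nabla^2 f(x)||\det\nabla^2 f(y)|$: conditionally on $\nabla f(x)=\nabla f(y)=0$ with $r$ small, the Taylor expansion $\nabla f(y) = \nabla f(x) + r\,\nabla^2 f(x)\,e_\theta + O(r^2)$ forces $\nabla^2 f(x)\,e_\theta$ to be approximately zero, so $|\det\nabla^2 f(x)||\det\nabla^2 f(y)|$ is of conditional order $r^2$. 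Moreover, the two level constraints effectively collapse to one because $f(y)-f(x)=O(r^2)$ conditional on the gradients, so the conditional indicator contributes a factor $\lesssim (b-a)$. The net result is $K_2 \lesssim (b-a)$ on the near-diagonal region, contributing $\lesssim R^2 \epsilon_0^2(b-a) \lesssim R^2(b-a)$. Combining all pieces gives $\mathbb{E}(N_\mathrm{crit}^2)\lesssim R^4(b-a)^2+R^2(b-a)$. The bound for $N_\mathrm{tang}$ is proved by the identical strategy applied with the one-dimensional Kac-Rice formula to $f|_{\partial D_R}$, a union of four line segments of total length $\Theta(R)$, yielding first moment $\lesssim R(b-a)$, off-diagonal factorial moment $\lesssim R^2(b-a)^2$, and near-diagonal contribution $\lesssim R(b-a)$. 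The main obstacle throughout is this near-diagonal cancellation analysis, which requires the $C^3$-smoothness and the full non-degeneracy assumption \eqref{e:nondegen} to execute the Taylor expansion and to keep the resulting reparametrized Gaussian densities and conditional expectations uniformly bounded as $r\to 0$.
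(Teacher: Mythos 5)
The paper does not prove these bounds from scratch: its proof of Proposition~\ref{p:vb} is essentially a citation. For the critical-point bound it observes $N_\mathrm{crit}(D_R,[a,b])\leq N_\mathrm{crit}(B(c_DR),[a,b])$ with $c_D$ the diameter of $D$ and invokes \cite[Theorem~1.3]{muirhead2019second}; for the tangent-point bound it restricts $f$ to the four edges of $\partial D_R$, notes that tangent points are critical points of these one-dimensional restrictions, combines the four counts by Cauchy--Schwarz, and invokes \cite[Theorem~A.1]{muirhead2019second}. What you have written is instead a sketch of a direct proof of the content of those cited theorems. Your outline is the correct one (it is, in structure, how \cite{muirhead2019second} proceeds): write $\mathbb{E}(N^2)=\mathbb{E}(N)+\mathbb{E}(N(N-1))$, bound the first moment by $R^2(b-a)$ via one-point Kac--Rice, bound the off-diagonal part of the two-point integral by $R^4(b-a)^2$ using uniform non-degeneracy (which requires both the pointwise non-degeneracy in \eqref{e:nondegen} and the decay of $\kappa$ to handle $|x-y|\to\infty$), and show that the near-diagonal singularity $r^{-2}$ of $\phi_{(\nabla f(x),\nabla f(y))}(0,0)$ is cancelled by the conditional smallness of $|\det\nabla^2 f(x)\,\det\nabla^2 f(y)|$ and that only one level constraint survives.

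The gap is that the near-diagonal step --- which you correctly identify as the main obstacle --- is asserted heuristically rather than proved, and it is precisely the nontrivial content of the cited theorem. Two points in particular need real work. First, the statements ``$|\det\nabla^2 f(x)||\det\nabla^2 f(y)|$ is of conditional order $r^2$'' and ``the conditional indicator contributes a factor $\lesssim(b-a)$'' require \emph{uniform} quantitative control of a family of Gaussian conditional laws that degenerates as $r\to0$: one must reparametrize (e.g.\ replace $\nabla f(y)$ by $(\nabla f(y)-\nabla f(x))/r$), show the limiting covariance matrices are non-singular uniformly in the direction $e_\theta$, and verify that the conditional moments of the Hessians and the conditional density of $f(x)$ stay bounded. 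Second, the boundedness of the conditional density of $f(x)$ given $\nabla f(x)=\nabla f(y)=0$ amounts, in the $r\to0$ limit, to positivity of $\mathrm{Var}\bigl(f(0)\mid\nabla f(0),\nabla^2 f(0)e_\theta\bigr)$; this does not follow formally from Assumption~\ref{a:minimal} as stated (note for instance that for the Random Plane Wave the vector $(f(0),\nabla^2 f(0))$ is \emph{degenerate}, since $\Delta f=-f$, so one must check that conditioning on only the column $\nabla^2 f(0)e_\theta$ still leaves $f(0)$ non-degenerate). These verifications are carried out in \cite{muirhead2019second}, and the intended proof here is simply to quote that reference, together with the rectangle-to-ball comparison and the Cauchy--Schwarz reduction of the boundary count to the four edges.
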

\begin{proof}
Let $c_D$ be the diameter of $D$, then $N_\mathrm{crit}(D_R,[a,b])\leq N_\mathrm{crit}(B(c_DR),[a,b])$ and \cite[Theorem 1.3]{muirhead2019second} states that the second moment of the latter quantity satisfies the first inequality above.

For the second inequality, we consider the restriction of $f$ to each of the four line segments which make up the boundary of $D_R$. The tangent points of $f$ are then the critical points of the restricted field. By Cauchy-Schwarz, it is enough to prove the inequality above for each restriction separately. This is precisely the conclusion of \cite[Theorem~A.1]{muirhead2019second}.
\end{proof}
	
	To apply the second moment method, we require a lower bound on the mean of the difference $N_\mathrm{ES}(D_{R_n},\ell) - N_\mathrm{ES}(D_{R_n},\ell+a_n)$.
	
	\begin{lemma}
		\label{l:fm}
		Let $f$ be a Gaussian field satisfying Assumption~\ref{a:minimal} and $D\subset\R^2$ a rectangle centred at the origin. If $\partial^+c_\mathrm{ES}(\ell)<0$ or $\partial_+c_\mathrm{ES}(\ell)>0$ then there exists $c>0$ such that, for any positive sequences $a_n\to 0$ and $R_n\to\infty$,
		\begin{displaymath}
		\lvert\mathbb{E}\left(N_\mathrm{ES}(D_{R_n},\ell)-N_\mathrm{ES}(D_{R_n},\ell+a_n)\right)\rvert > c R_n^2 a_n+O(R_n)
		\end{displaymath}
		for all $n$ sufficiently large. If, in addition, $f$ satisfies Assumption \ref{a:four arm decay} then
		\begin{displaymath}
		\lvert\mathbb{E}\left(N_\mathrm{ES}(D_{R_n},\ell)-N_\mathrm{ES}(D_{R_n},\ell+a_n)\right)\rvert > c R_n^2 a_n + O(R_n a_n) + O\left(\sqrt{R_n a_n}\right)
		\end{displaymath}
		for all $n$ sufficiently large. If  right Dini derivatives are replaced with left Dini derivatives, then the same conclusion holds on replacing $N_\mathrm{ES}(D_R,\ell+a_n)$ with $N_\mathrm{ES}(D_R,\ell-a_n)$. These statements also hold if excursion sets are replaced by level sets. 
	\end{lemma}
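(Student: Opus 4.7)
The plan is to reduce the mean difference to a signed count of critical points via Theorem~\ref{t:integral equality}, translate the Dini-derivative hypothesis into a local slope bound on $c_\mathrm{ES}$, and then control the topological correction terms through Lemma~\ref{l:morse} combined with the Kac--Rice moment estimates in Proposition~\ref{p:vb}.

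Concretely, I will apply Lemma~\ref{l:morse} with $a = \ell$ and $b = \ell + a_n$ and take expectations to obtain
\begin{displaymath}
\mathbb{E}\bigl(N_\mathrm{ES}(D_{R_n},\ell) - N_\mathrm{ES}(D_{R_n},\ell+a_n)\bigr) = \mathbb{E}\bigl(N_{m^+}(D_{R_n},[\ell,\ell+a_n]) - N_{s^-}(D_{R_n},[\ell,\ell+a_n])\bigr) + E_n,
\end{displaymath}
where $|E_n| \le c_2\, \mathbb{E}(N_\mathrm{tang}(D_{R_n}))$ in general, and under Assumption~\ref{a:four arm decay} one has the sharper bound $|E_n| \le c_1\mathbb{E}(N_\mathrm{tang}(D_{R_n},[\ell,\ell+a_n])) + c_1\mathbb{E}(N_\mathrm{4\mhyphen arm}(D_{R_n},[\ell,\ell+a_n]))$. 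Theorem~\ref{t:integral equality} identifies the first expectation on the right as $(c_\mathrm{ES}(\ell)-c_\mathrm{ES}(\ell+a_n))\,\mathrm{Area}(D)\, R_n^2$, and the Dini derivative hypothesis---through the $\liminf/\limsup$ definitions \eqref{e:dini1}---yields constants $c,\delta>0$ such that $|c_\mathrm{ES}(\ell)-c_\mathrm{ES}(\ell+a)| \ge c\, a$ for all $a \in (0,\delta)$. Since $a_n\to0$, this applies eventually and produces the main term $c R_n^2 a_n$.

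The error analysis is then routine. For the first claim, Proposition~\ref{p:vb} on its $\min = R^2$ branch, combined with $\mathbb{E}(X) \le \sqrt{\mathbb{E}(X^2)}$, gives $\mathbb{E}(N_\mathrm{tang}(D_{R_n})) = O(R_n)$, producing the $O(R_n)$ error. For the second claim, the same application of Cauchy--Schwarz on the bounded-window branch of Proposition~\ref{p:vb} gives $\mathbb{E}(N_\mathrm{tang}(D_{R_n},[\ell,\ell+a_n])) \le c\sqrt{R_n^2 a_n^2 + R_n a_n} = O(R_n a_n) + O(\sqrt{R_n a_n})$, while Assumption~\ref{a:four arm decay} supplies $\mathbb{E}(N_\mathrm{4\mhyphen arm}(D_{R_n},[\ell,\ell+a_n])) \le c\,\delta_{R_n} R_n^2 a_n = o(R_n^2 a_n)$, which is absorbed into the main term for $n$ large. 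The level-set case is handled identically upon substituting the level-set analogues of Lemma~\ref{l:morse} and Theorem~\ref{t:integral equality} noted in the remark following Lemma~\ref{l:morse}, and the left Dini derivative cases are handled by replacing $a_n$ with $-a_n$ throughout.

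I do not expect a substantial obstacle: the lemma is essentially the marriage of the topological identity in Lemma~\ref{l:morse} with the density identity in Theorem~\ref{t:integral equality}, reinterpreting the Dini derivative as a local slope bound on $c_\mathrm{ES}$. The only points requiring care are absorbing the $\mathrm{Area}(D)$ factor into the constant $c$ and confirming that the cited identities, stated originally for balls, carry over to rectangles as asserted in the excerpt.
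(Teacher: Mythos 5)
Your proposal is correct and follows essentially the same route as the paper's proof: Lemma~\ref{l:morse} plus Theorem~\ref{t:integral equality} to isolate the main term $\lvert c_\mathrm{ES}(\ell)-c_\mathrm{ES}(\ell+a_n)\rvert\,\mathrm{Area}(D)\,R_n^2$, the Dini-derivative hypothesis to lower-bound it by $c\,R_n^2 a_n$, and Proposition~\ref{p:vb} with Jensen/Cauchy--Schwarz (plus Assumption~\ref{a:four arm decay} for the sharper version) to control the tangent-point and four-arm error terms exactly as stated. No gaps.
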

	
	\begin{proof}
		The second inequality in \eqref{e:m1} states that
		\begin{align*}
		N_\mathrm{ES}(D_{R_n},\ell)-N_\mathrm{ES}(D_{R_n},\ell + a_n)  =& N_{m^+}(D_{R_n},[\ell,\ell+a_n))-N_{s^-}(D_{R_n},[\ell,\ell + a_n))\\
		& +O\left( N_\mathrm{tang}(D_{R_n}) \right) 
		\end{align*}
		almost surely, and so by Theorem \ref{t:integral equality},
		\begin{multline*}
		\lvert\mathbb{E}(N_\mathrm{ES}(D_{R_n},\ell)-N_\mathrm{ES}(D_{R_n},\ell+a_n))\rvert \geq
		\lvert c_\mathrm{ES}(\ell)-c_\mathrm{ES}(\ell+a_n)\rvert \cdot\mathrm{Area}(D)\cdot R_n^2\\
		+O\left(\mathbb{E}\left(N_\mathrm{tang}(D_{R_n})\right) \right) . 
		\end{multline*}
		By Proposition~\ref{p:vb} and Jensen's inequality, $\mathbb{E}\left(N_\mathrm{tang}(D_{R_n})\right)\leq cR_n$ for some $c>0$ depending only on $D$ and the distribution of $f$. Applying our assumption on the Dini-derivative of $c_\mathrm{ES}$ then proves the first part of the lemma.

The tighter inequality in \eqref{e:m1} states that
\begin{multline*}
\begin{aligned}
N_\mathrm{ES}(D_{R_n},\ell)-N_\mathrm{ES}(D_{R_n}, \ell+a_n)  =& N_{m^+}(D_{R_n},[\ell,\ell+a_n))-N_{s^-}(D_{R_n},[\ell,\ell + a_n))\\
&+ O \big(N_\mathrm{tang}(D_{R_n},[\ell,\ell +a_n]) 
\end{aligned}\\
+ N_\mathrm{4\mhyphen arm}(D_{R_n},[\ell,\ell+a_n]) \big) 
\end{multline*}
almost surely, and so by Theorem \ref{t:integral equality} and Assumption \ref{a:four arm decay}, 
\begin{equation*}
\begin{aligned}
\lvert\mathbb{E}(N_\mathrm{ES}(D_{R_n},\ell)-N_\mathrm{ES}(D_{R_n},\ell+a_n))\rvert \geq& \lvert c_\mathrm{ES}(\ell)-c_\mathrm{ES}(\ell+a_n)\rvert\cdot\mathrm{Area}(D)\cdot R_n^2\\ &+ o\left(R_n^2 a_n\right) + O\left(\mathbb{E}(N_\mathrm{tang}(D_{R_n},[\ell,\ell+a_n])) \right).
\end{aligned}
\end{equation*}
By Proposition \ref{p:vb} and Jensen's inequality,
\[  \mathbb{E}(N_\mathrm{tang}(D_{R_n},[\ell,\ell + a_n])) = O(R_n  a_n) + O\left(\sqrt{R_n a_n}\right) , \]
thus proving the lemma.
\end{proof}
	
We next prove a matching second moment bound.

\begin{lemma}
\label{l:sm}
Let $f$ be a Gaussian field satisfying Assumption~\ref{a:minimal} and $D\subset\R^2$ be an open rectangle centred at the origin. There exists $c_1> 0$ such that, for any positive sequences $a_n \to 0$ and $R_n \to \infty$,
\begin{equation}
\label{e:df1}
\mathbb{E}\left( (N_\mathrm{ES}(D_{R_n},\ell) - N_\mathrm{ES}(D_{R_n},\ell + a_n))^2 \right) < c_1 \big( R_n^4 a_n^2 + R_n^2 a_n  \big), 
\end{equation}
and the same conclusion holds for level sets.
\end{lemma}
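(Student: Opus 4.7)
The plan is to prove this directly from the deterministic Morse-theoretic bound in Lemma~\ref{l:morse} together with the Kac--Rice second moment estimates in Proposition~\ref{p:vb}. Specifically, inequality \eqref{e:m4} of Lemma~\ref{l:morse} gives the almost sure bound
\[ |N_\mathrm{ES}(D_{R_n},\ell) - N_\mathrm{ES}(D_{R_n},\ell+a_n)| \le c_1 \bigl( N_\mathrm{crit}(D_{R_n},[\ell,\ell+a_n]) + N_\mathrm{tang}(D_{R_n},[\ell,\ell+a_n]) \bigr), \]
so squaring and using $(x+y)^2 \le 2x^2 + 2y^2$ reduces the problem to bounding the second moments of the number of critical points and the number of tangent points in a height window of width $a_n$.

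First I would apply the two bounds in Proposition~\ref{p:vb}. The critical point bound yields
\[ \mathbb{E}\bigl(N_\mathrm{crit}(D_{R_n},[\ell,\ell+a_n])^2\bigr) \le c\bigl(R_n^4 a_n^2 + R_n^2 a_n\bigr), \]
while the tangent point bound yields
\[ \mathbb{E}\bigl(N_\mathrm{tang}(D_{R_n},[\ell,\ell+a_n])^2\bigr) \le c\bigl(R_n^2 a_n^2 + R_n a_n\bigr). \]
Adding these contributions, the dominant terms are $R_n^4 a_n^2$ and $R_n^2 a_n$, since $R_n^2 a_n^2 \le R_n^4 a_n^2$ and $R_n a_n \le R_n^2 a_n$ for all $R_n \geq 1$. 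Combining the two estimates therefore gives \eqref{e:df1} with an appropriate constant $c_1$.

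For the level set version, the analogous deterministic bound still controls $|N_\mathrm{LS}(D_{R_n},\ell) - N_\mathrm{LS}(D_{R_n},\ell+a_n)|$ by the same quantity $N_\mathrm{crit}(D_{R_n},[\ell,\ell+a_n]) + N_\mathrm{tang}(D_{R_n},[\ell,\ell+a_n])$ (this is the level set analogue of Lemma~\ref{l:morse} from \cite{mcauley2020excursion}, noted in the remark following it, since the Morse-theoretic count involves the same four types of critical points and the same tangent terms). The identical argument then applies.

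There is essentially no main obstacle here: the only subtle point is ensuring that the level set analogue of \eqref{e:m4} holds in the same form, which is immediate from the remark after Lemma~\ref{l:morse} since the extra terms $N_{m^-}$ and $N_{s^+}$ are bounded by $N_\mathrm{crit}(D_{R_n},[\ell,\ell+a_n])$. Everything else is a routine combination of the cited deterministic and second moment bounds.
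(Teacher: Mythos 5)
Your proposal is correct and is essentially identical to the paper's own proof: the paper also simply combines the deterministic bound \eqref{e:m4} of Lemma~\ref{l:morse} with the two second moment estimates of Proposition~\ref{p:vb}. Your additional remarks (the Cauchy--Schwarz/squaring step, the domination of the tangent-point terms, and the level set analogue via the remark after Lemma~\ref{l:morse}) are exactly the routine details the paper leaves implicit.
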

\begin{proof}
Combine \eqref{e:m4} in Lemma \ref{l:morse} with the bounds
\[ \mathbb{E}\left(N_\mathrm{crit}(D_R,[a,b])^2\right) < c_1  \big(  R^4(b-a)^2+R^2(b-a)  \big)  \]
and
\[ \mathbb{E}\left(N_\mathrm{tang}(D_R,[a,b])^2\right) < c_1  \big( R^2(b-a)^2 + R(b-a)  \big)  \]
in Proposition \ref{p:vb}.
\end{proof}	
\begin{remark}
\label{r:ub}
By taking instead the bounds
\[ \mathbb{E}\left(N_\mathrm{crit}(D_R,[a,b])^2\right) < c   R^4   \quad \text{and} \quad \mathbb{E}\left(N_\mathrm{tang}(D_R,[a,b])^2\right) < c   R^2     \]
in Proposition \ref{p:vb}, and setting $b \equiv \infty$ in \eqref{e:m4}, the same argument also establishes the trivial upper bound \eqref{e:ub} under Assumption \ref{a:minimal}. (To be precise, we take $b\to\infty$ and apply the monotone convergence theorem to the squared number of critical/tangent points. We next apply the dominated convergence theorem to $(N_{ES}(D_R,a)-N_{ES}(D_R,b))^2$ as $b\to\infty$, using the squared number of critical and tangent points with height in $[a,\infty)$ as the dominating variable.) In fact, by applying this argument to a compact domain $B(1)$, covering $B(R)$ with $\approx R^2$ copies of $B(1)$ and controlling boundary components, we can actually derive \eqref{e:ub} without the condition that $\max_{\lvert\alpha\rvert\leq 2}\lvert\partial_\alpha\kappa( x)\rvert\to 0$ as $\lvert x\rvert\to\infty$.
\end{remark}
	
	Armed with matching first and second moment bounds, an application of the second moment method yields a lower bound on the order by which $N_\mathrm{ES}(D_{R_n},\ell)$ and $N_\mathrm{ES}(D_{R_n},\ell+a_n)$ differ.
	
	\begin{proposition}
		\label{p:smm}
		Let $f$ be a Gaussian field satisfying Assumption~\ref{a:minimal} and $D\subset\R^2$ be an open rectangle centred at the origin. Assume that $\partial^+c_\mathrm{ES}(\ell)<0$ or $\partial_+c_\mathrm{ES}(\ell)>0$ and let $a_n\to 0$ and $R_n\to\infty$ be positive sequences. If either of the following conditions hold:
		\begin{enumerate}
			\item $R_na_n\to\infty$ as $n\to\infty$;
			\item $f$ satisfies Assumption \ref{a:four arm decay}, and $R_n^2a_n$ is bounded away from zero as $n\to\infty$;
		\end{enumerate}
		then $N_\mathrm{ES}(D_{R_n},\ell)$ and $N_\mathrm{ES}(D_{R_n},\ell+a_n)$ differ by order at least $R_n^2a_n$. This statement also holds if excursion sets are replaced by level sets.
	\end{proposition}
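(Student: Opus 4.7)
The plan is to apply a standard second moment method to the difference $\Delta_n := N_\mathrm{ES}(D_{R_n},\ell) - N_\mathrm{ES}(D_{R_n},\ell+a_n)$, using Lemmas \ref{l:fm} and \ref{l:sm} as the matching first and second moment inputs. The key observation is that, under either condition (1) or (2), the error terms in Lemma \ref{l:fm} become negligible relative to the leading term $R_n^2 a_n$, and simultaneously the upper bound in Lemma \ref{l:sm} is of order $(R_n^2 a_n)^2$.

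The first step is a bookkeeping check. Under condition (1) we have $R_n a_n \to \infty$, so in particular $R_n^2 a_n \to \infty$, and hence the $O(R_n)$ remainder in Lemma \ref{l:fm} is $o(R_n^2 a_n)$, giving $|\mathbb{E}(\Delta_n)| \geq c' R_n^2 a_n$ for some $c'>0$ and all $n$ large. The second moment bound $\mathbb{E}(\Delta_n^2) \leq c_1(R_n^4 a_n^2 + R_n^2 a_n)$ is then dominated by $R_n^4 a_n^2 = (R_n^2 a_n)^2$, since $R_n^2 a_n \to \infty$. Under condition (2), $R_n^2 a_n$ is bounded away from zero and $R_n \to \infty$, so $R_n a_n = o(R_n^2 a_n)$ and $\sqrt{R_n a_n} \le \sqrt{(R_n^2 a_n)/R_n} = o(R_n^2 a_n)$; hence the error terms in the sharper form of Lemma \ref{l:fm} are again $o(R_n^2 a_n)$, and the term $R_n^2 a_n = O((R_n^2 a_n)^2)$ is absorbed in Lemma \ref{l:sm}. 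In both cases we therefore obtain constants $c_1, C_1 > 0$ such that, for all $n$ large,
\[
|\mathbb{E}(\Delta_n)| \geq c_1 R_n^2 a_n \quad\text{and}\quad \mathbb{E}(\Delta_n^2) \leq C_1 (R_n^2 a_n)^2.
\]

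The second step is the Paley--Zygmund-type argument. Setting $t_n := c_1 R_n^2 a_n / 2$, we write
\[
|\mathbb{E}(\Delta_n)| \;\le\; t_n + \mathbb{E}\bigl(|\Delta_n|\,\mathbf{1}_{|\Delta_n| \geq t_n}\bigr) \;\le\; t_n + \sqrt{\mathbb{E}(\Delta_n^2)}\sqrt{\mathbb{P}(|\Delta_n|\geq t_n)},
\]
by Cauchy--Schwarz. Rearranging and using the moment bounds above yields
\[
\mathbb{P}(|\Delta_n| \geq t_n) \;\geq\; \frac{(|\mathbb{E}(\Delta_n)| - t_n)^2}{\mathbb{E}(\Delta_n^2)} \;\geq\; \frac{(c_1 R_n^2 a_n / 2)^2}{C_1 (R_n^2 a_n)^2} \;=\; \frac{c_1^2}{4 C_1},
\]
which is precisely the assertion that $N_\mathrm{ES}(D_{R_n},\ell)$ and $N_\mathrm{ES}(D_{R_n},\ell+a_n)$ differ by order at least $R_n^2 a_n$ in the sense of Definition \ref{d:differ}. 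The case of left Dini-derivatives is handled identically after replacing $\ell+a_n$ by $\ell-a_n$, and the level set case is identical since Lemmas \ref{l:fm} and \ref{l:sm} hold verbatim for $N_\mathrm{LS}$.

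The main (and only) technical obstacle is the bookkeeping in the first step: one must check carefully that each error term produced by Lemma \ref{l:fm} and Lemma \ref{l:sm} is dominated by $R_n^2 a_n$ (respectively $(R_n^2 a_n)^2$) under the respective scaling assumptions on $(R_n, a_n)$. The use of the sharper form of Lemma \ref{l:fm} under Assumption \ref{a:four arm decay} is essential for handling the weaker regime $R_n^2 a_n \asymp 1$ in condition (2), since in this regime the crude error $O(R_n)$ would overwhelm the leading term.
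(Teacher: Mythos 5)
Your proposal is correct and follows essentially the same route as the paper: matching first and second moment bounds from Lemmas \ref{l:fm} and \ref{l:sm} (with the stated scaling conditions ensuring the error terms are negligible), followed by a Paley--Zygmund argument; the only cosmetic difference is that you derive the Paley--Zygmund inequality from scratch via Cauchy--Schwarz rather than citing it.
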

	
	\begin{proof}
		Let 
		\begin{displaymath}
		X_n=N_\mathrm{ES}(D_{R_n},\ell)\quad\quad\text{and}\quad\quad Y_n=N_\mathrm{ES}(D_{R_n},\ell+a_n).
		\end{displaymath}
		Under either condition (1) or (2) in the statement of the proposition, Lemma \ref{l:fm} shows that there is a constant $c_1>0$ such that
		\begin{displaymath}
		\mathbb{E}|X_n-Y_n| \ge \lvert\mathbb{E}(X_n-Y_n)\rvert > c_1 R_n^2 a_n
		\end{displaymath}
		for all $n$ sufficiently large. Combining this with the Paley-Zygmund inequality
		\begin{equation}\label{e:fluctuations 1}
		\begin{aligned}
		\mathbb{P}\left(\lvert X_n-Y_n\rvert >\frac{c_1}{2}R_n^2a_n\right)& \geq\mathbb{P}\left(\lvert X_n-Y_n\rvert> \frac{1}{2}\mathbb{E}(\lvert X_n-Y_n\rvert) \right) 
		\\
		&\geq  \frac{\left(\mathbb{E}\lvert X_n-Y_n\rvert\right)^2}{4 \mathbb{E}\left(( X_n-Y_n)^2\right)}.
		\end{aligned}
		\end{equation}
		Combining this with Lemma \ref{l:sm} gives
		\begin{equation}\label{e:fluctuations 2}
		\begin{aligned}
		\frac{\left(\mathbb{E}\lvert X_n-Y_n\rvert\right)^2}{\mathbb{E}\left(( X_n-Y_n)^2\right)}\geq c_2\frac{\lvert\mathbb{E}(X_n-Y_n)\rvert^2}{R_n^4a_n^2+R_n^2a_n}\geq c_3\frac{R_n^4a_n^2}{R_n^4a_n^2+R_n^2a_n}\geq c_4>0
		\end{aligned}
		\end{equation}
		for constants $c_2,c_3,c_4>0$ and all $n$ sufficiently large. Combining \eqref{e:fluctuations 1} and \eqref{e:fluctuations 2} completes the proof.% We make the observation here (for later use) that the constants $c_1$ and $c_4$ do not depend on $\ell$ except through the value of $D_+c_\mathrm{ES}(\ell)$ (or $D^+c_\mathrm{ES}(\ell)$ if this quantity is negative). This observation follows immediately from the proof here and the proof of Lemma~\ref{l:sm}.
	\end{proof}
	
	\subsection{Bounding the total variation distance and completion of the proof}
	We next bound the total variation distance between the number of excursion sets at different levels, using different arguments for the case of general fields (i.e.\ fields satisfying the conditions of Theorem~\ref{t:fluctuations general}) and for the RPW. This completes the proof of the main results (subject to two auxiliary lemmas, the proofs of which are deferred until Section \ref{s:Perturbation}).
	
	\subsubsection{General fields}
	We begin by recalling some general theory of Gaussian fields (for which we refer to \cite{jan}). Recall that to a continuous Gaussian field $f$ defined on $\R^d$ we can associate a Hilbert space of functions $H \subset C(\mathbb{R}^d)$ known as the \textit{reproducing kernel Hilbert space} (RKHS), or Cameron-Martin space, defined as the completion of the space of finite linear combinations of the covariance function~$\kappa$
	\begin{equation}
	\label{e:rkhs}
	\sum_{1 \le i \le n} a_i \kappa( s_i , \cdot)   \ , \quad a_i \in \mathbb{R}, s_i \in \mathbb{R}^d ,
	\end{equation}
	equipped with the inner product  
	\[   \left\langle \sum_{1 \le i \le n} a_i K(s_i, \cdot), \sum_{1 \le j \le m} a'_j K(s'_j, \cdot) \, \right\rangle_H = \sum_{\substack{1 \le i\leq n\\1\leq j \le m}}   a_i a_j'  K(s_i, s_j') . \]
	
	The importance of the RKHS for our purposes is the following corollary of the Cameron-Martin theorem.
	
	\begin{proposition}[{\cite[Corollary~3.10]{Muirhead2018sharp}}]\label{p:cameronmartin}
	Let $f$ be a continuous Gaussian field defined on some Euclidean space. For every $h \in H$,
		\[ d_\mathrm{TV}(f,f-h)\leq \frac{\|h\|_H}{\sqrt{\log 2}}. \]
	\end{proposition}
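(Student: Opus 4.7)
The proposition is a quantitative form of the classical Cameron-Martin theorem, bounding the total variation distance between a Gaussian measure and one of its shifts by an element of the RKHS. My strategy is threefold: (i) write down an explicit Radon-Nikodym derivative for the law of $f-h$ with respect to that of $f$ using Cameron-Martin; (ii) bound its second moment via a direct Gaussian computation; and (iii) combine this with Cauchy-Schwarz and an elementary scalar inequality to recover the constant $1/\sqrt{\log 2}$.

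For step (i), let $\mu_f$ and $\mu_{f-h}$ denote the laws of $f$ and $f-h$ respectively. The Cameron-Martin theorem provides, for $h \in H$, that $\mu_{f-h} \ll \mu_f$ with density
$$L := \frac{d\mu_{f-h}}{d\mu_f} = \exp\!\bigl(-Z - \tfrac{1}{2}\|h\|_H^2\bigr),$$
where $Z$ is the image of $h$ under the canonical isometric embedding $H \hookrightarrow L^2(\mu_f)$, namely the extension by linearity and continuity of $\kappa(s,\cdot) \mapsto f(s)$. In particular $Z$ is centred Gaussian with variance $\|h\|_H^2$ under $\mu_f$. For step (ii), the Gaussian moment generating function gives $\mathbb{E}_{\mu_f}[L] = 1$ and $\mathbb{E}_{\mu_f}[L^2] = e^{\|h\|_H^2}$, so that $\mathbb{E}_{\mu_f}[(L-1)^2] = e^{\|h\|_H^2} - 1$. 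Combined with the standard identity $d_\mathrm{TV}(\mu_f, \mu_{f-h}) = \tfrac{1}{2}\mathbb{E}_{\mu_f}|L-1|$ and Cauchy-Schwarz, this yields
$$d_\mathrm{TV}(\mu_f, \mu_{f-h}) \leq \tfrac{1}{2}\sqrt{e^{\|h\|_H^2}-1}.$$

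For step (iii), I would verify that $\tfrac{1}{2}\sqrt{e^{\sigma^2}-1} \leq \sigma/\sqrt{\log 2}$ for all $\sigma := \|h\|_H \geq 0$. If $\sigma^2 \geq \log 2$ the right-hand side is already at least $1$ and the bound is trivial, since total variation distance never exceeds $1$. Otherwise, squaring reduces the claim to $e^t \leq 1 + 4t/\log 2$ on $[0, \log 2]$, which follows because $g(t) := 1 + 4t/\log 2 - e^t$ satisfies $g(0) = 0$ and $g'(t) = 4/\log 2 - e^t \geq 4/\log 2 - 2 > 0$ on this interval (using $4/\log 2 \approx 5.77$), so $g \geq 0$ there. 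There is no substantial obstacle in this argument; the only technical point requiring care is the rigorous construction of the isometric embedding $H \hookrightarrow L^2(\mu_f)$ underlying the Cameron-Martin density, which is a classical component of Gaussian measure theory. One could alternatively replace Cauchy-Schwarz by Pinsker's inequality applied to the relative entropy $d_\mathrm{KL}(\mu_{f-h}\,\|\,\mu_f) = \tfrac{1}{2}\|h\|_H^2$, which in fact yields the slightly sharper constant $1/2$ in place of $1/\sqrt{\log 2}$; but the computation above is self-contained and directly matches the stated form.
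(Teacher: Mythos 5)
Your proof is correct. Note that the paper does not prove this proposition at all --- it is imported by citation from \cite{Muirhead2018sharp}, so there is no in-paper argument to compare against; but your derivation is complete and self-contained. Each step checks out: the Cameron--Martin density $L=\exp(-Z-\tfrac12\|h\|_H^2)$ with $Z$ the Paley--Wiener image of $h$ (centred Gaussian of variance $\sigma^2:=\|h\|_H^2$ under $\mu_f$), the moment computations $\mathbb{E}[L]=1$ and $\mathbb{E}[L^2]=e^{\sigma^2}$, the bound $d_\mathrm{TV}\le\tfrac12\sqrt{e^{\sigma^2}-1}$ via $d_\mathrm{TV}=\tfrac12\mathbb{E}|L-1|$ and Cauchy--Schwarz, and the elementary verification that $\tfrac12\sqrt{e^{\sigma^2}-1}\le\sigma/\sqrt{\log 2}$ (splitting at $\sigma^2=\log 2$ and using $d_\mathrm{TV}\le 1$ in the large-$\sigma$ regime). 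Your closing remark is also right: Pinsker's inequality with $d_\mathrm{KL}(\mu_{f-h}\,\|\,\mu_f)=\tfrac12\|h\|_H^2$ gives the sharper constant $\tfrac12$, and the exact one-dimensional reduction $d_\mathrm{TV}(f,f-h)=2\Phi(\|h\|_H/2)-1$ would give $1/\sqrt{2\pi}$; the stated constant $1/\sqrt{\log 2}$ is simply what the cited corollary records and your argument recovers it.
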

	
	If the Gaussian field $f$ is stationary, the norm $\|h\|_{H}$ can be written explicitly in terms of the spectral measure $\mu$. Indeed we can represent $H$ as the Fourier transform of $L^2_\mathrm{sym}(d\mu)$, the space of complex Hermitian  functions square integrable with respect to $\mu$. Specifically, each $h\in H$ is of the form $\mathcal{F} \left(\hat{h} \, d\mu\right)$ with a unique $\hat{h}\in L^2_\mathrm{sym}(d\mu)$, and 
	\[
	\langle h_1,h_2\rangle_H=\left\langle\hat{h}_1,\hat{h}_2\right\rangle_{L^2_\mathrm{sym}(d\mu)}.
	\]
	In particular,
	\[
	\|h\|_H= \left\| \hat{h} \right\|_{L^2_\mathrm{sym}(d\mu)}.   
	\]
	As an immediate consequence, in the case that $\mu$ has density $\rho$, we have that $\mathcal{F}\left(\hat{h} \, d\mu \right)= \mathcal{F}\left(\hat{h}\rho \, dx\right)$, i.e.\, $\hat{h}$ differs from the standard (inverse) Fourier transform $\mathcal{F}^{-1}(h)$ by division by $\rho$. If $\Omega:=\mathrm{supp}(\hat{h})$ has finite area, this implies the bound
	\begin{align}
	\label{e:rkhsnormbound}
	\| h \|^2_{H} =  \left\| \hat{h} \right\|^2_{L^2_\mathrm{sym}(d\mu)}  &  \le  \sup \{ |\mathcal{F}^{-1}(h)(x)|^2  / \rho(x) : x \in \Omega \} \, \mathrm{Area}(\Omega)  \\
	\nonumber &  \le \frac{ \sup \{ |\mathcal{F}^{-1}(h)(x)|^2  : x \in \Omega \} \, \mathrm{Area}(\Omega)  }{ \inf\{   \rho(x) : x \in \Omega \} }. 
	\end{align}
	
	We now restrict to the setting of Theorem~\ref{t:fluctuations general}, letting $f$ be a Gaussian field satisfying Assumptions~\ref{a:minimal} and~\ref{a:spectral mass}. We recall that $g(r):=\inf_{ x\in B(2r)}\rho( x)$. For each $r>0$, we define $h_r:\R^2\to\R$ by
	\begin{equation}\label{e:h definition}
	h_r( t)=\frac{1}{4r^2}\mathcal{F}\left[\mathds{1}_{[-r,r]^2}\right]( t)=\frac{\sin(2\pi rt_1)}{2\pi rt_1}\frac{\sin(2\pi rt_2)}{2\pi rt_2}.
	\end{equation}
%	Since $\mu$ has a continuous density $\rho$ which is positive at the origin, we see that for $r>0$ sufficiently small, $h_r$ is an element of the RKHS $H$. Then, by \eqref{e:rkhsnormbound}, 
Since $\mu$ has a density $\rho$ which is  uniformly positive near the origin, we see that for $r>0$ sufficiently small, $h_r$ is an element of the RKHS $H$. Then, by \eqref{e:rkhsnormbound},
	\begin{equation}
	\label{e:dmass2}
	\|h_r\|_H \leq \frac{1}{2r\sqrt{g(r)}} .
	\end{equation}
	
	We will use $h_r$, as $r \to 0$, to approximate the constant function $1$; if we choose positive sequences $R_n\to\infty$ and $r_n\to 0$ such that $r_nR_n\to 0$, then by a Taylor expansion,
	\begin{align}
	\label{e:h}
	\|1-h_{r_n}\|_{C^2(D_{R_n})}=O\left(r_n^2R_n^2\right)\qquad\text{as }n\to\infty.
	%\lvert1-h_r( t)\rvert&=O(r^2 \lvert  t \rvert^2)\quad\text{as }r\lvert  t\rvert\to 0.
	\end{align}
	In the next lemma we show that this approximation has a negligible effect on the number of excursion sets, i.e.\ the number of excursion sets of $f - a_n$ is well approximated by the number of excursion sets of $f - a_n h_{r_n}$ for an appropriate choice of $r_n$. We extend our previous notation $N_\mathrm{ES}(D_R, \ell)$ slightly, defining $N_\mathrm{ES}(g;D_R,\ell)$ for $g\in C^2_\mathrm{loc}(\R^2)$ to be the number of components of $\{g\geq\ell\}$ contained in $D_R$ (so that $N_\mathrm{ES}(D_R, \ell) = N_\mathrm{ES}(f; D_R, \ell)$).

\begin{lemma}\label{l:Spectral density perturbation}
Let $f$ be a Gaussian field satisfying the conditions of Theorem \ref{t:fluctuations general} and let $D\subset\R^2$ be an open rectangle centred at the origin. Fix $\ell \in \mathbb{R}$ and let $R_n$, $r_n$ and $a_n$ be sequences of positive numbers such that $R_n\to\infty$, $r_n\to 0$, $a_n\to 0$ and $r_nR_n\to 0$ as $n\to\infty$. Then there exist $c, n_0 > 0$ such that, for all $n > n_0$,
		\begin{align*}
		\mathbb{E}&\big[ \lvert N_\mathrm{ES}(f-a_n;R_n,\ell)-N_\mathrm{ES}(f-a_n h_{r_n};R_n,\ell ) \rvert  \big] < c  a_n r_n^2 R_n^4  .\end{align*}
		The same conclusion holds for level sets.
	\end{lemma}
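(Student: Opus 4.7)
The plan is a Morse-theoretic interpolation combined with a Kac--Rice computation. Write $u := f - a_n$ and $v := f - a_n h_{r_n}$, so that $\tilde v := v - u = a_n(1 - h_{r_n})$ is a deterministic smooth function with $\|\tilde v\|_{C^2(D_{R_n})} \leq C \epsilon_n$ by \eqref{e:h}, where $\epsilon_n := a_n r_n^2 R_n^2$. I would introduce the smooth interpolation $u_t := u + t \tilde v$ for $t \in [0, 1]$, linking $u_0 = u$ to $u_1 = v$. By Assumption~\ref{a:minimal} and standard genericity arguments, almost surely the map $t \mapsto N_\mathrm{ES}(u_t; D_{R_n}, \ell)$ is a step function which changes at only finitely many $t^* \in [0, 1]$, each corresponding to either (i) an interior critical event, i.e.\ a point $x \in D_{R_n}^\circ$ with $\nabla u_{t^*}(x) = 0$ and $u_{t^*}(x) = \ell$, or (ii) a boundary tangency event of $u_{t^*}|_{\partial D_{R_n}}$ at level $\ell$. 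Each event changes $N_\mathrm{ES}$ by at most an absolute constant (by essentially the Morse-theoretic analysis underlying Lemma~\ref{l:morse}), giving
\[
|N_\mathrm{ES}(v; D_{R_n}, \ell) - N_\mathrm{ES}(u; D_{R_n}, \ell)| \leq C(N_{\mathrm{int}} + N_{\mathrm{bdy}}).
\]

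To bound $\mathbb{E}[N_{\mathrm{int}}]$, observe that interior events are precisely the zeros in $D_{R_n}^\circ \times [0,1]$ of the smooth map
\[
F : D_{R_n} \times [0,1] \to \mathbb{R}^3, \qquad F(x,t) := (u_t(x) - \ell, \nabla u_t(x)).
\]
Expanding the Jacobian along the first row, whose $x$-entries vanish at any zero of $F$, yields the factorisation $|\det DF(x,t)| = |\tilde v(x)| \cdot |\det \nabla^2 u_t(x)|$. Applying the Kac--Rice formula, and using Assumption~\ref{a:minimal} to obtain uniform upper bounds on the joint density $p_{F(x,t)}(0)$ and on the conditional expectation $\mathbb{E}\bigl[|\det \nabla^2 u_t(x)| \,\bigm|\, F(x,t) = 0\bigr]$, gives
\[
\mathbb{E}[N_{\mathrm{int}}] \leq C \int_{D_{R_n} \times [0,1]} |\tilde v(x)| \, dx \, dt \leq C \epsilon_n R_n^2 = O(a_n r_n^2 R_n^4).
\]
The boundary term $\mathbb{E}[N_{\mathrm{bdy}}]$ is controlled by an analogous one-dimensional Kac--Rice argument on each side of $\partial D_{R_n}$, yielding $O(\epsilon_n R_n) = O(a_n r_n^2 R_n^3)$, which is of strictly lower order.

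The hard part is the Morse-theoretic reduction: one must rigorously verify that each change of $N_\mathrm{ES}(u_t; D_{R_n}, \ell)$ is charged to a single critical event of bounded contribution, and that the interpolating family $(u_t)_{t\in[0,1]}$ is in generic position almost surely. A technical subtlety arises near the isolated zero of $\tilde v$ at the origin, where the factorisation of $|\det DF|$ degenerates; however, Gaussian non-degeneracy (Assumption~\ref{a:minimal}) ensures that almost surely no critical event of $u_t$ coincides with this point, and the vanishing factor $|\tilde v(x)|$ in the Kac--Rice integrand naturally absorbs any local irregularity. The level-set version of the lemma is proved identically, using the level-set analogue of Lemma~\ref{l:morse} noted in its accompanying remark.
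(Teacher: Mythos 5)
Your proposal is correct and follows essentially the same route as the paper: the interpolation $u_t=u+t\tilde v$ is exactly the family $F-\alpha p$ with $p=a_n(h_{r_n}-1)$, the reduction to counting interior/boundary "critical events" is the paper's quasi-critical point bound (Theorem~\ref{t:Morse} and Proposition~\ref{p:Gaussian perturbation}), and the Kac--Rice computation with the Jacobian factorising as $|\tilde v(x)|\cdot|\det\nabla^2 u_t(x)|$, giving the factor $\|\tilde v\|_{C^0}=O(a_nr_n^2R_n^2)$ times $\mathrm{Area}(D_{R_n})$, is precisely the paper's argument. The "hard part" you flag (genericity of the family and the isolated zero of $\tilde v$ at the origin) is handled in the paper by Bulinskaya-type arguments verifying Assumption~\ref{a:Morse}, including the requirement that no quasi-critical point lies on $\{p=0\}$.
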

	
	We defer the proof of Lemma \ref{l:Spectral density perturbation} until Section \ref{s:Perturbation}. The upshot is that the conclusion of Proposition~\ref{p:smm} also holds when we replace $f-a_n$ with $f-a_nh_{r_n}$.
	
	Our final ingredient for proving Theorem~\ref{t:fluctuations general} is the following elementary lemma:
	\begin{lemma}\label{l:fluctuations}
		Let $X_n$ be a sequence of random variables and $u_n$ a sequence of positive real numbers. If $X_n$ has fluctuations of order at least $\delta_nu_n$ for all positive sequences $\delta_n$ converging to zero arbitrarily slowly, then $X_n$ has fluctuation of order at least $u_n$.
	\end{lemma}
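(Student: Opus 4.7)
The plan is to argue by contraposition via a diagonalisation. Suppose $(X_n)$ does not have fluctuations of order at least $u_n$. Then by Definition~\ref{d:fluctuations}, for every $k\in\mathbb{N}$ and every threshold $N$, there exists $n\ge N$ and real numbers $a\le b$ with $b-a\le u_n/k$ and $\mathbb{P}(a\le X_n\le b)>1-1/k$. Iterating this with $N$ chosen larger than the previously selected index, I extract a strictly increasing subsequence $(n_k)_{k\ge 1}$ together with intervals $[a_k,b_k]$ satisfying
\[
b_k-a_k\le u_{n_k}/k \quad\text{and}\quad \mathbb{P}(a_k\le X_{n_k}\le b_k)>1-1/k.
\]

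The key step is to manufacture a single sequence $\delta_n\to 0$ against which these tight windows still testify to a failure of the fluctuation bound. A convenient choice is $\delta_n=1/\sqrt{k}$ for $n_k\le n<n_{k+1}$ (and $\delta_n=1$ for $n<n_1$), which is positive and tends to zero. Applying the hypothesis to this $\delta_n$ produces constants $c_1',c_2'>0$ and $n_0\ge 1$ such that, for all $n\ge n_0$ and all intervals $[a,b]$ with $b-a\le c_1'\delta_n u_n$, one has $\mathbb{P}(a\le X_n\le b)\le 1-c_2'$. Evaluating at $n=n_k$, the interval $[a_k,b_k]$ has length at most $u_{n_k}/k=\delta_{n_k}u_{n_k}/\sqrt{k}$, which falls below $c_1'\delta_{n_k}u_{n_k}$ once $\sqrt{k}>1/c_1'$, while $1-1/k>1-c_2'$ as soon as $k>1/c_2'$. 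For all sufficiently large $k$, this contradicts the fluctuation bound along $(n_k)$, completing the argument.

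There is no serious obstacle: the proof is a standard diagonal extraction. The only subtlety is choosing $\delta_n$ to decay slowly enough that the tight window widths $u_{n_k}/k$ remain strictly below $c_1'\delta_{n_k}u_{n_k}$, yet still ensure $\delta_n\to 0$ so that the hypothesis actually applies. Any sequence whose value at $n_k$ is $\omega(1/k)$ and $o(1)$ in $k$ would serve equally well; the choice $1/\sqrt{k}$ is just the simplest.
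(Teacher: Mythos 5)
Your proof is correct and follows essentially the same route as the paper's: argue by contraposition, extract a subsequence $(n_k)$ of increasingly tight high-probability windows, and then build a single slowly decaying sequence $\delta_n$ against which those windows contradict the hypothesised fluctuation bound. If anything, you are slightly more careful than the paper in explicitly extending $\delta$ to a full sequence indexed by $n$ (the paper fixes $\delta_k$ along the subsequence and normalises $u_n\equiv 1$ first), but the argument is the same.
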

	\begin{proof}
		We note that by Definition~\ref{d:fluctuations}, $X_n$ has fluctuations of order $u_n$ if and only if $X_n/u_n$ has fluctuations of order $1$. Therefore we may assume $u_n=1$ for all $n$. 
		
		We fix some positive sequence $\delta_k\to 0$ as $k\to\infty$, and suppose that $X_n$ does not have fluctuations of order at least $1$. Then by Definition~\ref{d:fluctuations}, for each $k$ we can find $n_k>n_{k-1}$ and $a_{n_k}<b_{n_k}$ such that $b_{n_k}-a_{n_k}\leq\delta_k^2$ and
		\begin{equation}\label{e:Elementary}
		\mathbb{P}\big(a_{n_k}\leq X_{n_k}\leq b_{n_k}\big)>1-\delta^2_k.
		\end{equation}
		By assumption, $X_{n_k}$ has fluctuations of order $\delta_k$. So there exist absolute constants $c_1,c_2>0$ such that for all $k$ sufficiently large (so that $\delta_k<c_1$) we have $b_{n_k}-a_{n_k}\leq c_1\delta_k$ and hence
		\begin{displaymath}
		\mathbb{P}\big(a_{n_k}\leq X_{n_k}\leq b_{n_k}\big)\leq \delta_k.
		\end{displaymath}
		Provided $k$ is large enough, this contradicts \eqref{e:Elementary}, so we deduce that $X_n$ has fluctuations of order $1$, as required.
	\end{proof}
	
	\begin{proof}[Proof of Theorem \ref{t:fluctuations general}]
		Let $R_n\to\infty$ and $\delta_n\to 0$ be positive, monotone sequences such that $\delta_n^2 R_n\to\infty$. If $g(r)\to\infty$ as $r\to 0$ we also choose $\delta_n$ converging to zero sufficiently slowly that $\delta_n^2 \sqrt{g(\delta_n/R_n)} \to\infty$ as $n\to\infty$. We apply Proposition \ref{p:smm} with $a_n = \delta_n^2  \sqrt{g(\delta_n/R_n)} / R_n$ and deduce that $N_\mathrm{ES}(f;D_{R_n},\ell)$ and $N_\mathrm{ES}(f;D_{R_n},\ell+a_n)$ differ by order at least $R_n^2a_n=\delta_n^2 R_n\sqrt{g(\delta_n/R_n)}$. Applying Lemma \ref{l:Spectral density perturbation} with $r_n=\delta_n/R_n$ and Markov's inequality we have that, for every $\varepsilon > 0$ and $n$ sufficiently large,
\begin{align*}
\mathbb{P}\left( \left\lvert N_\mathrm{ES}(f-a_n;D_{R_n},\ell)-N_\mathrm{ES}(f-a_n h_{r_n};D_{R_n},\ell)\right\rvert  > \varepsilon R_n^2 a_n \right)& < \frac{c a_n r_n^2 R_n^4}{ \varepsilon R_n^2 a_n}\\
&= c \delta_n^2 / \varepsilon  \to  0  . 
\end{align*}
Hence $N_\mathrm{ES}(f;D_{R_n},\ell)$ and $N_\mathrm{ES}(f-a_n h_{r_n};D_{R_n},\ell)$ also differ by order at least $R_n^2a_n$. Moreover, by Proposition \ref{p:cameronmartin} and \eqref{e:dmass2}, there is a $c_0 > 0$ such that
		\begin{multline*}
		d_\mathrm{TV}(N_\mathrm{ES}(f;D_{R_n},\ell),N_\mathrm{ES}(f-a_n h_{r_n};D_{R_n},\ell)) \leq d_\mathrm{TV}(f,f-a_nh_{r_n}) \\
		\leq c_0\|a_nh_{r_n}\|_H  \le c_0 a_n / (2 r_n \sqrt{g(r_n)}) = (c_0/2) \delta_n \to 0
		\end{multline*}
		%\begin{align*}
		%& d_\mathrm{TV}(N_\mathrm{ES}(f;R_n,\ell),N_\mathrm{ES}(f-a_n h_{r_n};R_n,\ell)) \leq d_\mathrm{TV}(f,f-a_nh_{r_n}) \\
		%  & \qquad \qquad \leq c_0\|a_nh_{r_n}\|_H  \le c_0 a_n / (2 r_n \sqrt{g(r_n)}) = (c_0/2) \delta_n \to 0.
		%\end{align*}
where we implicitly restrict all fields to the domain $D_{R_n}$ when calculating the total variation distance. Therefore, by Corollary~\ref{c:chat}, we conclude that $N_\mathrm{ES}(D_{R_n},\ell)$ has fluctuations of order at least $R_n^2a_n$. Since $\delta_n$ can be chosen to converge to zero arbitrarily slowly, Lemma~\ref{l:fluctuations} implies that $N_\mathrm{ES}(D_{R_n},\ell)$ has fluctuations of order at least $R_n\sqrt{g(1/R_n)}$, completing the proof of the theorem.
	\end{proof}
	
	\subsubsection{The Random Plane Wave}
	We now move onto the proof of Theorem \ref{t:fluctuations RPW}. It is known that the RPW has the orthogonal expansion
	\begin{equation}
	\label{e:exp}
	f( x)=\sum_{m\in\mathbb{Z}} a_m J_{\lvert m\rvert}(r)e^{im\theta} ,
	\end{equation}
	where $(r,\theta)$ represents $ x$ in polar coordinates, $J_m$ is the $m$-th Bessel function and $a_m=b_m+ic_m=\overline{a_{-m}}$ with $b_0$, $(\sqrt{2}b_m)_{m\in\mathbb{N}}$ and $(\sqrt{2}c_m)_{m\in\mathbb{N}}$ independent standard (real) Gaussians and $c_0=0$. (This function is clearly Gaussian and can be shown to have the correct covariance structure using Graf's addition theorem for Bessel functions.) We will use a truncation of this expansion to approximately parameterise $f$ using a finite number of random variables, however we do so in a slightly unusual way: let $d_k$ be a sequence of independent standard Gaussian variables, we then represent $a_0=b_0=\sum_{k=1}^\infty 2^{-k/2}d_k$ and for $N\in\mathbb{N}$ define
\begin{equation}
\label{e:exptrunc}
f_N( x)=\sum_{1\leq k\leq N}2^{-k/2}d_kJ_0(r)+\sum_{1\leq\lvert m\rvert\leq N} a_m J_{\lvert m\rvert}(r)e^{im\theta}.
\end{equation}
The first summation here will clearly approximate $a_0J_0(r)$ with exponentially small error as $N\to\infty$. Known inequalities for Bessel functions \cite[Section 8.5, (9)]{watson1995treatise} state that, for all $\alpha \in (0,1)$, $m \ge 0$, and $r <\alpha m$,
\begin{equation}
\label{e:bessel}
\lvert J_m(r)\rvert\leq c_1e^{-c_2 m},
\end{equation}
which means that the terms beyond $m\approx 2R$ are exponentially small inside $B(R)$. So overall we see that $f_N$ will give a very accurate approximation to $f$ on appropriate domains. The reason for approximating $a_0$ incrementally (as opposed to just truncating \eqref{e:exp} after $N$ terms) is technical; it ensures that the remainder $f-f_N$ is non-degenerate at the origin (note that $J_n(0)=0$ for all $n\geq 1$) which will simplify some of our arguments.

 In the next lemma we show that these terms have a bounded effect on the number of excursion sets.
	
	\begin{lemma}\label{l:RPW perturbation}
		Let $f$ be the Random Plane Wave, let $D\subset\R^2$ be an open rectangle centred at the origin with diameter $\mathrm{Diam}(D)$. Fix $\ell^*\in \mathbb{R}$, and $\beta\in(0,1)$. Then there exist $c, n_0>0$ such that, for all $\ell\in[\ell^*-1,\ell^*+1]$, $N \ge n_0$ and $R\cdot\mathrm{Diam}(D)\leq \beta N$,
\begin{displaymath}
\mathbb{E}\left(\lvert N_\mathrm{ES}(f; D_R,\ell)-N_\mathrm{ES}(f_N; D_R,\ell)\rvert\right) < c .
\end{displaymath}
\end{lemma}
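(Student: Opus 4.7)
The plan is to mirror the Morse-theoretic strategy of Lemma~\ref{l:Spectral density perturbation}, exploiting the fact that when $R \cdot \mathrm{Diam}(D) \le \beta N$ with $\beta < 1$, the remainder $f - f_N$ is exponentially small in $C^2(D_R)$. Because the error is exponentially small, we obtain a uniform constant bound rather than one that decays polynomially, which is more than enough for the claimed statement.

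The first step is to show that there exist constants $c_1, c_2 > 0$ depending only on $\beta$ and $\ell^*$ such that
$$\mathbb{P}\bigl(\|f - f_N\|_{C^2(D_R)} > e^{-c_1 N}\bigr) \le e^{-c_2 N}.$$
This follows from the Bessel bound \eqref{e:bessel}, the recursion $2 J_m'(r) = J_{m-1}(r) - J_{m+1}(r)$ (which propagates the decay to derivatives up to order $2$), the independence of the Gaussian coefficients $(a_m)_{|m|>N}$, the deterministic decay $2^{-k/2}$ of the coefficients in the incremental expansion of $a_0$, and a Borell--TIS / union-bound estimate over a fine net in $D_R$.

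The second step is the topological comparison. Set $\varepsilon_N := e^{-c_1 N}$ and $I_N := [\ell - 2\varepsilon_N, \ell + 2\varepsilon_N]$. On the event $E_N := \{\|f - f_N\|_{C^2(D_R)} \le \varepsilon_N\}$, considering the linear homotopy $f_t := (1-t)f_N + t f$, $t\in[0,1]$, a standard Morse-theoretic argument combined with Lemma~\ref{l:morse} gives
$$|N_\mathrm{ES}(f; D_R, \ell) - N_\mathrm{ES}(f_N; D_R, \ell)| \le c_0 \bigl( N_\mathrm{crit}(f; D_R, I_N) + N_\mathrm{tang}(f; D_R, I_N) \bigr),$$
because any change in the excursion-set count along the homotopy must occur at a critical point or boundary tangency of some $f_t$ with value $\ell$, and any such feature is within $\varepsilon_N$ in both position and value of a corresponding critical point or tangency of $f$ with value in $I_N$. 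By Proposition~\ref{p:vb} with $b - a = 4\varepsilon_N$ and $R \le \beta N / \mathrm{Diam}(D)$, the expectation of the right-hand side is $O(R^2 \varepsilon_N + R \sqrt{\varepsilon_N}) = o(1)$. On the complementary event $E_N^c$, we bound both counts crudely by $c R^2 \le c N^2$ (see Remark~\ref{r:ub}) and apply Cauchy--Schwarz against the exponentially small tail, which also gives $o(1)$. Summing the two contributions yields the claimed uniform constant bound, with uniformity in $\ell \in [\ell^* - 1, \ell^* + 1]$ coming from the fact that all estimates depend on $\ell$ only through $I_N$.

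The hardest part will be the Morse-theoretic comparison near $\partial D_R$: we must verify that components of the excursion set cannot be created or destroyed by crossing the boundary along the homotopy without producing a boundary tangency at a level in $I_N$. Since the boundary tangency count is already controlled by Proposition~\ref{p:vb}, this reduces to a careful topological book-keeping. Note also that the incremental approximation of $a_0$ in \eqref{e:exptrunc}, which guarantees that $f - f_N$ is non-degenerate at the origin, is what ensures the Kac--Rice formula underlying Proposition~\ref{p:vb} applies cleanly both to $f$ and to each $f_t$.
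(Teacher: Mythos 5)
Your first step (the exponential $C^2$ bound on $f-f_N$ over $D_R$ when $R\cdot\mathrm{Diam}(D)\le\beta N$) is essentially the paper's Lemma~\ref{l:RPWbound} plus a tail estimate, and your treatment of the bad event $E_N^c$ via Cauchy--Schwarz against the trivial second-moment bound is fine. The gap is in the central topological step. You claim that any critical point (or boundary tangency) of an interpolating field $f_t=(1-t)f_N+tf$ at level $\ell$ ``is within $\varepsilon_N$ in both position and value of a corresponding critical point or tangency of $f$ with value in $I_N$,'' and you then bound the total count by $N_\mathrm{crit}(f;D_R,I_N)+N_\mathrm{tang}(f;D_R,I_N)$. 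This does not follow. If $\nabla f_t(x)=0$ and $f_t(x)=\ell$, all you can conclude is that $\lvert\nabla f(x)\rvert\le\varepsilon_N$ and $\lvert f(x)-\ell\rvert\le\varepsilon_N$; a point where the gradient is merely small need not lie near any genuine critical point of $f$ (locally $f$ could look like $\ell+\varepsilon_N y_1$, which has no critical points at all yet admits quasi-critical points of the homotopy). Passing from ``small gradient'' to ``nearby critical point'' requires quantitative non-degeneracy of the Hessian, which you do not control; and even where such a correspondence holds, you would still need an injectivity/transversality argument to ensure that the number of quasi-critical points over all $t\in[0,1]$ does not exceed the number of critical points of $f$ in the window. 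A related problem is that Lemma~\ref{l:morse} compares one field at two levels and is not the right tool for comparing two different fields at the same level; the comparison you need is the quasi-critical-point inequality of Theorem~\ref{t:Morse}/Proposition~\ref{p:Gaussian perturbation}, whose proof (stratified Morse theory on the rectangle with corners) is a nontrivial part of the paper and cannot be dismissed as ``standard book-keeping.''

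The paper circumvents exactly this difficulty by never routing the estimate through critical points of $f$ in a shrinking level window. Instead it sets $F=f_N-\ell$ and $p=f_N-f$ (independent Gaussian fields, thanks to the incremental expansion of $a_0$) and counts the quasi-critical points directly with a parametric Kac--Rice formula for the map $(x,\alpha)\mapsto(\nabla(F-\alpha p)(x),(F-\alpha p)(x))$ on the three-dimensional domain $D_R\times[0,1]$. The exponential smallness of $p$ then enters through the factor $\lvert p(x)\rvert$ (and $\lvert\partial^k p(x)\rvert$) appearing in the Kac--Rice determinant, which beats the polynomial volume factor $R^2\lesssim N^2$; no identification with critical points of $f$ itself is ever needed. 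To repair your argument you would either have to adopt this parametric Kac--Rice computation, or supply a quantitative stability estimate (controlling the probability that $f$ has near-degenerate almost-critical points at levels near $\ell$), neither of which is present in your proposal.
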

	
	The proof of Lemma \ref{l:RPW perturbation} is deferred to Section \ref{s:Perturbation}. Armed with this lemma we can complete the proof of Theorem \ref{t:fluctuations RPW}.
	
	\begin{proof}[Proof of Theorem \ref{t:fluctuations RPW}]
		Let $R_n \to \infty$\ be a positive sequence, and $a_n=\delta_n R_n^{-1/2}$ for some sequence $\delta_n>0$ which converges to zero slowly enough that $R_na_n\to\infty$ (we will eventually allow $\delta_n$ to converge to zero arbitrarily slowly). Applying Proposition~\ref{p:smm} shows that $N_\mathrm{ES}(f;D_{R_n},\ell)$ and $N_\mathrm{ES}(f;D_{R_n},\ell+a_n)$ differ by order at least $R_n^2a_n= \delta_nR_n^{3/2}$.
		
Now choose $m_n=\lceil 2\mathrm{Diam}(D)\cdot R_n\rceil$ (where $\lceil x\rceil$ denotes the least integer greater than or equal to $x$). Applying Lemma~\ref{l:RPW perturbation} and Markov's inequality shows that for any $\epsilon>0$
		\[ \mathbb{P}\left(\lvert N_\mathrm{ES}(f;D_{R_n},\ell)-N_\mathrm{ES}(f_{m_n};D_{R_n},\ell)\rvert> \epsilon R_n^2 a_n \right)\to 0 \]
and
\[ \mathbb{P}\left(\lvert N_\mathrm{ES}(f;D_{R_n},\ell+a_n)-N_\mathrm{ES}(f_{m_n};D_{R_n},\ell+a_n)\rvert> \epsilon R_n^2 a_n \right)\to 0 \]
as $n\to\infty$, so we conclude that $N_\mathrm{ES}(f_{m_n};D_{R_n},\ell)$ and $N_\mathrm{ES}(f_{m_n}; D_{R_n}, \ell+a_n)$ also differ by order at least $\delta_nR_n^{3/2}$.
		
For $n$ sufficiently large (so that $\ell+a_n$ is bounded away from zero), $f_{m_n}(t)\geq \ell+a_n$ if and only if $\frac{\ell}{\ell+a_n}f_{m_n}(t)\geq\ell$. Therefore
\begin{displaymath}
N_\mathrm{ES}(f_{m_n};D_{R_n},\ell+a_n)=N_\mathrm{ES}\left( \frac{\ell}{\ell+a_n}f_{m_n};D_{R_n},\ell \right).
\end{displaymath}
Since $f_{m_n}$ is parametrised by $3m_n$ independent standard Gaussian variables we see that
\begin{multline*}
d_\mathrm{TV}\left(N_\mathrm{ES}(f_{m_n};D_{R_n}, \ell),N_\mathrm{ES}(f_{m_n};D_{R_n},\ell+a_n) \right) \\
\begin{aligned}
&= d_\mathrm{TV} \left(N_\mathrm{ES}(f_{m_n};D_{R_n},\ell),N_\mathrm{ES} \left(\frac{\ell}{\ell+a_n}f_{m_n};D_{R_n},\ell \right) \right) \\
&\leq d_\mathrm{TV} \left( \mathcal{N}( 0,I_{3m_n}),\mathcal{N} \left(  0,\left(\frac{\ell}{\ell+a_n}\right)^2I_{3m_n} \right) \right).
\end{aligned}
\end{multline*}
By Pinsker's inequality, the square of the above quantity is at most
\begin{equation}\label{e:KL explicit}
\frac{1}{2}d_\mathrm{KL}\left(\mathcal{N} \left( 0,\left(\frac{\ell}{\ell+a_n}\right)^2I_{3m_n}\right)\;\middle|\middle|\;\mathcal{N}\left( 0,I_{3m_n} \right) \right)
\end{equation}
where $d_\mathrm{KL}$ denotes the Kullback-Leibler divergence defined by \eqref{e:KL definition}. If $P=\mathcal{N}(0,\Sigma_1)$ and $Q=\mathcal{N}(0,\Sigma_2)$ are two centred $k$-dimensional Gaussian measures, then it is a standard result that
		\begin{displaymath}
		d_\mathrm{KL}\left(P\;||\;Q\right)=\frac{1}{2}\left(\log\left(\frac{\det\Sigma_2}{\det\Sigma_1}\right)-k+\mathrm{Tr}\left(\Sigma_2^{-1}\Sigma_1\right)\right).
		\end{displaymath}
Therefore the quantity in \eqref{e:KL explicit} is equal to
		\begin{align*}
		\frac{3m_n}{4}\left(\left(\frac{\ell}{\ell+a_n}\right)^2-1-\ln \left(\left(\frac{\ell}{\ell+a_n}\right)^2 \right) \right)\leq \frac{c_1}{4}(6R_n+1)a_n^2
\end{align*}
for some constant $c_1>0$ depending only on $\ell$ (the inequality follows from a Taylor expansion of the logarithm). This bound converges to zero as $n\to\infty$ and so we can apply Corollary~\ref{c:chat} and conclude that $N_\mathrm{ES}(f_{m_n};D_{R_n},\ell)$ has fluctuations of order at least~$\delta_nR_n^{3/2}$. Applying Lemma~\ref{l:RPW perturbation}, the same conclusion is true for $N_\mathrm{ES}(f; D_{R_n},\ell)$. Since $\delta_n$ can be chosen to converge to zero arbitrarily slowly, Lemma~\ref{l:fluctuations} implies that $N_\mathrm{ES}(f; D_{R_n},\ell)$ has fluctuations of order at least $R_n^{3/2}$ as required.
	\end{proof}
	
	\subsection{Proof of Proposition~\ref{t:Delta mass zero}}\label{ss:spectral atom}
	We now give a proof of Proposition~\ref{t:Delta mass zero} that essentially follows from the law of large numbers for excursion set components (Theorem~\ref{t:main level}). We note that it is also possible to prove this result using the methods from Section~\ref{s:lower bound fluctuations}.
	
	\begin{proof}[Proof of Proposition \ref{t:Delta mass zero}]
		Let $f$ be a Gaussian field satisfying Assumption~\ref{a:spectral atom}, which means it can be represented as $f=g+\sqrt{\alpha}Z$ where $g$ has spectral measure $\nu^*$ and $Z$ is an independent standard Gaussian variable. We note that by Theorem~\ref{t:integral equality}, $c_\mathrm{ES}(\nu^*,\ell)$ is continuous in $\ell$ and tends to zero as $\ell\to\infty$. Furthermore, by \cite[Corollary~1.12]{Beliaev2018Number} $c_\mathrm{ES}(\nu^*,\ell)$ is positive for some $\ell\in\R$ (actually for any $\ell>0$). Combining these facts we see that there exists $\epsilon>0$ and open sets $U_1,U_2\subset\R$ such that for all $\ell_1\in U_1$ and $\ell_2\in U_2$
		\begin{displaymath}
		c_\mathrm{ES}(\nu^*,\ell_1)>4\epsilon\quad\text{and}\quad c_\mathrm{ES}(\nu^*,\ell_2)<\epsilon.
		\end{displaymath}
		We now choose an arbitrary $\ell\in\R$ and a positive, increasing sequence $R_n\to\infty$. Using the independence of $Z$ and $g$, and denoting the standard Gaussian probability density by $\phi$, we see that
		\begin{multline*}
		\mathbb{P}\left(N_\mathrm{ES}(f;D_{R_n},\ell)\geq 3\epsilon\mathrm{Area}(D)R_n^2\right)\\
		\begin{aligned}
		&=\mathbb{P}\left(N_\mathrm{ES}(g;D_{R_n},\ell-\sqrt{\alpha}Z)\geq 3\epsilon\mathrm{Area}(D)R_n^2\right)\\
		&=\int_\R\mathbb{P}\left(N_\mathrm{ES}(g;D_{R_n},\ell-\sqrt{\alpha}x)\geq 3\epsilon\mathrm{Area}(D)R_n^2\right)\phi(x)\;dx\\
		&=\int_\R\mathbb{P}\left(N_\mathrm{ES}(g;D_{R_n},u)\geq 3\epsilon\mathrm{Area}(D)R_n^2\right)\phi\left(\frac{\ell-u}{\sqrt{\alpha}}\right)\frac{1}{\sqrt{\alpha}}\;du\\
		&\geq\int_{U_1}\mathbb{P}\left(N_\mathrm{ES}(g;D_{R_n},u)\geq 3\epsilon\mathrm{Area}(D)R_n^2\right)\phi\left(\frac{\ell-u}{\sqrt{\alpha}}\right)\frac{1}{\sqrt{\alpha}}\;du.
		\end{aligned}
		\end{multline*}
		Since $g$ satisfies Assumption~\ref{a:minimal}, Theorem~\ref{t:main level} implies that for each $u\in U$, the probability in the integrand above converges to one as $n\to\infty$. Applying Fatou's lemma (and the fact that $U$ is open, so has positive Lebesgue measure) we see that the integral above is bounded below by a positive constant for all sufficiently large $n$.
		
Applying an identical argument to $U_2$ shows that $\mathbb{P}\left(N_\mathrm{ES}(f;D_{R_n},\ell)\leq2\epsilon\mathrm{Area}(D)R_n^2\right)$ is bounded below by a positive constant for all sufficiently large $n$. These two bounds show that $N_\mathrm{ES}(f;D_{R_n},\ell)$ has fluctuations of order at least $R_n^2$ (by definition). Since $\ell\in\R$ and $R_n\uparrow\infty$ were arbitrary, combining this with the trivial upper bound on the variance~\eqref{e:ub} (see also Remark~\ref{r:ub}) completes the proof of the result. Identical arguments apply to level sets.
	\end{proof}
	
	%%%%%%%%

	\section{Perturbation arguments}
	\label{s:Perturbation}
In this section we prove Lemmas~\ref{l:Spectral density perturbation} and~\ref{l:RPW perturbation}, thus completing the proof of all results in the paper.
	
We begin with some heuristics. Let $F$ be a realisation of a random field on a compact domain $D$. Our aim is to control the expected difference between the number of components of $\{F\geq 0\}$ and $\{F-p\geq 0 \}$, where $p$ is a small (possibly random) perturbation. If $p$ is a constant function taking the value $c>0$, then the standard methods of Morse theory show that the difference between these two quantities is at most the number of critical points of $F$ with level between $0$ and $c$ (because the excursion set $\{F\geq\ell\}$ varies continuously with $\ell$ unless passing through a critical point of $F$, in which case the number of components changes by at most one). Since the number of critical points is a local quantity, we can use the Kac-Rice formula to bound its mean.

This same reasoning can be applied to more general perturbations $p$. Assuming some regularity of $F$ and $p$ (which will be specified below) the number of components of $\{F-\alpha p\geq 0\}$ changes continuously with $\alpha$ unless passing through a value at which $0$ is a critical level of  $F-\alpha p$, and it can be shown that at such points the number of components changes by at most one. Therefore the difference in the number of excursion sets is bounded above by the number of points at which $F-\alpha p=0$ and $\nabla(F-\alpha p)=0$ for some $\alpha\in[0,1]$ (plus an analogous term which controls boundary effects, as we work with a bounded domain). Under our non-degeneracy assumptions, with probability one there is at most one such point for each $\alpha$, and for all but finitely many values of $\alpha$ there are no such points.  Since the number of these points is still a local quantity, an application of the Kac-Rice formula will yield Lemmas \ref{l:Spectral density perturbation} and \ref{l:RPW perturbation}.
	
	%\smallskip
Let us formalise the concepts just described. Let $\mathcal{D}\subset \R^2$ be an open rectangle and let $F,p$ be $C^2$ functions defined on a neighbourhood of $\overline{\mathcal{D}}$. Let $\mathcal{C}$ denote the set of corners of $\overline{\mathcal{D}}$ and $\mathcal{E}:=\partial\mathcal{D}\backslash\mathcal{C}$ denote the edges of $\overline{\mathcal{D}}$. We refer to $\mathcal{D}$, $\mathcal{E}$ and $\mathcal{C}$ as the strata of $\overline{\mathcal{D}}$. We say that $x$ in a neighbourhood of $\overline{\mathcal{D}}$ is a quasi-critical point of $(F,p)$ at level $\alpha\in[0,1]$ if $(F-\alpha p)(x)=0$ and $x$ is a critical point of $F-\alpha p$ restricted to the stratum of $\overline{\mathcal{D}}$ (or $\R^2\backslash\overline{\mathcal{D}}$) containing $x$. So for $x\in\mathcal{D}\cup(\R^2\backslash\overline{\mathcal{D}})$, this condition says that $(F-\alpha p)(x)=0$ and $\nabla(F-\alpha p)(x)=0$. For $x\in\mathcal{E}$ this means that $(F-\alpha p)(x)=0$ and $\nabla(F-\alpha p)(x)$ is normal to $\partial\mathcal{D}$ at $x$. For $x\in\mathcal{C}$ this just says that $(F-\alpha p)(x)=0$; the other condition holds vacuously. We say that a quasi-critical point $x$ at level $\alpha$ is non-degenerate if $x$ is a non-degenerate critical point of $F-\alpha p$ restricted to the stratum containing $x$ (so for example, if $x\in\mathcal{D}$, this just says that $\det\nabla^2(F-\alpha p)(x)\neq 0$).

We let $N_\mathrm{QC}(F;p,\overline{\mathcal{D}})$ denote the number of quasi-critical points of $(F,p)$ in $\overline{\mathcal{D}}$ (at any level $\alpha\in[0,1]$). If we replace $\overline{\mathcal{D}}$ with a stratum ($\mathcal{D}$, $\mathcal{E}$ or $\mathcal{C}$) in this expression, then we mean the number of quasi-critical points restricted to that stratum. For $x \in \mathcal{E}$, let $v_\partial(x)$ and $v_{\bar \partial}(x)$ denote respectively the unit vectors in the tangent and normal directions to $\partial\mathcal{D}$, and let $\nabla_\partial$ and $\nabla_{\bar \partial}$ denote the derivatives in these respective directions.

\begin{assumption}\label{a:Morse}
Let $\mathcal{D}\subset\R^2$ be an open rectangle. Let $F$ and $p$ be $C^2$ functions defined on a neighbourhood of $\overline{\mathcal{D}}$ which satisfy the following:
\begin{enumerate}
\item The quasi-critical points of $(F,p)$ in a neighbourhood of $\overline{\mathcal{D}}$ are non-degenerate, all occur at distinct levels and are not contained in $\{p=0\}$;
\item If $x\in\partial\mathcal{D}$ is a quasi-critical point of $(F,p)$ at level $\alpha$ then $\nabla(F-\alpha p)(x)\neq 0$; if, in addition, $x\in\mathcal{C}$ then $\nabla(F-\alpha p)(x)$ is not parallel to either edge of $\mathcal{D}$ which joins at $x$.
\end{enumerate}
\end{assumption}
\begin{theorem}\label{t:Morse}
Let $\mathcal{D}\subset\R^2$ be an open rectangle and let $(F,p)$ satisfy Assumption~\ref{a:Morse}, then
\begin{displaymath}
\left\lvert N_\mathrm{ES}\left(F;\overline{\mathcal{D}},0\right)-N_\mathrm{ES}\left(F-p;\overline{\mathcal{D}},0\right)\right\rvert\leq  N_\mathrm{QC}\left(F,p,\overline{\mathcal{D}}\right).
\end{displaymath}
The same conclusion holds on replacing $N_\mathrm{ES}$ with $N_\mathrm{LS}$.
\end{theorem}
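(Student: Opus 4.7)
The plan is a one-parameter Morse-theoretic deformation argument. I would consider the family $G_\alpha := F - \alpha p$ for $\alpha \in [0,1]$, which interpolates between $G_0 = F$ and $G_1 = F - p$, and track how the integer-valued function $\alpha \mapsto N_\mathrm{ES}(G_\alpha; \overline{\mathcal{D}}, 0)$ behaves as $\alpha$ varies. Assumption~\ref{a:Morse}(1) guarantees that the quasi-critical points of $(F,p)$ in $\overline{\mathcal{D}}$ are finite in number and occur at distinct parameter values $\{\alpha_1 < \cdots < \alpha_n\}$, where $n = N_\mathrm{QC}(F,p,\overline{\mathcal{D}})$. The theorem will then reduce to two facts: (i) the count is constant on each open subinterval of $[0,1] \setminus \{\alpha_1,\ldots,\alpha_n\}$; and (ii) at each $\alpha_i$ the count changes by at most one.

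For (i), I would apply the implicit function theorem uniformly across a quasi-critical-free interval. The absence of interior quasi-critical points ensures that $\{G_\alpha = 0\}$ is a smooth submanifold of $\mathcal{D}$ near each of its points; the absence of edge quasi-critical points rules out tangencies of the level line with edges; and the second clause of Assumption~\ref{a:Morse}(2) secures transversality at corners. A standard ambient-isotopy argument (gluing together the IFT parametrisations) then deforms $\{G_\alpha \geq 0\} \cap \overline{\mathcal{D}}$ continuously in $\alpha$, preserving the combinatorial structure and in particular the count of components contained in $\mathcal{D}$.

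The main obstacle is (ii), which requires a careful local analysis at each quasi-critical point $x_0$ according to which stratum it occupies. In the interior case $x_0 \in \mathcal{D}$, differentiating $\alpha \mapsto G_\alpha(x(\alpha))$ along the IFT branch of critical points shows that the critical value of $G_\alpha$ crosses zero transversally with speed $-p(x_0) \neq 0$ (by Assumption~\ref{a:Morse}(1)), so classical two-dimensional Morse theory identifies the topological transition as a single handle attachment: at a local maximum a component is created or destroyed, at a saddle two components merge or split, and at a local minimum a \emph{hole} opens or closes without affecting the component count. In the edge case $x_0 \in \mathcal{E}$, the restriction of $G_{\alpha_i}$ to the edge has a non-degenerate critical point at level $0$, and the non-zero normal derivative (Assumption~\ref{a:Morse}(2)) yields a local model in which a level-line tangency with $\partial\mathcal{D}$ resolves, causing a single component to gain or lose a boundary intersection. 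In the corner case $x_0 \in \mathcal{C}$, the gradient conditions in Assumption~\ref{a:Morse}(2) provide a local normal form in which a small corner piece of the excursion set appears or disappears. In each case the count changes by at most one, so summing over $i = 1, \ldots, n$ gives the claimed bound.

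The argument for level sets is essentially identical, with the interior analysis modified so that a maximum or minimum creates or destroys a single nodal loop and a saddle merges or splits two loops, and with the boundary and corner cases unchanged. The delicate step in both proofs is the stratified Morse analysis at edge and corner quasi-critical points, where the standard smooth-manifold theory must be adapted to a manifold with corners; the explicit non-degeneracy conditions bundled into Assumption~\ref{a:Morse}(2) are precisely what make the required local normal forms tractable.
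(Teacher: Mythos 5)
Your proposal is correct and follows essentially the same route as the paper: constancy of the component count on quasi-critical-free parameter intervals (which the paper implements via an explicit gradient flow rather than gluing implicit-function-theorem parametrisations), plus a stratified local analysis showing the count changes by at most one at each quasi-critical level (which the paper carries out by observing that near such a point $F-\alpha p\ge 0$ is equivalent to $F/p\ge\alpha$ and invoking the Morse lemma for manifolds with corners to get the local normal forms you describe). Your observation that the critical value crosses zero with speed $-p(x_0)\neq 0$ is exactly the role played by the hypothesis that quasi-critical points avoid $\{p=0\}$ in Assumption~\ref{a:Morse}(1).
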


This theorem is only a slight generalisation of known results from Morse theory and so its proof is given in Appendix~\ref{a:Appendix}.

We can now state our main perturbation result.
	
\begin{proposition}\label{p:Gaussian perturbation}
Let $\mathcal{D}\subset\R^2$ be an open rectangle and $\mathcal{D}_+$ be a fixed compact neighbourhood of $\overline{\mathcal{D}}$. Suppose that $F$ and $p$ are independent $C^3$-smooth planar Gaussian fields defined on $\mathcal{D}_+$ satisfying the following conditions:
\begin{enumerate}
\item For each $x, y \in \mathcal{D}_+$, $x \neq y$, the Gaussian vector $(F(x),F(y),\nabla F(x),\nabla F(y))$ is non-degenerate;
\item For each $x\in\mathcal{D}_+$, the Gaussian vector $(\nabla F(x),\nabla^2 F(x))$ is non-degenerate;
\item For each $x \in\mathcal{E}$, the vectors $(F(x), \nabla_\partial F(x), \nabla_{\partial} \nabla_{\bar{\partial}} F(x)  )$ and $(F(x), \nabla_\partial F(x), \nabla^2_{\partial} F(x)  )$
are non-degenerate (as Gaussians);
\item Either:
\begin{enumerate}
\item $p$ is deterministic and the set $\{p=0\}\cap \mathcal{D}_+$ consists of a finite union of isolated points, or
\item For each $x \in \mathcal{D}_+$, the Gaussian vector $(p(x),\nabla p(x))$ is non-degenerate.
\end{enumerate}
\end{enumerate}
Then with probability one
\begin{displaymath}
\left\lvert N_\mathrm{ES}\left(F;\mathcal{D},0\right)-N_\mathrm{ES}\left(F-p;\mathcal{D},0\right)\right\rvert
\leq N_\mathrm{QC}\left(F;p,\overline{\mathcal{D}}\right).
\end{displaymath}
The same conclusion holds on replacing $N_\mathrm{ES}$ with $N_\mathrm{LS}$.
\end{proposition}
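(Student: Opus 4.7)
The plan is to deduce Proposition~\ref{p:Gaussian perturbation} from the deterministic Morse-theoretic Theorem~\ref{t:Morse} by verifying that Assumption~\ref{a:Morse} holds with probability one for the random pair $(F, p)$. Since Assumption~\ref{a:Morse} is a finite conjunction of conditions asserting the absence of certain exceptional configurations, it suffices to show each one holds almost surely and take a finite intersection; Theorem~\ref{t:Morse} then applies pointwise on this full-measure event to yield the bound.

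Each condition in Assumption~\ref{a:Morse} can be phrased as the non-vanishing of an over-determined system of equations in the $2$-jet $(F, \nabla F, \nabla^2 F, p, \nabla p, \nabla^2 p)$, parameterised by $(x, \alpha) \in \mathcal{D}_+ \times [0,\infty)$ (or by pairs $(x, y, \alpha)$ for the coincident-levels condition). The key tool is a Bulinskaya-type lemma: if $\Psi : A \to \mathbb{R}^k$ is a $C^1$ map derived from smooth Gaussian data on a compact domain $A \subset \mathbb{R}^n$ with $n < k$, and the joint density of $\Psi(z)$ is uniformly bounded on $A$, then almost surely $\Psi^{-1}(0) = \emptyset$. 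The hypotheses (1)--(4) of the proposition supply precisely the required density bounds, and the independence of $F$ and $p$ ensures the densities remain uniformly bounded in the parameter $\alpha$ (being convolutions of Gaussian densities).

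Concretely, non-degeneracy of interior quasi-critical points (part of Assumption~\ref{a:Morse}(1)) is established using the map
\[\Psi(x, \alpha) = \bigl((F-\alpha p)(x),\ \nabla(F-\alpha p)(x),\ \det \nabla^2(F-\alpha p)(x)\bigr) \in \mathbb{R}^4\]
on the $3$-dimensional domain $\mathcal{D} \times [0, \infty)$, using hypothesis (2). Distinctness of levels is handled by considering the $6$-equation system encoding that two distinct points $x \neq y$ are both quasi-critical at the same level $\alpha$, on the $5$-dimensional domain $\{(x,y,\alpha) : x \neq y\} \subset \mathcal{D}_+^2 \times [0,\infty)$, using hypothesis (1). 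Avoidance of $\{p=0\}$ splits into two cases: for deterministic $p$ (case (4a)), we apply Bulinskaya pointwise at each of the finitely many zeros of $p$ using independence of $F$ together with hypothesis (2); for random $p$ (case (4b)), we append the equation $p(x) = 0$ to the interior quasi-critical system to obtain $4$ equations on a $3$-dimensional domain. The boundary conditions in Assumption~\ref{a:Morse}(2) are handled analogously by constructing over-determined systems on the one-dimensional edges and zero-dimensional corners of $\partial\mathcal{D}$, invoking hypothesis (3) for the edges.

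The main technical obstacle is careful bookkeeping: ensuring that the joint densities remain uniformly bounded in $\alpha \in [0,\infty)$ across all strata, and verifying that the two vectors appearing in hypothesis (3) translate correctly into density bounds for the appropriate combinations of tangential and normal derivatives (the first vector controls non-vanishing of the normal gradient at edge quasi-critical points, and the second controls their non-degeneracy along the edge). Once this verification is carried out, each item in Assumption~\ref{a:Morse} is seen to fail on a probability-zero event, and the finite union of these events has probability zero, giving the full-measure event on which Theorem~\ref{t:Morse} applies directly.
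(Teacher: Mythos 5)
Your overall strategy is the same as the paper's: reduce the proposition to checking that $(F,p)$ satisfies Assumption~\ref{a:Morse} almost surely, verify each condition by applying a Bulinskaya-type lemma to an over-determined system built from the jets of $F$ and $p$ parameterised by $(x,\alpha)$ (or $(x,y,\alpha)$), and then invoke Theorem~\ref{t:Morse} on the resulting full-measure event. Most of the systems you describe (coincident levels via six equations on a five-dimensional domain, avoidance of $\{p=0\}$ in both cases (4a) and (4b), the boundary conditions on edges and corners) match the paper's maps $g_1$--$g_8$, including the convolution argument for uniformity in $\alpha$ and, implicitly, the exhaustion of $\{x\neq y\}$ by the compact sets $\{\lvert x-y\rvert\geq 1/n\}$.

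There is, however, a genuine gap in your treatment of the non-degeneracy of interior quasi-critical points. You append $\det\nabla^2(F-\alpha p)(x)$ as the fourth coordinate of $\Psi$ and assert that hypothesis (2) supplies the uniform density bound near $0\in\R^4$ required by the $C^1$ over-determined Bulinskaya lemma. It does not: hypothesis (2) bounds the density of the \emph{Gaussian} vector $(\nabla F(x),\nabla^2 F(x))$, whereas $\det\nabla^2(F-\alpha p)(x)$ is a quadratic polynomial in Gaussian variables whose density can be singular at the origin (compare the logarithmic singularity at $0$ of the density of a product of two independent standard Gaussians), so boundedness of the joint density of $\Psi$ near $0$ is not a consequence of the stated hypotheses. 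The paper avoids this by using a different version of Bulinskaya's lemma (\cite[Proposition~6.5]{azais2009level}): it requires only that the \emph{square} system $g_5=(\nabla(F-\alpha p),\,F-\alpha p)$ be a.s.\ $C^2$ with uniformly bounded three-dimensional density, and concludes directly that almost surely there is no $(x,\alpha)$ with $g_5(x,\alpha)=0$ and $\det\nabla^2(F-\alpha p)(x)=0$. (The analogous step on the edges is unproblematic, since there the non-degeneracy condition involves $\nabla^2_\partial(F-\alpha p)$, which is itself Gaussian, so your recipe works with the second vector of hypothesis (3).) Two smaller slips: the parameter $\alpha$ should range over the compact interval $[0,1]$ rather than $[0,\infty)$, since compactness is needed for the uniform density bounds, and the interior conditions must be verified on a neighbourhood of $\overline{\mathcal{D}}$ (the paper works on $\mathcal{D}_+$), not merely on $\mathcal{D}$, as Assumption~\ref{a:Morse} demands.
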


Given Theorem~\ref{t:Morse}, the proof of this result is a straightforward application of Bulinskaya's lemma to various combinations of $F$, $p$ and their first two derivatives.
	
	\begin{proof}
		It is sufficient to verify that $F$ and $p$ almost surely satisfy both parts of Assumption~\ref{a:Morse}, since then Theorem~\ref{t:Morse} yields the result.
		
		%\smallskip
		\noindent (1). We first verify that, almost surely, the quasi-critical points of $(F,p)$ are at distinct levels. We define $\mathcal{D}_n=\{(x,y)\in\mathcal{D}_+^2:\lvert x-y\rvert\geq 1/n\}\times[0,1]$ and $g_1:\mathcal{D}_n\to\R^6$ by
		\begin{displaymath}
		g_1(x,y,\alpha)=\begin{pmatrix}
		\nabla(F-\alpha p)(x)\\
		\nabla(F-\alpha p)(y)\\
		(F-\alpha p)(x)\\
		(F-\alpha p)(y)
		\end{pmatrix}.
		\end{displaymath}
		Bulinskaya's lemma (\cite[Lemma 11.2.10]{RFG}) states that $g_1$ almost surely does not hit $ 0\in\R^6$ at any point in $\mathcal{D}_n$ provided it is almost surely $C^1$ and that the density of $g_1(x,y,\alpha)$ is bounded on a neighbourhood of $ 0$ uniformly in $\mathcal{D}_n$. We know that $g_1\in C^1(\mathcal{D}_n)$ by assumption and so turn to the second condition. Since $F$ and $p$ are independent, the density of $g_1(x,y,\alpha)$ can be given by a convolution over the densities of
		\begin{equation}
		\label{e:41}
		(\nabla F(x),\nabla F(y),F(x),F(y)) \quad\quad\text{and}\quad\quad\alpha(\nabla p(x),\nabla p(y),p(x),p(y)),
		\end{equation}
		and therefore it is sufficient to show that the density of either of these vectors is bounded. By assumption, the covariance matrix of the first vector in \eqref{e:41} is non-degenerate for every $x\neq y$, and by continuity the determinant of this covariance matrix is bounded away from zero on the compact set $\mathcal{D}_n$. Since this vector is Gaussian, this implies that its density is uniformly bounded above and allows us to apply Bulinskaya's lemma. Taking a countable union over $n$ completes the proof for quasi-critical points in $\mathcal{D}\cup(\mathcal{D}_+\backslash\overline{\mathcal{D}})$.
		
		Applying the same argument to $g_2:\mathcal{E}^2\times[0,1]\to\R^4$ and $g_3:\mathcal{E}\times\mathcal{D}_+\times[0,1]\to\R^5$ defined by
		\begin{displaymath}
		g_2:=\begin{pmatrix}
		\nabla_\partial(F-\alpha p)(x)\\
		\nabla_\partial(F-\alpha p)(y)\\
		(F-\alpha p)(x)\\
		(F-\alpha p)(y)
		\end{pmatrix}
		\quad\quad\text{and}\quad\quad
		g_3:=\begin{pmatrix}
		\nabla_\partial(F-\alpha p)(x)\\
		\nabla(F-\alpha p)(y)\\
		(F-\alpha p)(x)\\
		(F-\alpha p)(y)
		\end{pmatrix},
		\end{displaymath}
		proves that, almost surely, the quasi-critical points of $(F,p)$ in $\mathcal{D}\cup(\mathcal{D}_+\backslash\overline{\mathcal{D}})$ and $\mathcal{E}$ are all at distinct levels. Considering similar functions shows that the quasi-critical points in the corners $\mathcal{C}$ also occur at disjoint levels, but we omit this for brevity.
		
		Applying the arguments above to $g_4:\mathcal{E}\times[0,1]\to\R^3$ given by
		\begin{displaymath}
		g_4:=\begin{pmatrix}
		\nabla^2_\partial(F-\alpha p)(x)\\
		\nabla_\partial(F-\alpha p)(x)\\
		(F-\alpha p)(x)
		\end{pmatrix}
		\end{displaymath}
		shows that the quasi-critical points of $(F,p)$ in $\mathcal{E}$ are non-degenerate almost surely. A slightly different version of Bulinskaya's lemma (\cite[Proposition 6.5]{azais2009level}) states that since $g_5:\mathcal{D}_+\times[0,1]\to\R^3$ defined by
		\begin{displaymath}
		g_5:=\begin{pmatrix}
		\nabla(F-\alpha p)(x)\\
		(F-\alpha p)(x)
		\end{pmatrix}
		\end{displaymath}
		is almost surely $C^2$ and has a uniformly bounded univariate probability density, there is almost surely no $(x,\alpha)\in \mathcal{D}_+\times[0,1]$ such that $g_5(x,\alpha)=0$ and $\det\nabla^2(F-\alpha p)(x)=0$. Hence the quasi-critical points of $(F,p)$ in $\mathcal{D}\cup(\mathcal{D}_+\backslash\overline{\mathcal{D}})$ are non-degenerate almost surely. Finally, any quasi-critical points in $\mathcal{C}$ are vacuously non-degenerate.
		
		It remains to show that $(F,p)$ has no quasi-critical points in $\{p=0\}$. In the case that $p$ is deterministic, we apply Bulinskaya's lemma to $g_5$ restricted to $(\mathcal{D}_+\cap\{p=0\})\times[0,1]$ and $g_6:(\partial\mathcal{D}\cap\{p=0\})\times[0,1]\to\R^2$ defined by
		\begin{displaymath}
		g_{6}:=\begin{pmatrix}
		\nabla_\partial(F-\alpha p)(x)\\
		(F-\alpha p)(x)
		\end{pmatrix},
		\end{displaymath}
		which gives the result. (Technically in the first case we need only consider one component of $\nabla(F-\alpha p)$ since our domain is one-dimensional.) In the case that the variance of $p(x)$ is non-zero for each $x$ (and so bounded away from zero by compactness), we define ${g_{7}:\mathcal{D}_+\times[0,1]\to\R^4}$ and $g_{8}:\mathcal{E}\times[0,1]\to\R^3$ by
		\begin{displaymath}
		g_{7}:=\begin{pmatrix}
		\nabla(F-\alpha p)(x)\\
		F(x)\\
		p(x)
		\end{pmatrix}
		\quad\quad\text{and}\quad\quad
		g_{8}:=\begin{pmatrix}
		\nabla_\partial(F-\alpha p)(x)\\
		F(x)\\
		p(x)
		\end{pmatrix}.
		\end{displaymath}
		Once again, Bulinskaya's lemma (along with the convolution argument) gives the result. For the four points in $\mathcal{C}$ we note that $p$ is non-zero almost surely.
		
		%\smallskip
		\noindent (2). Applying the same arguments to $g_{5}$ restricted to $\mathcal{E}\times[0,1]$ shows that $(F,p)$ almost surely has no quasi-critical points in $\mathcal{E}$ such that $\nabla(F-\alpha p)=0$. Similarly restricting $g_{6}$ to $\mathcal{C}\times[0,1]$ (where we define $\nabla_\partial$ in the appropriate way for each point) proves the second part of this condition.
	\end{proof}

We now prove Lemmas~\ref{l:Spectral density perturbation} and~\ref{l:RPW perturbation}. We state these proofs exclusively for excursion sets; the proofs for level sets are identical. The simpler case is Lemma \ref{l:Spectral density perturbation}, since the perturbation is deterministic.
	\begin{proof}[Proof of Lemma \ref{l:Spectral density perturbation} (given Proposition~\ref{p:Gaussian perturbation})]
		For fixed $n$ sufficiently large, we define 
		\begin{displaymath}
		F= f - \ell - a_n\qquad\text{and}\qquad p= a_n (h_{r_n} - 1).
		\end{displaymath}
		Then by the definition of $h_{r_n}$ in \eqref{e:h definition}, the zero set of $p|_{D_{R_n}}$ consists of a single point at the origin (this also requires $n$ to be sufficiently large). Hence we may apply Proposition~\ref{p:Gaussian perturbation} to the functions $F$ and $p$ on the domain $\mathcal{D} = D_{R_n}$ ($f$ satisfies the conditions of the proposition by Assumptions~\ref{a:minimal} and~\ref{a:spectral mass}), and so it is sufficient to prove that there exists a $c > 0$ such that, for $n$ sufficiently large,
\begin{equation}
\label{e:qc}
\mathbb{E} \left( N_\mathrm{QC}\left(F;D_{R_n}, p\right)\right)  < c  a_n r_n^2 R_n^4  \quad \text{and}  \quad  \mathbb{E} \left( N_\mathrm{QC}\left(F;\partial D_{R_n}, p\right) \right)  < c  a_n r_n^2 R_n^3    .
\end{equation}
We begin with the first bound in \eqref{e:qc}. Define $G: D_{R_n} \times[0,1]\to\R^3$ by
\begin{equation}
\label{e:g}
G(x,\alpha)=\begin{pmatrix}
\nabla(F-\alpha p)(x)\\
(F-\alpha p)(x) 
\end{pmatrix}
\end{equation}
and let $p_{G(x,\alpha)}$ denote the density of the (non-degenerate) Gaussian vector $G(x,\alpha)$.
		
We now apply the Kac-Rice formula to $G$. Specifically we apply \cite[Theorem~6.2 and Proposition~6.5]{azais2009level} (which require $G$ to by $C^2$ almost surely) to conclude that
		\begin{multline*}
		\mathbb{E} ( N_\mathrm{QC}(F;D_{R_n}, p) )  \\
		\begin{aligned}
		&=\iint\limits_{D_{R_n}\times[0,1]} \mathbb{E}\left(\left\lvert\det \! \begin{pmatrix}
		\nabla^2(F-\alpha p)(x) &\nabla(F-\alpha p)(x)\\
		-\nabla p(x)^t & -p(x)
		\end{pmatrix}
		\right\rvert\;\middle| G(x,\alpha)=0\right) p_{G(x,\alpha)}(0) d\alpha dx\\
		& \leq  \mathrm{Area}(D) R^2_n  \! \!
		\end{aligned}		\\
		 \cdot\sup_{\substack{x \in D_{R_n}\\\alpha \in [0, 1]}} \mathbb{E}\left(\left\lvert\det \! \begin{pmatrix}
		\nabla^2(F-\alpha p)(x) &\nabla(F-\alpha p)(x)\\
		-\nabla p(x)^t & -p(x)
		\end{pmatrix}
		\right\rvert\;\middle| G(x,\alpha)=0\right) p_{G(x,\alpha)}(0) .
		\end{multline*}
		The density $ p_{G(x,\alpha)}(0)$ is bounded above by $c_1/\sqrt{\det\Sigma(x, \alpha)}$, where $\Sigma(x, \alpha)$ is the covariance matrix of $G(x, \alpha)$ and $c_1 > 0$ is an absolute constant. Using the condition $G(x,\alpha)=0$ (which implies $\nabla(F-\alpha p)(x)=0$), we have
		\begin{multline*}
		\mathbb{E}\left(\left\lvert\det \! \begin{pmatrix}
		\nabla^2(F-\alpha p)(x) &\nabla(F-\alpha p)(x)\\
		-\nabla p(x)^t & -p(x)
		\end{pmatrix}
		\right\rvert\;\middle| G(x,\alpha)=0\right) \\
		\begin{aligned}
		&=\lvert p(x)\rvert\cdot\mathbb{E}\left(\left\lvert\det\left(\nabla^2(F-\alpha p)(x)\right)\right\rvert\;\middle| G(x,\alpha)=0\right) \\
		&\le c_2 \lvert p(x)\rvert \max_{|k|= 2} \max \left\{ \mathbb{E}\left( \left\lvert\partial^k F(x)\right\rvert^2 \middle| G(x, \alpha) = 0  \right) ,  \left\lvert\partial^kp(x)\right\rvert^2  \right\}  ,
		\end{aligned}
		\end{multline*}
		where $c_2 > 0$ is an absolute constant and we have expanded the determinant and applied H\"{o}lder's inequality in the final line. By \eqref{e:h},
		\begin{displaymath}
		\|p\|_{C^2(D_{R_n})} = O( a_n (r_n R_n)^2) \to 0\qquad\text{as }n\to\infty.
		\end{displaymath}
		Hence it suffices to show that
		\begin{equation}
		\label{e:qc2}
		\frac{1}{\det(\Sigma(x, \alpha))}\quad \text{and} \quad  \max_{|k| = 2} \mathbb{E}\left( \left\lvert\partial^k F(x)\right\rvert^2 \middle| G(x, \alpha) = 0  \right)  
		\end{equation}
		are bounded above, uniformly over $n$ sufficiently large and $(x, \alpha) \in D_{R_n} \times [0,1]$.
		
		Considering the first term; since $p$ is deterministic and $f$ is stationary,
		\begin{displaymath}
		\Sigma(x,\alpha)=\mathrm{Cov}\begin{pmatrix}
		\nabla f(x)\\f(x)
		\end{pmatrix}
		=\mathrm{Cov}\begin{pmatrix}
		\nabla f(0)\\f(0)
		\end{pmatrix}
		\end{displaymath}
		which has a non-zero determinant, by Assumption~\ref{a:minimal}, that does not depend on $x$ or $\alpha$.
		
		Now turning to the second term of \eqref{e:qc2}; since $f$ is stationary, $(f(x),\nabla^2 f(x))$ is independent of $\nabla f(x)$ (this is a standard result for stationary Gaussian fields, see \cite[Chapter~5]{RFG}) and so for $\lvert k\rvert=2$
		\begin{equation}\label{e:qc5}
		\begin{aligned}
		\mathbb{E}\left( \left\lvert\partial^k F(x)\right\rvert^2 \middle| G(x, \alpha) = 0  \right)&=\mathbb{E}\left( \left\lvert\partial^k f(x)\right\rvert^2 \middle| f(x)=\ell+a_n+\alpha p(x)  \right)\\
		&=\mathbb{E}\left( \left\lvert\partial^k f(0)\right\rvert^2 \middle| f(0)=\ell+a_n+\alpha p(x)  \right)
		\end{aligned}
		\end{equation}
where the second equality also follows from stationarity of $f$ (recall that $p$ is deterministic). By Gaussian regression (\cite[Proposition~1.2]{azais2009level}), this final expression is continuous in the variable $\ell+a_n+\alpha p(x)$. Since $a_n$ and  $\|p\|_{C^2(D_{R_n})}$ converge to zero as $n\to\infty$, we see that this variable has a uniformly bounded range for all $n$ and all $(x,\alpha)\in D_{R_n}\times[0,1]$. Therefore \eqref{e:qc5} is uniformly bounded as required. This establishes the first bound in \eqref{e:qc}.
		
We prove the second bound in \eqref{e:qc} for the number of quasi-critical points in each of the four edges (i.e.\ line segments) that make up $\partial D_{R_n}$ separately. Let $F_\partial$ and $p_\partial$ be the restrictions of $F$ and $p$ respectively to one of these line segments (viewed as one-dimensional fields). We apply the Kac-Rice formula to
		\begin{equation*}
		G_\partial(x,\alpha)=\begin{pmatrix}
		(F^\prime_\partial-\alpha p^\prime_\partial)(x)\\
		(F_\partial-\alpha p_\partial)(x) 
		\end{pmatrix}
		\end{equation*}
		which gives the result once we establish that 
		\[    \mathbb{E} \left( \left\lvert  F_\partial^{\prime\prime}(x)\right\rvert \; \middle| G_\partial(x,\alpha) = 0 \right)  \, , \quad \left\lvert p_\partial^{\prime\prime}(x)\right\rvert \quad \text{and} \quad \frac{1}{\det(\Sigma_\partial(x, \alpha))} \]
		are bounded above uniformly over $n$ sufficiently large and $(x, \alpha) \in [0,c R_n]\times [0,1]$, where $\Sigma_\partial(x,\alpha)$ denotes the covariance matrix of $G_\partial(x, \alpha)$ and we have parameterised the line segment by $[0,cR_n]$ for some $c$ depending only on $D$. These facts are proven using arguments near identical to those given above.
	\end{proof}
	
	The proof of Lemma \ref{l:RPW perturbation} is slightly more complex because the perturbation is random, and we require an additional lemma to control its behaviour:
	
	\begin{lemma}
		\label{l:RPWbound}
		Let $f$ be the Random Plane Wave and $f_N$ the approximation of this expansion given by \eqref{e:exptrunc}. For each $\beta \in (0,1)$ and $k \in \mathbb{N}$ there exists $c_1,c_2 > 0$ such that, for all $N \ge 1$,
		\begin{displaymath}
		\mathbb{E} \left( \|f-f_N\|_{C^{k}(B(\beta N))}^2 \right)\leq c_1e^{-c_2N}.
		\end{displaymath}
	\end{lemma}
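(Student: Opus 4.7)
The plan is to split $f-f_N$ into two pieces corresponding to the two truncations in \eqref{e:exptrunc} and bound each piece using exponential decay. Write
\[ f - f_N = T_1 + T_2, \quad T_1 := \Big(\sum_{k > N} 2^{-k/2} d_k\Big) J_0(|x|), \quad T_2 := \sum_{|m| > N} a_m J_{|m|}(|x|) e^{im\theta(x)}. \]
By the triangle inequality, $\|f-f_N\|_{C^k}^2 \le 2\|T_1\|_{C^k}^2 + 2\|T_2\|_{C^k}^2$, so it suffices to bound the expectation of each squared norm.

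For $T_1$, the integral representation $J_0(|x|) = (2\pi)^{-1}\int_0^{2\pi} e^{ix\cdot(\cos\phi,\sin\phi)}\,d\phi$ shows that all Cartesian partial derivatives of $J_0(|\cdot|)$ of order at most $k$ are uniformly bounded on $\R^2$, and hence $\|J_0(|\cdot|)\|_{C^k(\R^2)} \le C_k$ for a constant depending only on $k$. Since $T_1$ factors as a scalar times $J_0(|\cdot|)$, I obtain $\|T_1\|_{C^k(B(\beta N))}^2 \le C_k^2 \big(\sum_{k > N} 2^{-k/2} d_k\big)^2$, and taking expectations gives $\mathbb{E}\|T_1\|_{C^k}^2 \le C_k^2 \sum_{k > N} 2^{-k} \le 2 C_k^2\, 2^{-N}$.

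For $T_2$, let $u_m(x) := J_{|m|}(|x|) e^{im\theta(x)}$. Using the Bessel recurrences $J_m'(r) = \tfrac12(J_{m-1}(r)-J_{m+1}(r))$ and $(m/r)J_m(r) = \tfrac12(J_{m-1}(r)+J_{m+1}(r))$ together with $\partial_{x_1} = \cos\theta\,\partial_r - (\sin\theta/r)\partial_\theta$ and the analogous expression for $\partial_{x_2}$, a short computation yields
\[ \partial_{x_1} u_m = \tfrac12(u_{m-1}-u_{m+1}), \qquad \partial_{x_2} u_m = \tfrac{i}{2}(u_{m-1}+u_{m+1}). \]
Iterating, for any multi-index $\alpha$ with $|\alpha|=j$, $\partial^\alpha u_m$ is a linear combination, with coefficients of modulus at most $2^{-j}$, of the functions $u_{m'}$ with $|m-m'|\le j$. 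Fix $\alpha_0\in(\beta,1)$. For $|m|>N$ and $r\le\beta N$, we have $r<\beta|m|<\alpha_0(|m|-k)$ once $N$ is large enough, so \eqref{e:bessel} gives $|u_{m'}(x)|=|J_{|m'|}(r)|\le c_1 e^{-c_2|m'|}\le c_1 e^{c_2 k}e^{-c_2|m|}$ for all $|m'|\ge |m|-k$. Therefore $|\partial^\alpha u_m(x)|\le C_k e^{-c|m|}$ uniformly on $B(\beta N)$ for $|m|>N$.

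Combining, for each $|\alpha|\le k$,
\[ \sup_{x\in B(\beta N)}|\partial^\alpha T_2(x)| \le \sum_{|m|>N}|a_m|\,C_k e^{-c|m|}, \]
and applying the Cauchy--Schwarz inequality $(\sum x_m y_m)^2 \le (\sum x_m^2)(\sum y_m^2)$ with the split $e^{-c|m|}=e^{-c|m|/2}\cdot e^{-c|m|/2}$, together with $\mathbb{E}|a_m|^2=1$, yields
\[ \mathbb{E}\Big(\sup_{B(\beta N)}|\partial^\alpha T_2|\Big)^2 \le \Big(\sum_{m\in\Z}e^{-c|m|}\Big)\sum_{|m|>N}\mathbb{E}|a_m|^2\, C_k^2 e^{-c|m|} \le C_k' e^{-cN}. \]
Summing over the finitely many multi-indices $|\alpha|\le k$ and combining with the bound on $T_1$ gives the result. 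The main technical point is the derivative recurrence for $u_m$, which transfers the exponential Bessel bound \eqref{e:bessel} from $J_{|m|}$ itself to all Cartesian derivatives of $u_m$ of bounded order without losing the exponential decay.
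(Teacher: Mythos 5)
Your proof is correct and follows essentially the same route as the paper's: split $f-f_N$ into the two tails, use the Bessel recurrences to express Cartesian derivatives as linear combinations of neighbouring terms $J_{|m'|}(r)e^{im'\theta}$, and then apply the exponential bound \eqref{e:bessel} together with a Cauchy--Schwarz argument for the expectation. Your formulation of the clean iteration $\partial_{x_1}u_m=\tfrac12(u_{m-1}-u_{m+1})$ is just a systematic packaging of the paper's ``differentiate in polar coordinates and use Bessel identities'' step (up to harmless signs from $J_{-n}=(-1)^nJ_n$), so there is nothing substantively different here.
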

	\begin{proof}%[Proof of Lemma \ref{l:RPWbound}]
		Recall from \eqref{e:bessel} that there exist $c_1,c_2>0$ such that, for all $m \ge 0$ and $r\leq \alpha m$,
		\begin{equation}\label{e:Bessel bound}
		\lvert J_{m}(r)\rvert\leq c_1 e^{-c_2 m}.
		\end{equation}
		Applying this to the orthogonal expansion \eqref{e:exp}, along with the fact that $\lvert J_0\rvert\leq 1$ we have
		\begin{align*}
		\mathbb{E} \left(  \sup_{x\in B(\beta N)}\left\lvert f(x)-f_N(x)\right\rvert^2 \right)&\leq \mathbb{E}\left(\left(\sum_{j>N}2^{-j/2}\lvert d_j\rvert+\sum_{\lvert m\rvert>N}c_1\lvert a_m\rvert e^{-c_2\lvert m\rvert}\right)^2 \right)\\
		&\leq c_3e^{-c_4N}
		\end{align*}
		for some $c_3,c_4>0$, which gives the result in the case $k = 0$. 
		
		For the general case $k \ge 1$, we differentiate in polar coordinates and use Bessel identities to replace the resulting terms by linear combinations of Bessel functions. For instance, since  
		\begin{displaymath}
		\frac{2mJ_m(r)}{r}=J_{m-1}(r)+J_{m+1}(r)\quad\text{and}\quad 2J_m^\prime(r)=J_{m-1}(r)-J_{m+1}(r)
		\end{displaymath}
		we have that
		\begin{multline*}
		\begin{aligned}
		\partial_{x_1}f(x)-\partial_{x_1}f_N(x)=&\sum_{j>N}-2^{-j/2}d_jJ_1(r)\cos(\theta)\\
		&+\sum_{\lvert m\rvert>N}a_me^{im\theta}(J_{\lvert m\rvert}^\prime(r)\cos(\theta)-i\sin(\theta)(m/r)J_{\lvert m\rvert}(r))\\
		=&\sum_{j>N}-2^{-j/2}d_jJ_1(r)\cos(\theta)\\
		&+\sum_{\lvert m\rvert>N}\frac{1}{2}a_me^{im\theta}((J_{\lvert m\rvert-1}(r)-J_{\lvert m\rvert+1}(r))\cos(\theta)
		\end{aligned}\\
		-i\sin(\theta)\mathrm{sgn}(m)(J_{\lvert m\rvert-1}(r)+J_{\lvert m\rvert+1}(r))).
		\end{multline*}
		Hence by the triangle inequality and \eqref{e:Bessel bound}, we have 
		\begin{displaymath}
		\mathbb{E} \left(\sup_{x\in B(\beta N)}\lvert \partial_{x_1} f(x)-\partial_{x_1} f_N(x)\rvert^2 \right)\leq c_5 e^{-c_6N}.
		\end{displaymath}
		The proof for $k > 1$ is similar, and we omit the details.
	\end{proof}

	\begin{proof}[Proof of Lemma \ref{l:RPW perturbation}] \textsc{(Assuming Proposition~\ref{p:Gaussian perturbation}.)}
We recall the orthogonal expansion of the Random Plane Wave in \eqref{e:exp} and its finite approximation $f_N$ in \eqref{e:exptrunc}. We define $F:=f_N - \ell$ and $p:=f_N - f$, which are independent $C^\infty$ Gaussian fields. We wish to apply Proposition~\ref{p:Gaussian perturbation} with $\mathcal{D}=D_R$. We first observe that for any $x\in\mathcal{D}_+$, one can easily check that $(p(x),\nabla p(x))$ is non-degenerate using the orthogonal expansion in \eqref{e:exp} (note that by isotropy, it is enough to check non-degeneracy when $x=(x_1,0)$, which easily follows from independence of the variables $b_n$ and $c_n$). This means that the fourth condition of Proposition~\ref{p:Gaussian perturbation} holds.

The first three conditions of Proposition~\ref{p:Gaussian perturbation} hold if we replace $F$ with $f-\ell$ (the first two conditions follow from Assumption~\ref{a:minimal} and by stationarity the third condition is verified by computing the partial derivatives of order at most four of $\kappa$ at the origin). By Lemma~\ref{l:RPWbound}, the covariance function of $F=f_N-\ell$ and its first two derivatives converges uniformly on $B(\beta N)$ to that of $f-\ell$. This implies that once $N$ is sufficiently large, $F$ satisfies the same non-degeneracy conditions as $f-\ell$ on all $D_R$ such that $R\cdot\mathrm{Diam}(D)\leq\beta N$.

We have therefore verified all the conditions of Proposition~\ref{p:Gaussian perturbation} and conclude that with probability one,
\begin{displaymath}
\left\lvert N_\mathrm{ES}\left(F;D_R,0\right)-N_\mathrm{ES}\left(F-p;D_R,0\right)\right\rvert\leq N_\mathrm{QC}\left(F;p,\overline{D_R}\right).
\end{displaymath}

To complete the proof of the lemma, it suffices to show that
\begin{equation}
\label{e:qc3}
\mathbb{E} ( N_\mathrm{QC}(F;p,D_R) )  < c_1  \quad \text{and}  \quad  \mathbb{E} ( N_\mathrm{QC}(F;p,\partial D_R) ) < c_1 
\end{equation}
uniformly over $N$ sufficiently large and $R\cdot\mathrm{Diam}(D)\leq \beta N$. Arguing as in the proof of Lemma \ref{l:Spectral density perturbation} (with one additional application of H\"older's inequality because $p$ is now random), the first bound in \eqref{e:qc3} follows from the Kac-Rice formula once we check the following two conditions. First, for all $N$ sufficiently large
\begin{equation}\label{e:qc6}
\sup_{x\in B(\beta N)}\mathbb{E}\left(\lvert p(x)\rvert^2\middle|G(x,\alpha)=0\right)^{1/2}\leq \frac{1}{N^2},
\end{equation}
where $G(x, \alpha)$ is defined as in \eqref{e:g} with $p_{G(x, \alpha)}$ its density. Second, we require that for all multi-indices $k$ with $\lvert k\rvert=2$
\begin{equation}
\label{e:qc4}
\begin{aligned}
\mathbb{E}& \left(\left\lvert \partial^k F(x) \right\rvert^4 \middle| G(x,\alpha) = 0 \right) p_{G(x, \alpha)}(0)  
\\
& \qquad\qquad\qquad\text{and}  
\\
\mathbb{E} &\left(\left\lvert\partial^k p(x)\right\rvert^4 \middle| G(x,\alpha) = 0 \right) p_{G(x, \alpha)}(0)   
\end{aligned}
\end{equation}
are bounded above uniformly over $(x, \alpha) \in B(\beta N) \times [0,1]$ for sufficiently large $N$.

Since $(p,G)$ is jointly Gaussian, the distribution of $p$ conditional on $G$ is also Gaussian. Therefore in order to prove \eqref{e:qc6}, it is enough to show that
\begin{align*}
\sup_{x\in B(\beta N)}\max\left\{\mathrm{Var}\left(p(x)\middle|G(x,\alpha)=0\right),\mathbb{E}\left(p(x)\middle|G(x,\alpha)=0\right)^2\right\}\leq \frac{1}{N^4}.
\end{align*}
Conditioning on some elements of a Gaussian vector can only reduce the variance of other elements, therefore
\begin{displaymath}
\mathrm{Var}\left(p(x)\middle|G(x,\alpha)=0\right)\leq\mathrm{Var}(p(x))
\end{displaymath}
and the supremum of this quantity over $x\in B(\beta N)$ decays exponentially in $N$ by Lemma~\ref{l:RPWbound}. Turning to the expectation; by Gaussian regression
\begin{displaymath}
\mathbb{E}(p(x)|G(x,\alpha)=0)=
\mathrm{Cov}\left(p(x),\begin{pmatrix}
\nabla(f_N-\alpha p)(x)\\(f_N-\alpha p)(x)
\end{pmatrix}\right)\mathrm{Var}\begin{pmatrix}
\nabla(f_N-\alpha p)(x)\\(f_N-\alpha p)(x)
\end{pmatrix}^{-1}\begin{pmatrix}
0\\0\\\ell
\end{pmatrix}.
\end{displaymath}
By Lemma~\ref{l:RPWbound}, as $N\to\infty$ the covariance of $(\nabla(f_N-\alpha p)(x),(f_N-\alpha p)(x))$ converges uniformly to that of $(\nabla f(x),f(x))$ (which is constant, since $f$ is stationary). Therefore by the Cauchy-Schwarz inequality, the above expression is bounded above, for all $N$ sufficiently large, by $c\mathbb{E}(p(x)^2)^{1/2}$ for some constant $c$ depending only on $\ell$. Applying Lemma~\ref{l:RPWbound} then completes the proof of \eqref{e:qc6}.

A near identical argument shows that the conditional expectations in each term of \eqref{e:qc4} are uniformly bounded for $N$ sufficiently large. It therefore remains to show that $p_{G(x,\alpha)}(0)$ is uniformly bounded. Since this is a Gaussian density, we need only show that the determinant of $\mathrm{Var}(G(x,\alpha))$ is uniformly bounded away from zero. As $N\to\infty$, this covariance matrix converges uniformly to $\mathrm{Var}(\nabla f(x), f(x))$ which has a non-zero determinant that does not depend on $x$. This implies the necessary bound and so completes the proof of the first bound in \eqref{e:qc3}.
		
For the second bound in \eqref{e:qc3}, we count the quasi-critical points on each component of $\partial D_R$ separately and bound their expectation using the same argument as above applied to the boundary.
\end{proof}

\begin{appendix}
\section{Morse theory arguments}\label{a:Appendix}

In this appendix we prove Theorem~\ref{t:Morse}. Throughout this appendix we always assume that $F$, $p$ and $\mathcal{D}$ satisfy Assumption \ref{a:Morse}. The results that we need are very similar to classical results from Morse theory. The main difference is that we work with functions on compact domains with boundary and instead of considering one function at different levels we consider a smooth family of functions at a single level. The first modification is addressed by stratified Morse theory. The second is less studied, but very closely related result have appeared before. The statement that we need is not genuinely new, but we were unable to find a reference which is applicable in exactly our case. The most relevant results are in \cite{handron2002generalized,vakhrameev2000}. 

We start with a very natural statement: as we vary $\alpha$, the set where $g_\alpha:=F-\alpha p\ge 0$ changes continuously (in particular, its topology does not change) unless $\alpha$ passes through a value such that  there is a quasi-critical point at this level. Our proof of this statement is very similar to the standard flow arguments used in Morse theory.

Let us define $A_\alpha=\{x\in \overline{\mathcal{D}}: g_\alpha(x)\ge 0\}$. Our first claim is the following:
\begin{proposition}
\label{prop:Morse}
Let $A_\alpha$ be as above and suppose that $(F,p)$ has no quasi-critical points at level $\alpha$ for $\alpha\in[\alpha_1,\alpha_2]$, then  $A_{\alpha_1}$ and $A_{\alpha_2}$ have the same number of connected components which do not intersect $\partial\mathcal{D}$.
\end{proposition}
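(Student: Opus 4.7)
The strategy is to construct a stratum-preserving isotopy $\Phi_\alpha \colon \overline{\mathcal D} \to \overline{\mathcal D}$, $\alpha \in [\alpha_1,\alpha_2]$, with $\Phi_{\alpha_1} = \mathrm{id}$ and $\Phi_\alpha(A_{\alpha_1}) = A_\alpha$. Because such a $\Phi_\alpha$ is a homeomorphism satisfying $\Phi_\alpha(\mathcal D) = \mathcal D$ and $\Phi_\alpha(\partial \mathcal D) = \partial \mathcal D$, the components of $A_\alpha$ contained in $\mathcal D$ are in bijection with those of $A_{\alpha_1}$ contained in $\mathcal D$, which is the desired statement.

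The isotopy will arise as the flow of a time-dependent vector field adapted to the stratification. The heuristic is classical: if $\alpha \mapsto x(\alpha)$ is a curve tracking the zero set, i.e.\ $g_\alpha(x(\alpha)) = 0$, then differentiating in $\alpha$ and using $\partial_\alpha g_\alpha = -p$ yields $\dot x \cdot \nabla g_\alpha(x) = p(x)$. Accordingly, on a neighbourhood of $\{g_\alpha = 0\} \cap \mathcal D$ I would set
\[
V(x,\alpha) = \frac{p(x)}{|\nabla g_\alpha(x)|^2}\,\nabla g_\alpha(x),
\]
which is well defined because $(F,p)$ has no interior quasi-critical points in $[\alpha_1,\alpha_2]$. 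On a relative neighbourhood in $\mathcal E$ of $\{g_\alpha = 0\} \cap \mathcal E$ I would use the tangential analogue
\[
V_\partial(x,\alpha) = \frac{p(x)}{\nabla_\partial g_\alpha(x)}\, v_\partial(x),
\]
well defined by the edge quasi-critical hypothesis; this field is tangent to $\mathcal E$ and so generates a flow staying in $\mathcal E$. At a corner $c \in \mathcal C$ I would set $V \equiv 0$: the no-corner-quasi-critical hypothesis gives $g_\alpha(c) \neq 0$ throughout $[\alpha_1,\alpha_2]$, so $c$ remains on a single side of $\{g_\alpha = 0\}$. These local fields are then spliced by a partition of unity subordinate to the stratification, multiplied by a smooth cutoff localised in a small neighbourhood of $\{g_\alpha = 0\}$ (with $V \equiv 0$ outside), so that points where $g_\alpha$ is bounded away from zero stay fixed. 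Integrating from $\alpha_1$ to $\alpha_2$ yields the required $\Phi_\alpha$; by construction $g_\alpha \circ \Phi_\alpha \geq 0$ iff $g_{\alpha_1} \geq 0$, so $\Phi_\alpha(A_{\alpha_1}) = A_\alpha$.

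\textbf{Main obstacle.} The delicate step is gluing the interior and edge vector fields compatibly along $\mathcal E$. The interior field $V$ generically has a nonzero normal component at $\partial \mathcal D$, so it cannot simply be restricted to $\mathcal E$; the partition of unity must instead interpolate between $V$ and $V_\partial$ as one approaches $\mathcal E$, while preserving the property that the flow carries $\{g_\alpha = 0\}$ onto itself at each time. I would handle this by working in a collar neighbourhood of $\partial \mathcal D$, decomposing $V$ into tangential and normal components, and replacing the tangential component by a smooth interpolation agreeing with $V_\partial$ on $\mathcal E$; crucially, the normal component of $V$ near $\mathcal E$ is controlled precisely because $\nabla_\partial g_\alpha \ne 0$ on $\{g_\alpha = 0\} \cap \mathcal E$, so $\nabla g_\alpha$ is transverse to the boundary. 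Once assembled, standard ODE theory on compact manifolds with corners gives the flow. A cleaner alternative would be to invoke stratified Morse theory directly, in the spirit of \cite{handron2002generalized, vakhrameev2000}, to extract the isotopy from the absence of quasi-critical points on $[\alpha_1,\alpha_2]$.
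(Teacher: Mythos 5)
Your core mechanism is the same as the paper's: the level-preserving flow \eqref{e:flow} (your $V$ is exactly $-\partial_\alpha g_\alpha\,\nabla g_\alpha/|\nabla g_\alpha|^2$, since $\partial_\alpha g_\alpha=-p$), localised near the moving nodal set. Where you diverge is the treatment of $\partial\mathcal{D}$. The paper never constructs a vector field tangent to the boundary: it shrinks to $\mathcal{D}_{3\epsilon}$, flows only the curve $\partial A_{\alpha_0,3\epsilon}$, shows the collar pieces $A_\alpha\setminus A_{\alpha,3\epsilon}$ are retractible quadrilaterals (so the shrunk sets are deformation retracts of the full excursion sets), and works on short parameter intervals $[\alpha_0-\delta,\alpha_0+\delta]$, using compactness of $[\alpha_1,\alpha_2]$ to conclude. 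Your plan — a global stratum-preserving isotopy of $\overline{\mathcal{D}}$ in the spirit of Thom's first isotopy lemma — yields a cleaner conclusion (an honest bijection of components preserving the property of meeting $\partial\mathcal{D}$), at the price of having to build a controlled vector field on a manifold with corners, which is precisely the step the paper engineers around.

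That construction is where your sketch has a genuine gap. As written, ``replacing the tangential component by a smooth interpolation agreeing with $V_\partial$ on $\mathcal{E}$'' does not resolve the obstacle: if the normal component of $V$ is kept, the flow exits $\overline{\mathcal{D}}$; if it is simply damped to zero near $\mathcal{E}$, then $W\cdot\nabla g_\alpha\neq p$ on the nodal set in the collar and the flow no longer carries $\{g_{\alpha_1}=0\}$ onto $\{g_\alpha=0\}$. The correct move is to \emph{prescribe} the normal component $b$ (anything vanishing on $\mathcal{E}$, e.g.\ the normal part of $V$ times the distance to the boundary) and \emph{solve} for the tangential component, $a=(p-b\,\nabla_{\bar\partial}g_\alpha)/\nabla_\partial g_\alpha$, which is legitimate near $\{g_\alpha=0\}\cap\mathcal{E}$ precisely because there are no boundary quasi-critical points; on $\mathcal{E}$ this reduces to your $V_\partial$. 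Two further points need attention. First, $\nabla_\partial g_\alpha\neq0$ is guaranteed only near the boundary crossings of the nodal set and only for $\alpha$ in a short range, and a nodal component may sit entirely inside the collar without touching $\mathcal{E}$; so you must subdivide $[\alpha_1,\alpha_2]$ and choose the collar width by compactness, exactly as the paper does with its $\epsilon$ and $\delta$. Second, the claim ``$g_\alpha\circ\Phi_\alpha\ge0$ iff $g_{\alpha_1}\ge0$'' is not automatic once the cutoff is introduced, because in the transition region $0<\chi<1$ one has $\tfrac{d}{d\alpha}g_\alpha(x(\alpha))=(\chi-1)p\neq0$; you need the open--closed argument that a trajectory starting off the nodal set can never reach it, since levels are preserved exactly on the ($\alpha$-dependent) region where $\chi=1$, which contains the nodal set. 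With these repairs your argument is sound and is a legitimate alternative to the paper's retraction argument.
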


\begin{proof}
We will prove that the number of connected components of $A_\alpha$ which do not intersect $\partial\mathcal{D}$ is continuous in $\alpha$, and hence constant. Let us define the flow
\begin{equation}
\label{e:flow}
\frac{d x_\alpha}{d \alpha}=-\partial_\alpha g_\alpha(x_\alpha) \frac{\nabla g_\alpha(x_\alpha)}{|\nabla g_\alpha(x_\alpha)|^2}.
\end{equation}
This defines a smooth flow away from the critical points of $g_\alpha$. An application of the chain rule shows that $g_\alpha(x_\alpha)$ is constant in $\alpha$.

By $\mathcal{D}_\epsilon$ we denote the set of points of $\mathcal{D}$ that are a distance at least $\epsilon$ away from $\partial\mathcal{D}$ and we let $A_{\alpha,\epsilon}=A_\alpha\cap \mathcal{D}_\epsilon$. By `quasi-critical point of $g_\alpha$' we just mean a quasi-critical point of $(F,p)$ at level $\alpha$ (recall that quasi-critical points are defined prior to Assumption~\ref{a:Morse}). We emphasise that the quasi-critical points of $g_\alpha$ are not the same as the critical points of $g_\alpha$; with the latter being defined in the usual way (as points $y$ satisfying $\nabla g_\alpha(y)=0$).

Since $g_\alpha$ has no quasi-critical points we see that all critical points of $g_\alpha$ are separated from its nodal set (the set where $g_\alpha=0$). Furthermore these critical points and the nodal set depend continuously on $\alpha$ (as we describe and justify below). We therefore claim that for every $\alpha_0\in(\alpha_1,\alpha_2)$, we can find $\epsilon,\delta>0$ such that the following hold for all $\alpha \in [\alpha_0-\delta,\alpha_0+\delta]$:
\begin{enumerate}
    \item All components of $\{g_\alpha=0\}\cap \overline{\mathcal{D}}$ are either contained in $\mathcal{D}_{6\epsilon}$, or intersect  $\partial \mathcal{D}_{\epsilon'}$ transversally for all $0\le \epsilon'\le 6\epsilon$ (if the component intersects a corner, this means that the line is transveral to both boundary intervals). All components of $\{g_\alpha=0\}\cap (\overline{\mathcal{D}}\setminus \mathcal{D}_{6\epsilon})$ have exactly one point on $\partial \mathcal{D}$. 
    \item All critical points of $g_\alpha$ are at least distance $2\epsilon$ away from $\{g_\alpha=0\}\cap \mathcal{D}$ (this also includes critical points outside of $\mathcal{D}$). There are no critical points of $g_\alpha$ in  $\mathcal{D}_\epsilon\setminus \mathcal{D}_{6\epsilon}$.  %If $g_{\alpha_0}$ has a critical point on $\partial\mathcal{D}$, There are no critical points of $g_\alpha$ in a $2\epsilon$ neighbourhood of $\mathcal{D}$.\footnote{M: Should this be `critical points at level zero'? Or does `neighbourhood of $\mathcal{D}$' mean points \emph{outside} $\mathcal{D}$ within distance $2\epsilon$? Because if we take an interval of values $\alpha$ we will generally have critical points of $g_\alpha$ (for some $\alpha$) on $\partial\mathcal{D}$, and so these will move to critical points arbitrarily close to $\partial\mathcal{D}$ as we vary $\alpha$.}
    \item If we start flow \eqref{e:flow} from a point $x_{\alpha_0}$ which is in an  $\epsilon$-neighbourhood of the boundary of $A_{\alpha_0,3\epsilon}$ then $|x_{\alpha_0}-x_\alpha|<\epsilon$.
\end{enumerate}
Proof of 1. By compactness, there is a small neighbourhood of the boundary such that all components of the restriction of of the  nodal set $\{g_{\alpha_0}=0\}$ to this neighbourhood also intersect $\partial\mathcal{D}$. %\footnote{\new{M: Should we replace `inside this neighbourhood are connected to' with `which intersect this neighbourhood also intersect'? }DB: I change the wording from `connected' to intersect, but what you suggest (if I understand correctly) has a slightly different meaning. I want to exclude the situation where a nodal line stars on the boundary, leaves the neighbourhood, then re-enters and leaves again without reaching the boundary. Then we have a bit of a nodal line inside the neighbourhood which is connected to the boundary, but possibly far away from this bit. I really want that in this neighbourhood there are no `unforced' parts of the nodal line. \new{M: I see, I was concerned that saying that the component is `inside this neighbourhood' sounds like we are only considering lines which are completely contained in the neighbourhood. Perhaps we should say `components of the restriction of the nodal set $\{g_{\alpha_0}=0\}$ to this neighbourhood'?}}
Since there are no quasi-critical points of $g_{\alpha_0}$ on the boundary, all  nodal lines (i.e.\ curves in the nodal set $\{g_{\alpha_0}=0\}$) that intersect $\partial\mathcal{D}$ do so transversally. By transversality, the gradient of $g_{\alpha_0}$ at the point where the nodal line intersects the boundary is not orthogonal to the boundary. By continuity, the same is true in some neighbourhood. This means that the nodal line intersects $\partial\mathcal{D}_{\epsilon'}$ transversally for all sufficiently small $\epsilon'$. Since `sufficiently small' here depends on the second derivatives of $g_{\alpha_0}$, by uniform continuity, we can choose $\epsilon$ such that the above claim holds simultaneously for all $\alpha$ sufficiently close to $\alpha_0$ and all $\epsilon'\le 6\epsilon$.

Proof of 2. It is a standard result that non-degenerate critical points of a smooth family of functions depend continuously on the parameter. Hence, for all $\alpha$ sufficiently close to $\alpha_0$, all critical points of $g_\alpha$ are $\epsilon$-close to the critical points of $g_{\alpha_0}$. Since   there are no critical points of $g_{\alpha_0}$ on the nodal lines, there are no critical points in some neighbourhood of $\{g_\alpha=0\}\cap \mathcal{D}$.  If $g_{\alpha_0}$ has no critical points on $\partial\mathcal{D}$, then by the same argument $g_\alpha$ has no critical points in some neighbourhood of $\partial\mathcal{D}$  for all $\alpha$ sufficiently close to $\alpha_0$. Finally, if $g_{\alpha_0}$ has any critical points on the boundary, then by the same argument as above, for all $\alpha$ sufficiently close to $\alpha_0$ all critical points of $g_\alpha$ are either in a very small neighbourhood of the boundary or outside of a larger neighbourhood of the boundary. 

Proof of 3. The last part follows from the second one. By compactness, $\partial_\alpha g_\alpha$ is uniformly bounded. Since $x_{\alpha_0}$ is away from critical points, we have a uniform lower bound on $|\nabla g_\alpha|$, hence the speed of the flow is uniformly bounded. This means that if $\delta$ is sufficiently small, then the flow can not move by more than $\epsilon$. Note that the flow is not limited to $\mathcal{D}$, it is defined globally outside of critical points. We allow the flow to start in $\mathcal{D}$ and leave it.

From now on we assume that $\alpha_0$, $\epsilon$ and $\delta$ are chosen so that these three statements hold. Since $g_\alpha$ has no critical points at level zero, the sets $\{g_\alpha(x)=0\}$ are made up of $C^2$-smooth curves that either intersect the boundary of $\mathcal{D}$ transversely or do not intersect it at all. This means that $A_\alpha\setminus A_{\alpha,3\epsilon}$ is a disjoint union of quadrilaterals (that is a simply connected domain bounded by a simple piecewise smooth curve with four marked points, these points are `vertices' and the arcs between them are `edges' of the quadrilateral).  For each component of $A_\alpha\cap \partial \mathcal{D}$ there is a quadrilateral such that its four `sides' are: this boundary component, two sub-arcs of $\{g_\alpha=0\}$ and a part of $\partial\mathcal{D}_{3\epsilon}$ (see Figure \ref{fig:flow}). Since each quadrilateral can be retracted to its side, $A_{\alpha,\epsilon}$ is a deformation retract of $A_\alpha$.

\begin{figure}[h]
    \centering
    \includegraphics[width=0.7\textwidth]{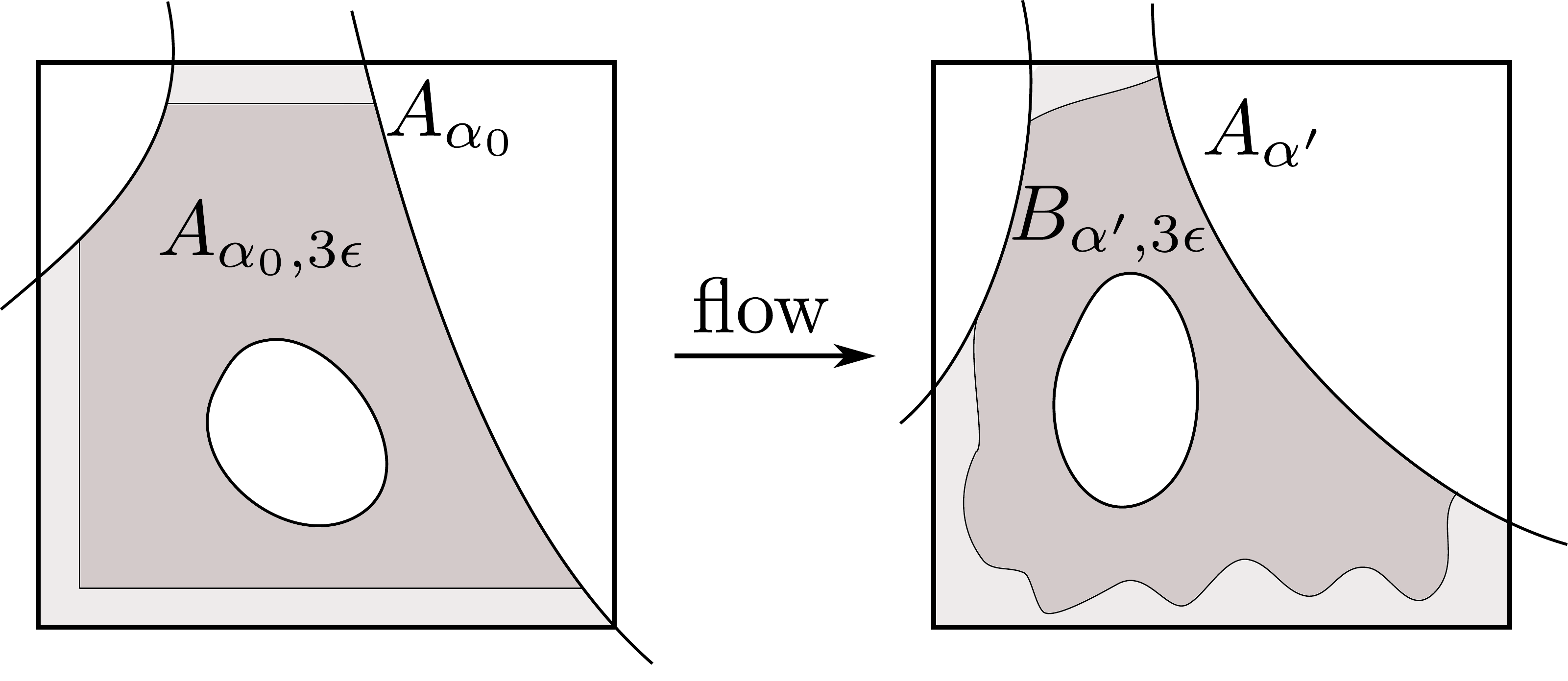}
    \caption{The gradient flow \eqref{e:flow} continuously deforms $\partial A_{\alpha_0,3\epsilon}$ to $\partial B_{\alpha',3\epsilon} $. Clearly $  B_{\alpha',3\epsilon}\subset A_{\alpha'}$. Note that the flow might be not defined \emph{inside} $A_{\alpha',3\epsilon}$, we only move the boundary (although   the flow is really defined and controlled everywhere except small neighbourhoods of critical points, so we can move not only boundary, but also most of the domain by this flow, it is easy to extend the flow in a continuous way to these small neighbourhoods). Light-grey parts are quadrilaterals that can be contracted to the corresponding arcs on the boundary of $A_{\alpha_0,3\epsilon}$ and $B_{\alpha',3\epsilon}$ respectively.  }
    \label{fig:flow}
\end{figure}

Let us consider $\alpha' \in [\alpha_0-\delta,\alpha_0+\delta]$. As $\alpha_0$ changes to $\alpha'$, the flow continuously  moves the boundary of $A_{\alpha_0,3\epsilon}$ by at most $\epsilon$. This gives a homotopy of $\partial A_{\alpha_0,3\epsilon}$ and its image. Since the flow is planar, the image  of $\partial A_{\alpha_0,3\epsilon}$ is the boundary of a domain which is homotopic to $A_{\alpha_0,3\epsilon}$. We denote this domain by $B_{\alpha',3\epsilon}\subset \mathcal{D}$. Since the flow is continuous and preserves $g_\alpha(x_\alpha)$ the following holds: the parts of $\partial A_{\alpha_0,3\epsilon}$ which are made of  nodal lines of $g_{\alpha_0}$ move to nodal lines of $g_{\alpha'}$ and the straight parts of $\partial A_{\alpha_0,3\epsilon}$, where $g_{\alpha_0}\ge 0$, become smooth curves inside $A_{\alpha'}$. Since the flow does not move by more than $\epsilon$, these curves are actually inside $\mathcal{D}_{2\epsilon}$. Moreover, since the flow is invertible,  $A_{\alpha',4\epsilon}\subset B_{\alpha',3\epsilon}\subset A_{\alpha',2\epsilon}$ and  $B_{\alpha',3\epsilon}\cap \mathcal{D}_{4\epsilon}$  is equal to $A_{\alpha',4\epsilon}$. In the same way as before, $A_{\alpha'}\setminus B_{\alpha',3\epsilon}$ is made of quadrilaterals that can be retracted. This proves that $B_{\alpha',3\epsilon}$ is a  deformation retract of $A_{\alpha'}$, so they have the same topology. This proves that $A_{\alpha_0}$ and $A_{\alpha'}$ have the same topology.

Finally we note that in the above argument, the components of $A_{\alpha_0}$ which do not intersect $\partial\mathcal{D}$ are mapped by the flow to components of $A_{\alpha^{\prime}}$ which do not intersect $\partial\mathcal{D}$. Therefore the number of such components is non-decreasing as we move from $\alpha_0$ to $\alpha^{\prime}$. By considering the inverse of flow \eqref{e:flow}, we see that the same is true when moving from $\alpha^{\prime}$ to $\alpha_0$.
%\footnote{\new{Is this correct? Does it matter that we cannot necessarily use all the properties above for the inverse flow, since $\epsilon$ depends on $\alpha_0$?} DB: I think the way I set things up the estimates are uniform for all $\alpha$ in $[\alpha_0-\delta,\alpha_0+\delta]$. Actually, this is exactly why I initially wanted to formulate in terms of the flow between two arbitrary $\alpha'$ and $\alpha''$ in this interval. This way, the uniform control is a bit more evident. M: OK I understand now, and yes I'm happy with this!} 
This argument proves that the number of connected components which intersect $\partial\mathcal{D}$ is continuous, hence constant.
\end{proof}

Next we would like to analyse what happens when $\alpha$ passes through a critical level (i.e.\ the level of a quasi-critical point). Again, this is very similar to standard Morse theory arguments which give a full CW complex decomposition of excursion sets. Since we are working in the two-dimensional case and only care about the number of connected components, many arguments can be significantly simplified.

We will control the effects of quasi-critical points locally by using a version of the Morse lemma for manifolds with corners. First we require some definitions from \cite{handron2002generalized}. Recall that for $z\in K$ a submanifold with corners, $j(z)$ is the number such that a neighbourhood of $z$ can be mapped by a chart to a neighbourhood of zero in $[0,\infty)^{j(z)}\times\R^{n-j(z)}$. For each $j$, we define the set of $z\in K$ with $j(z)=j$ to be a stratum of $K$. So in the case that $K=\overline{\mathcal{D}}$, the three strata are the interior $\mathcal{D}$, the edges $\mathcal{E}$ and the corners $\mathcal{C}$ (agreeing with our earlier definition of strata for rectangles).
\begin{definition}
Let $M$ be a smooth $n$-dimensional manifold, $K$ be a submanifold with corners and $g:K\to\R$ be a $C^2$ function. We say that $g$ is a Morse function if it satisfies the following:
\begin{enumerate}
    \item If $H$ is a stratum of $K$ then any critical points of $g|_H$ are non-degenerate
    \item If $H_1,H_2$ are (distinct) strata of $K$ with $z\in H_1\subset\overline{H_2}$ then $z$ is not a critical point of $g|_{\overline{H_2}}$.
%    \item For any $z\in H\subset\partial K$ (where $H$ is a stratum) and any coordinate chart $(U,(x_1,\dots,x_n))$ with $z\in U$, if $\nabla g$ is tangent to $H$ at $z$ with $x_i$ normal to $H$ (so $\partial g/\partial x_i=0$) then the directional derivative of $\frac{\partial g}{\partial x_i}$ in the direction of $\nabla g(z)$ is non-zero.
\end{enumerate}
\end{definition}
\begin{lemma}[Morse lemma, {\cite[Lemma~5]{handron2002generalized}}]\label{l:Morse lemma}
Let $M$ be a smooth $n$-dimensional manifold, $K$ be a submanifold with corners and $g:K\to\R$ be a Morse function. If $z\in K$ is a critical point of $g$ restricted to the stratum of $K$ containing $z$ then there exists $\lambda\in\{0,1,\dots,n-j(z)\}$ and a chart $(U,(x_1,\dots,x_n))$ with $z\in U$ such that 
\[
\begin{aligned}
g=&g(z)-x_1^2-\dots-x_\lambda^2+x_{\lambda+1}^2+\dots+x_{n-j(z)}^2\\
&+(-1)^{\sigma(n-j(z)+1)}x_{n-j(z)+1}+\dots+(-1)^{\sigma(n)}x_n
\end{aligned}
\]
holds in $U\cap K$ for some $\sigma(n-j(z)+1),\dots,\sigma(n)\in \mathbb{N}$. We define this number $\lambda$ to be the index of the critical point $z$.
\end{lemma}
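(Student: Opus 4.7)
My plan is to prove the Morse lemma in three stages: first straighten the corner by a local chart, then apply a parametric Morse lemma in the directions tangent to the stratum containing $z$, and finally linearize the remaining function of the boundary coordinates using the non-degeneracy condition from part~(2) of the Morse function definition.

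First, pick a chart $(U, \varphi)$ around $z$ so that $\varphi \colon U \cap K \to [0, \infty)^{j(z)} \times \R^{n-j(z)}$ with $\varphi(z) = 0$, and write coordinates as $(y, x)$ with $y \in [0, \infty)^{j(z)}$ and $x \in \R^{n-j(z)}$. The stratum $H$ containing $z$ corresponds to $\{y = 0\}$, and $g|_H$ in these coordinates is the map $x \mapsto g(0, x)$, which has a non-degenerate critical point at the origin by hypothesis~(1). Second, apply a parametric Morse lemma. Since the Hessian $\partial_x^2 g(0, 0)$ is non-singular, the implicit function theorem applied to $\partial_x g(y, x) = 0$ yields a smooth map $w(y)$ with $w(0) = 0$ giving the unique nearby $x$-critical point of $g(y, \cdot)$. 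A standard smooth-in-parameters completion of squares then produces a change of the $x$-coordinates $x \mapsto \tilde x$, depending smoothly on $y$ and with $\tilde x = 0$ at $x = w(y)$, reducing $g$ to
\[
g(y, x) = h(y) + \sum_{k=1}^{n-j(z)} \varepsilon_k \tilde x_k^2, \qquad h(y) := g(y, w(y)),
\]
where $\varepsilon_k \in \{-1, +1\}$ has signature $(\lambda, n-j(z)-\lambda)$ matching that of the Hessian. Since this change acts trivially on $y$, it preserves the half-space structure $\{y_i \ge 0\}$.

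Third, linearize $h$ on $[0, \infty)^{j(z)}$. For each $i \le j(z)$, let $H_2^{(i)}$ be the codimension-$(j(z)-1)$ stratum in which only $y_i$ is free among the boundary coordinates. Then $z \in \overline{H_2^{(i)}}$, so by hypothesis~(2), $z$ is not a critical point of $g|_{\overline{H_2^{(i)}}}$. The tangent directions at $z$ to $\overline{H_2^{(i)}}$ are $y_i$ together with all $x$-directions; since $x = 0$ is a critical point of $g(0, \cdot)$, this forces $\partial_{y_i} g(0, 0) \neq 0$, hence $\partial_{y_i} h(0) \neq 0$. Hadamard's lemma gives $h(y) - h(0) = \sum_i a_i(y) y_i$ with $a_i(0) = \partial_{y_i} h(0) \neq 0$, so $a_i$ has constant sign in a neighbourhood of $0$. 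Setting $\tilde y_i := |a_i(y)| y_i$ defines a local diffeomorphism on $[0, \infty)^{j(z)}$ (Jacobian $\mathrm{diag}(|a_i(0)|)$ at the origin) that preserves the half-space structure, and a direct computation gives $h(y) = g(z) + \sum_i (-1)^{\sigma(i)} \tilde y_i$ with $\sigma(i) = 0$ or $1$ according to the sign of $a_i(0)$. Relabeling coordinates to put the $x$-variables first yields the stated normal form.

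The main obstacle is the parametric version of the Morse lemma in step two, which requires a smooth-in-parameters diagonalization of a non-degenerate Hessian. This is classical (going back to the standard proof via iterated completion of squares combined with a smooth choice of orthogonal frame) but requires care to keep the change of variables smooth jointly in $(y, x)$ while acting trivially on the $y$-directions, so that the boundary stratification of $[0,\infty)^{j(z)} \times \R^{n-j(z)}$ is not disturbed.
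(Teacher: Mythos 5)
The paper does not actually prove this lemma: it imports it from Handron \cite[Lemma~5]{handron2002generalized}, adding only the remark that the cited version fixes the signs of the linear terms under an extra assumption on the components of $\nabla g$. Your self-contained proof is therefore a genuinely different route, and it is correct: straightening the corner, running the parametric Morse lemma with the boundary coordinates $y$ as parameters (so the $x$-change of variables is trivial on $y$ and preserves the quadrant), and then linearising $h(y)=g(y,w(y))$ via Hadamard's lemma is the standard argument, and it has the advantage of producing directly the signed linear terms $(-1)^{\sigma(i)}\tilde y_i$ appearing in the paper's statement, rather than Handron's fixed-sign version. Two points are worth a sentence each. First, to apply the implicit function theorem and Hadamard's lemma you should note that $g$, a priori defined only on $[0,\infty)^{j(z)}\times\R^{n-j(z)}$, extends to a $C^2$ function on an open neighbourhood of the origin (or argue with one-sided derivatives on the convex quadrant); and since $g$ is only assumed $C^2$, the chart you construct has limited regularity --- harmless for the paper, where $g=F/p$ is $C^3$. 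Second, your key step $\partial_{y_i}g(z)\neq 0$ for \emph{every} $i$ applies condition (2) separately to each local face $\overline{H_2^{(i)}}$; under the paper's convention that a stratum is the whole level set $\{j(\cdot)=j\}$, the closure $\overline{H_2}$ near a corner is the \emph{union} of these faces, and ``not a critical point of $g|_{\overline{H_2}}$'' must be read as non-criticality on each face --- the weaker reading (differential nonvanishing on the union) would make the lemma false, e.g.\ for $g(y_1,y_2)=y_1+y_2^2$ on $[0,\infty)^2$. This per-face reading is exactly what Assumption~\ref{a:Morse}(2) supplies in the only case the paper uses ($n=2$, $\mathcal{D}$ a rectangle), so your interpretation is the intended one and the proof stands.
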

We note that \cite[Lemma~5]{handron2002generalized} is stated slightly differently (with a fixed sign in front of the terms $x_{n-j(z)+1},\dots,x_n$) because it is stated under a certain assumption on the signs of components of $\nabla g$.

We also comment that we only use this lemma in the case $n=2$, however for notational convenience we will split the local variables $x_i$ into three groups as above. The convention of course, will be that some of these $x_i$ are identically zero.

If $F$ and $p$ satisfy Assumption~\ref{a:Morse}, then for any $x\in\overline{\mathcal{D}}$ such that $p(x)\neq 0$, there is a neighbourhood of $x$ on which $F/p$ is the restriction of a Morse function on $\overline{\mathcal{D}}$. This follows easily from considering the gradient and Hessian of $F/p$.

\begin{proposition}\label{t:Morse single level}
Let $\mathcal{D}\subset\R^2$ be an open rectangle and let $(F,p)$ satisfy Assumption~\ref{a:Morse}. Suppose that $(F,p)$ has a quasi-critical point at level $\beta$, then for $\delta>0$ sufficiently small, the number of connected components of $A_{\beta+\delta}$ which do not intersect $\partial\mathcal{D}$ differs from the corresponding number for $A_{\beta-\delta}$ by at most one (and the corresponding number for $A_\beta$ is between these two).
%\footnote{M: I have added this part since it allows us to apply the proposition when $\alpha=0,1$. This means we don't have to give an extra argument earlier to show that our fields have no quasi-critical points at the fixed levels $0,1$.} 
The same conclusion holds for the level sets $\{F-\alpha p=0\}\cap\overline{\mathcal{D}}$ for $\alpha\in\{\beta-\delta,\beta+\delta,\beta\}$.
\end{proposition}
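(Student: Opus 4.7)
The plan is to reduce to a local analysis near the unique quasi-critical point $z$ at level $\beta$ (unique by Assumption~\ref{a:Morse}(1)), and then read off the topological change from a stratified Morse normal form. First I would choose $\delta>0$ small enough that $z$ is the only quasi-critical point of $(F,p)$ whose level lies in $[\beta-\delta,\beta+\delta]$, and pick a compact neighbourhood $U$ of $z$ with $U\cap\{p=0\}=\emptyset$. On $\overline{\mathcal{D}}\setminus U$ there are no quasi-critical points at levels in $[\beta-\delta,\beta+\delta]$, so the gradient-type flow~\eqref{e:flow} used in the proof of Proposition~\ref{prop:Morse} applies verbatim: it continuously deforms $A_\alpha\setminus U$ as $\alpha$ ranges over this interval and preserves the topology of this set together with the way its closure meets $\partial U$ and $\partial\mathcal{D}$. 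Hence all changes in the interior component count between $A_{\beta-\delta}$ and $A_{\beta+\delta}$ are controlled by the local picture inside $U$.

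Shrinking $U$ if necessary, $p$ has constant sign on $U$. Assuming $p>0$ there, one has $\{F-\alpha p\ge 0\}\cap U=\{h\ge\alpha\}\cap U$ with $h:=F/p$, and the case $p<0$ is identical with the inequality reversed. A direct computation using $p(z)\neq 0$ shows that the quasi-critical points of $(F,p)$ in $U$ correspond exactly to the stratified critical points of $h$, and the non-degeneracy part of Assumption~\ref{a:Morse} makes $h$ a Morse function in the sense of Lemma~\ref{l:Morse lemma}. Applying that lemma produces local coordinates $(x_1,x_2)$ on $U\cap\overline{\mathcal{D}}$ in which $h-\beta$ takes a standard quadratic-plus-linear form depending on the stratum containing $z$.

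The substance of the argument is then to go through each normal form and track how $\{h\ge\alpha\}\cap U$, together with its intersection with $\partial U\cup(\partial\mathcal{D}\cap U)$, varies for $\alpha\in[\beta-\delta,\beta+\delta]$. When $z\in\mathcal{D}$ the normal form $\pm x_1^2\pm x_2^2$ yields the classical planar Morse picture: a new component is born or killed at a local extremum, while at a saddle two components merge, split, or a hole opens/closes, so the component count changes by at most one. When $z\in\mathcal{E}$, the normal form $\pm x_1^2+\varepsilon x_2$ describes a tangency of $\{h=\alpha\}$ to $\partial\mathcal{D}$: on one side of $\beta$ the level line is disjoint from the boundary near $z$, on the other it meets it transversally at two points, and this detaches a single local piece from the boundary (or vice versa). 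When $z\in\mathcal{C}$, the linear normal form $\varepsilon_1 x_1+\varepsilon_2 x_2$ just sweeps the level line through the corner, affecting at most one component. Matching these local changes to the boundary-intersection data preserved by the flow outside $U$ yields the desired bound of one on the change in the number of components of $A_\alpha$ not meeting $\partial\mathcal{D}$, and the intermediate value at $\alpha=\beta$ follows by taking a one-sided limit. The level set statement follows from the same analysis, since in two dimensions level curves can only split, merge or wink in and out at the critical points identified by the Morse lemma.

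The main obstacle will be the bookkeeping at edge and corner points, where a single local event can simultaneously convert a boundary-touching component into an interior one (or conversely). One must verify in each normal-form case that the net change in the \emph{interior} component count remains bounded by one, which ultimately rests on the transversality clauses in Assumption~\ref{a:Morse}(2) guaranteeing that $\{F-\alpha p=0\}$ crosses $\partial\mathcal{D}$ away from the corners (and is not parallel to either edge at a corner). A secondary subtlety is ensuring that the flow outside $U$ and the Morse model inside $U$ can be glued without introducing spurious components near $\partial U$; for sufficiently small $U$ this follows from the uniform transversality between the level lines and $\partial U$ provided by Lemma~\ref{l:Morse lemma}.
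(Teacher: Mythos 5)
Your proposal is correct and follows essentially the same route as the paper's proof: localise the change to a small neighbourhood $U$ of the unique quasi-critical point via the flow \eqref{e:flow} from Proposition~\ref{prop:Morse}, pass to $h = F/p$ on $U$, apply the stratified Morse lemma (Lemma~\ref{l:Morse lemma}), and run the case analysis over the three strata. The obstacles you flag (edge/corner bookkeeping and gluing the local model to the exterior flow) are exactly the points the paper handles by its careful choice of the nested neighbourhoods $B \subset U$ and the quadrilateral retraction argument.
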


This result is almost a corollary of  \cite[Theorem~8]{handron2002generalized} which gives a CW-decomposition of excursion sets, but our setting is slightly different, so their result can not be applied as stated in our situation. It is possible to modify the proof of \cite[Theorem~8]{handron2002generalized} in order to prove Proposition~\ref{t:Morse single level}. We do not follow this road, instead we give a different proof based on Lemma \ref{l:Morse lemma} and the flow argument used in Proposition \ref{prop:Morse}.

\begin{proof}
By Proposition~\ref{prop:Morse}, it is enough to prove the result for some $\delta>0$ such that there are no quasi-critical points with level in $[\beta-\delta,\beta+\delta]\backslash\{\beta\}$. Let $z$ denote the quasi-critical point of $(F,p)$ at level $\beta$ and choose $\delta>0$ such that this is the only quasi-critical point with level in $[\beta-\delta,\beta+\delta]$ (this is possible because $\overline{\mathcal{D}}$ is compact and quasi-critical points are isolated). Since $p(z)\neq 0$ by Assumption~\ref{a:Morse}, we can find a neighbourhood $U$ of $z$ on which $p\neq 0$, and hence $F/p$ is well defined on this neighbourhood. We will assume $p(z)>0$; if this value was negative our arguments could be repeated with some sign changes. Since
\begin{displaymath}
\nabla(F/p)=\frac{1}{p^2}(p\nabla F-F\nabla p)=\frac{1}{p}(\nabla F-(F/p)\nabla p)
\end{displaymath}
it is clear that $F-\alpha p$ has a quasi-critical point at $x$ where $p(x)\neq 0$ if and only if $x$ is a critical point of $F/p$ restricted to the stratum of $x$ at level $\alpha$. It is also obvious that $F/p-\alpha\ge 0$ if and only if $F-\alpha p\ge 0$, for points at which $p>0$. By Lemma~\ref{l:Morse lemma} we have in local coordinates on some possibly smaller neighbourhood $U$ of $z$
\begin{equation}
\label{e:local coord}
\begin{aligned}
F/p=&\beta-x_1^2-\dots-x_\lambda^2+x_{\lambda+1}^2+\dots+x_{n-j(z)}^2\\
&+(-1)^{\sigma(n-j(z)+1)}x_{n-j(z)+1}+\dots+(-1)^{\sigma(n)}x_n.
\end{aligned}
\end{equation}

By further decreasing $U$, we can assume the following: if there are zero-level lines of $F-\beta p$ emerging from $z$ then $U$ is bounded either by a smooth curve which crosses the zero-level lines of $F-\beta p$ or, if $z\in \partial \mathcal{D}$, by the union of such a curve and a sub-arc of $\partial \mathcal{D}$. If there are no zero-level lines of $F-\beta p$ emerging from $z$, then the zero set of $F-\beta p$ in a $\overline{\mathcal{D}}$-neighbourhood of $z$ is just $\{z\}$. We can then choose $U$ to be such a neighbourhood, and we further assume that the boundary of $U$ is either a circle or the union of an arc of a circle and a sub-arc of $\partial\mathcal{D}$. Let $B$ be an even smaller neighbourhood of $z$ such that $\overline{B}\subset U$.

First we consider the case that there are zero-level lines of $F-\beta p$ emerging from $z$. Arguing in the same way as in the proof of Proposition~\ref{prop:Morse} we can choose $U$ and $B$ in such a way that all components of $\{F-\beta p\ge 0\}\cap (U\setminus B)$ are quadrilaterals such that their boundaries are made of two sub-arcs of $\partial B$ and $\partial U$ and two arcs that are either sub-arcs of $\{F-\beta p=0\}$ or $\partial \mathcal{D}$. The flow \eqref{e:flow} is uniformly bounded in a small neighbourhood of $\{F-\beta p=0\}\setminus B$ and there are are no critical points in this neighbourhood. As before, this implies that there is a sufficiently small $\delta>0$ such that if we run the flow starting from $\alpha=\beta$ and ending inside $[\beta-\delta,\beta+\delta]$ then the curves $\{F-\beta p=0\}\setminus B$ move by less than $\mathrm{dist}(B,\mathcal{D}\setminus U)$. This means that the topology of the excursion sets $A_\alpha$ and nodal sets $\{g_\alpha=0\}$ outside of $U$ does not change (as we pass from $\alpha=\beta-\delta$ to $\alpha=\beta+\delta$) and the number of arcs where the nodal set intersects $\partial U$ does not change as well. This proves that any change in the number of excursion set components which do not intersect $\partial\mathcal{D}$ must happen inside of $U$.

In the case that there are no zero-level lines of $F-\beta p$ emerging from $z$, we know that $F-\beta p$ is bounded away from zero on $U\setminus B$ and there are no critical points on this set. Therefore in this case we can also choose $\delta>0$ such that $F-\alpha p$ is non-zero on $U\setminus B$ for all $\alpha\in[\beta-\delta,\beta+\delta]$. Once again we conclude that any change in the topology of $A_\alpha$ for this range of $\alpha$ must occur inside $U$.

Inside of $U$, we consider the excursion sets $\{F/p\geq\alpha\}$ rather than $A_\alpha=\{F-\alpha p\geq 0\}$ because we can describe the former using the simple coordinates in \eqref{e:local coord}. So depending on the dimension of the stratum containing $z$ and the index of $z$, we have the following options for the \emph{local behaviour} in $U$ when $\alpha$ is close to $\beta$: 

\begin{enumerate}
    \item Dimension $2$ ($z\in\mathcal{D}$): If the index $\lambda$ is $0$ or $2$, then the nodal lines of $F-\alpha p$ do not intersect $\partial U$. As $\alpha$ moves from below $\beta$ to above $\beta$, the component of $\{F/p-\alpha\geq 0\}$ changes from a small disc, to a point $z$, to the empty set or the other way round. If the index is $1$, then the excursion component in $U$ changes from a single component bounded by a hyperbola, to two components separated by a hyperbola (or the other way round).
    \item Dimension $1$ ($z\in\mathcal{E}$): At $\alpha=\beta$ the excursion component is either above a parabola, in which case the topology does not change, or it is the domain between a sub-arc of $\partial\mathcal{D}$ and a parabola. In the latter case, the excursion set in $U$ can change from one component to two components (or the other way round). See Figure~\ref{fig:qc} for an example.
    \item Dimension $0$ ($z\in\mathcal{C}$): In this case, if both $\sigma(1)$ and $\sigma(2)$ are the same, then $\{z\}$ is an isolated component of the nodal set of $F-\beta p$ and it can either disappear or become an interval. So the topology of the excursion set either stays the same or we add one component. If $\sigma(1)\ne \sigma(2)$, then  the nodal line of $F-\beta p$ is a line through $z$. When $\alpha$ changes, this line shifts, but the number of excursion set components does not change.  
\end{enumerate}
\begin{figure}[h]
    \centering
    \includegraphics[width=0.8\textwidth]{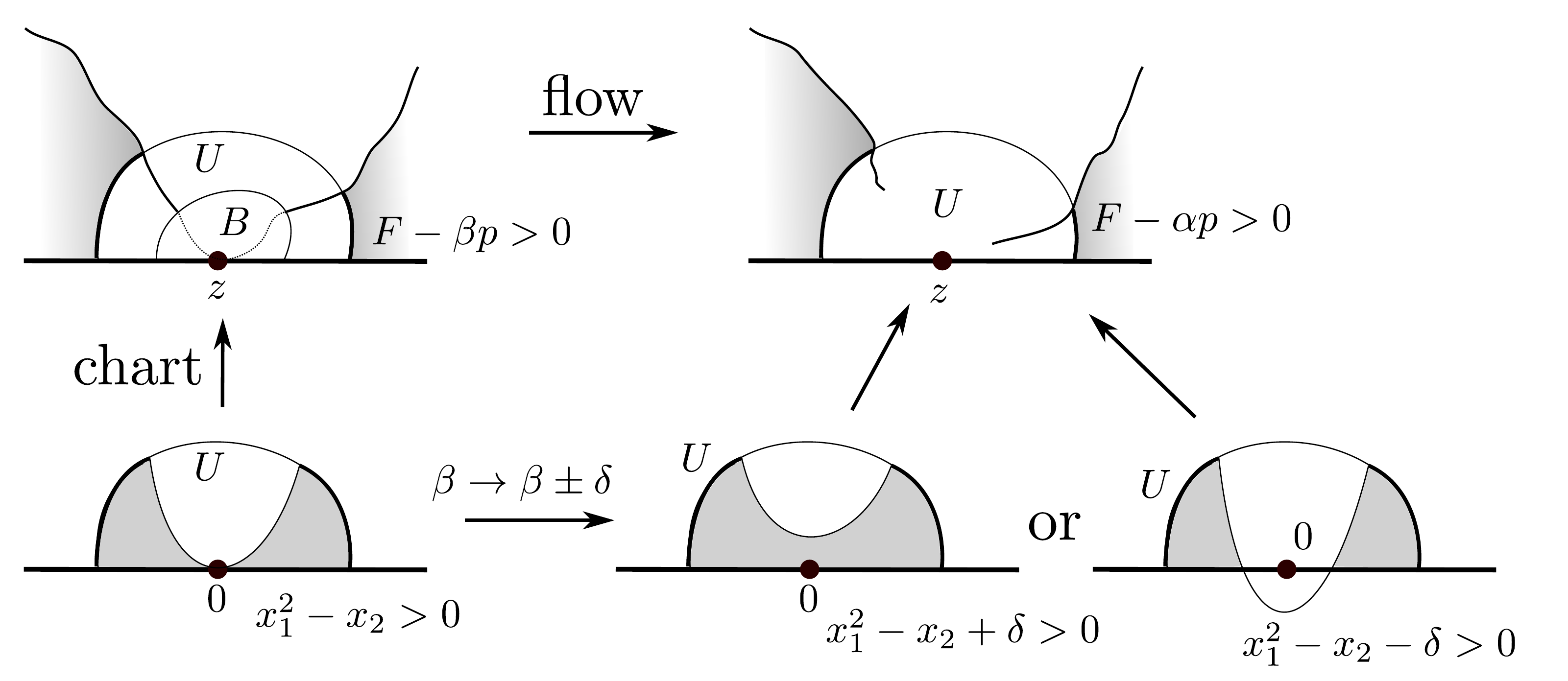}
    \caption{Change of the excursion set near a quasi critical point of index $1$ on a one-dimensional stratum. The analysis of other cases is very similar}
    \label{fig:qc}
\end{figure}
Summing this up, we see that inside $U$ the number of components of $A_\alpha$ which do not intersect $\partial\mathcal{D}$ can not change by more than one, and outside $U$ the number of such domains does not change at all. The arguments above also show that the same is true of the number of nodal components, and so this completes the proof.
\end{proof}
\begin{proof}[Proof of Theorem~\ref{t:Morse}]
This follows immediately from applying Proposition~\ref{t:Morse single level} to each quasi-critical point of $(F,p)$ and Proposition~\ref{prop:Morse} at levels without such points.
\end{proof}

\end{appendix}

\bigskip
\printbibliography

\end{document}